\newcolumntype{C}{>{\centering\arraybackslash}X} 
\newtheorem*{rep@theorem}{\rep@title}
\newcommand{\newreptheorem}[2]{%
\newenvironment{rep#1}[1]{%
 \def\rep@title{#2 \ref{##1}}%
 \begin{rep@theorem}}%
 {\end{rep@theorem}}}
\newtheorem*{rep@cor}{\rep@title}
\newcommand{\newrepcor}[2]{%
\newenvironment{rep#1}[1]{%
 \def\rep@title{#2 \ref{##1}}%
 \begin{rep@cor}}%
 {\end{rep@cor}}}
\newtheorem*{rep@prop}{\rep@title}
\newcommand{\newrepprop}[2]{%
\newenvironment{rep#1}[1]{%
 \def\rep@title{#2 \ref{##1}}%
 \begin{rep@prop}}%
 {\end{rep@prop}}}
\newtheorem{theorem}{Theorem}[section]
\numberwithin{theorem}{section}
\newenvironment{manualtheorem}[1]{%
    \manualtheoreminner
}{\endmanualtheoreminner}
\newtheorem{lemma}[theorem]{Lemma}
\newtheorem{corollary}[theorem]{Corollary}
\newtheorem{proposition}[theorem]{Proposition}
\theoremstyle{definition}
\newtheorem*{definition*}{Definition}
\newtheorem{definition}[theorem]{Definition}
\theoremstyle{remark}
\newtheorem{remark}[theorem]{Remark}
\def\paragraph{\@startsection{paragraph}{4}%
  \z@\z@{-\fontdimen2\font}%
  {\normalfont\bfseries}}
\numberwithin{equation}{section}
\patchcmd{\subsection}{-.5em}{.5em}{}{}
\renewcommand\section{\@startsection{section}{1}%
  \z@{.7\linespacing\@plus\linespacing}{.5\linespacing}%
  {\normalfont\scshape\centering}}
\renewcommand\subsection{\@startsection{subsection}{2}%
  \z@{-.5\linespacing\@plus-.7\linespacing}{.5\linespacing}%
  {\bfseries}}
\renewcommand\subsubsection{\@startsection{subsubsection}{3}%
  \z@{-.5\linespacing\@plus-.7\linespacing}{.5\linespacing}%
  {\itshape}}
\def\l@paragraph{\@tocline{4}{0pt}{1pc}{7pc}{}}
\newcommand{\C}{\mathbb{C}}
\newcommand{\R}{\mathbb{R}}
\newcommand{\Z}{\mathbb{Z}}
\newcommand{\dsone}{\mathds{1}}
\newcommand{\Hyp}{\mathbb{H}^2}
\newcommand{\Teich}{\mathcal{T}}
\newcommand{\Lsl}{\mathfrak{sl}}
\newcommand{\Dlie}{\mathcal{L}}
\newcommand{\Vol}{\mathrm{Vol}}
\newcommand{\vl}{|}
\newcommand{\PSL}{\mathbb{P}\mathrm{SL}}
\newcommand{\Id}{\mathrm{Id}}
\newcommand{\Ree}{\mathcal{R}e}
\renewcommand{\i}{\mathbf{I}}
\newcommand{\g}{\mathbf{g}}
\newcommand{\deft}{\mathcal{B}_0(T^2)}
\newcommand{\defg}{\mathcal{B}(\Sigma_g)}
\newcommand{\defgp}{\mathcal{B}_0(\Sigma_g)}
\newcommand{\RP}{\mathbb{R}\mathbb{P}^2}
\newcommand{\Sg}{\Sigma_g}
\newcommand{\dev}{\mathrm{dev}}
\newcommand{\ome}{\boldsymbol{\omega}}
\newcommand{\sol}{$\frac{1}{3}\log(2\vl c\vl^2)$}
\newcommand{\cubic}{Q^3(\Teich^c(T^2))}
\newcommand{\almost}{\mathcal{J}(\R^2)}
\newcommand{\pick}{D^3(\mathcal{J}(\R^2))}
\newcommand{\normpick}{\vl\vl A\vl\vl_0^2}
\newcommand{\cubichyp}{Q^3(\mathbb{H}^2)}
\newcommand{\dx}{\mathrm{d}x}
\newcommand{\dy}{\mathrm{d}y}
\newcommand{\du}{\mathrm{d}u}
\newcommand{\devu}{\mathrm{d}v}
\DeclarePairedDelimiterX{\scal}[2]{\langle}{\rangle}{#1 \mid #2}
\DeclarePairedDelimiterX{\scall}[2]{\langle}{\rangle}{#1, #2}
\DeclareMathOperator{\Imm}{Im}
\DeclareMathOperator{\Span}{Span}
\DeclareMathOperator{\Hom}{Hom}
\DeclareMathOperator{\End}{End}
\DeclareMathOperator{\Aut}{Aut}
\DeclareMathOperator{\SL}{\mathrm{SL}}
\DeclareMathOperator{\Proj}{\mathrm{Proj}}
\DeclareMathOperator{\Iso}{Iso}
\DeclareMathOperator{\Ad}{Ad}
\begin{document}

\setcounter{secnumdepth}{3}
\setcounter{tocdepth}{2}

\title[Pseudo-K\"ahler Geometry of properly convex \texorpdfstring{$\RP$}{RP2}-structures on \texorpdfstring{$T^2$}{T2}]{Pseudo-K\"ahler Geometry of Properly Convex Projective Structures on the torus}

\author[Nicholas Rungi]{Nicholas Rungi}
\address{NR: Scuola Internazionale Superiore di Studi Avanzati (SISSA), Trieste (TS), Italy.} \email{nrungi@sissa.it} 

\author[Andrea Tamburelli]{Andrea Tamburelli}
\address{AT: Department of Mathematics, Rice University, Houston (TX), USA.} \email{andrea.tamburelli@libero.it}

\date{\today}

\begin{abstract} In this paper we prove the existence of a pseudo-K\"ahler structure on the deformation space $\deft$ of properly convex $\RP$-structures over the torus. In particular, the pseudo-Riemannian metric and the symplectic form are compatible with the complex structure inherited from the identification of $\deft$ with the complement of the zero section of the total space of the bundle of cubic holomorphic differentials over the Teichm\"uller space. We show that the $S^1$-action on $\deft$, given by rotation of the fibers, is Hamiltonian and it preserves both the metric and the symplectic form. Finally, we prove the existence of a moment map for the $\SL(2,\R)$-action over $\deft$.
\end{abstract}

\maketitle

\tableofcontents

\section{Introduction}
The purpose of this paper is to study the geometry of the deformation space of properly convex $\RP$-structures over the torus. This work is inspired by \cite{mazzoli2021parahyperkahler}, where the authors developed the techniques to study the geometry of the deformation space of MGHC anti-de Sitter structures over $\Sg\times\R$, and a similar approach is used in our case.

\subsection{Motivation}
The study of $\R\mathbb{P}^n$-structures on a smooth manifold started in the 30s with the work of Ehresmann. The case $n=2$ arose great interest among the mathematical community, to the point that between the 50s and the 60s the first results were obtained for the classification of (convex) $\RP$-structures on closed surfaces. See for example \cite{kuiper1953convex} and \cite{benzecri1960varietes}. About 30 years later, Goldmann \cite{goldman1990convex} showed that the deformation space of convex $\RP$-structures $\defg$ over a closed surface $\Sg$, with $g\ge 2$, is a cell of real dimension $16g-16$. He proved a more general result for compact surfaces with $k$ boundary components and he also described the geometry of the deformation space of not necessarily convex $\RP$-structures. On the other hand, the space $\defg$ is intimately related to the Teichm\"uller space $\Teich^c(\Sg)$. In fact, using the Klein-Beltrami model for hyperbolic geometry, every hyperbolic structure on $\Sg$ induces a convex $\RP$-structure, giving an embedding of $\Teich^c(\Sg)$ inside $\defg$.\newline Loftin proved in \cite{loftin2001affine}, using the theory of hyperbolic affine spheres, that the space $\defg$ can be endowed with a complex structure coming from the identification with the total space of the holomorphic bundle of cubic differentials over $\Teich^c(\Sg)$. It has been conjectured that this complex structure is compatible with a natural symplectic form defined on $\defg$, which restricts to a multiple of the Weil-Petersson K\"ahler form on $\Teich^c(\Sg)$, see \cite{goldman1990symplectic}. It is not yet known whether these two structures matched together give a Riemannian metric. Recently, it has been shown (\cite{kim2017kahler}) that $\defg$ admits an invariant mapping class group K\"ahler metric, but it is not clear how this is related to the natural complex structure introduced by Loftin.\newline In the case of genus one, the holomorphic vector bundle of $k$-differentials $Q^k(\Teich^c(T^2))$ has complex rank equal to one. Thus, its total space can be identified with $\Hyp\times\C$, where here $\Hyp$ has to be interpreted as a copy of $\Teich^c(T^2)$. Recently, it has been shown (\cite{trautwein2018infinite}) that the total space of the disc bundle inside $Q^k(\Teich^c(T^2))$ admits a family of symplectic forms parametrized by smooth functions $f:[0,1)\to[0,1)$, which have nice geometric descriptions if the functions are chosen carefully. This result extends the Donaldson's construction of the Feix-Kaledin hyperK\"ahler metric (the case $k=2$ and $f(t)=1-\sqrt{1-t}$) defined on a neighborhood of the zero section of $T^*\Hyp$ (\cite{donaldson2003moment}), where $T^*\Hyp$ is identified with the holomorphic bundle of quadratic differentials over $\Teich^c(T^2)$. 
\newline The aim of this paper is to show that, in the case $k=3$, as long as the condition of being positive-definite on the metric is relaxed, it is possible to find a family of symplectic forms $\ome_f$ and pseudo-Riemannian metrics $\g_f$, depending on a choice of a function $f:[0,\infty)\to(-\infty,0]$ with some properties, on the whole $Q^3(\Teich^c(T^2))$. In particular, they are compatible with the natural complex structure on $Q^3(\Teich^c(T^2))$, thus giving a family of pseudo-K\"ahler structures to $Q^3(\Teich^c(T^2))$. We will see how the pseudo-K\"ahler structure on the space of properly convex $\RP$-structures over the torus will be induced from the one on $\cubic$, 
and we will give a geometric description of this process in terms of hyperbolic affine spheres.\newline We now introduce the fundamental notions and we state our main theorems.

\subsection{Deformation space of convex \texorpdfstring{$\RP$}{RP2}-structures}
Here we give the standard definitions of \emph{convex} $\RP$-\emph{structures} over a smooth surface $S$ and we introduce the deformation space of such structures. Most of the notions presented here can be found in \cite{goldman1990convex}, \cite{choi1997classification} and \cite{goldman2018geometric}.\newline An $\RP$-\emph{structure} on a smooth connected surface $S$ is a maximal $\RP$-atlas, namely an atlas in which the local charts take value in the real projective plane and the transition functions restrict to projective transformations on each connected component of the subset where defined. Once a maximal $\RP$-atlas is given, we say that $S$ has an $\RP$-\emph{structure}. By unravelling the definition it is easy to see that if $S$ is an $\RP$-surface and $p:\widetilde{S}\to S$ is its universal cover, then $\widetilde{S}$ inherits an $\RP$-structure from the one of $S$.\newline A domain (open and connected) $\Omega\subset\RP$ is said to be \emph{convex} if there exists a projective line $l$ disjoint from $\Omega$ such that $\Omega\subset\RP\setminus l\cong\mathbb{A}^2$ is convex in the usual sense. By definition $\R^2$ is convex but $\RP$ is not. It is not difficult to show that this notion of convexity in the real projective plane does not add new convex sets with respect to those usual in affine spaces, see \cite[\S 1]{andersson2004complex} .
\begin{definition}
An $\RP$-surface $S$ is \emph{convex} if it is projectively isomorphic to a quotient $\Omega/_{\Gamma}$, where $\Omega\subset\RP$ is a convex domain and $\Gamma\subset\Proj(\Omega)\subset\SL(3,\R)$ is a discrete group of projective transformations preserving $\Omega$ acting freely and properly discontinuously on $\Omega$. The surface $S$ is $\emph{properly convex}$ if $\Omega$ is bounded in some affine space.
\end{definition}

There is a well-known equivalent way of defining a convex $\RP$-surface in terms of the existence of a pair of maps with special properties. This is the following:
\begin{theorem}[Development Theorem]
Let $S$ be an $\RP$-surface, then the following are equivalent:
\begin{itemize}
    \item[(1)] $S$ is convex
    \item[(2)] There exists a pair $(\dev, h)$, where $\dev:\widetilde{S}\to\RP$ is a diffeomorphism onto a convex domain in $\RP$ called the developing map and $h:\pi_1(S)\to\SL(3,\R)$ is a group homomorphism called the holonomy homomorphism, such that the following diagram commutes:\begin{equation}
\begin{tikzcd}
\widetilde{S} \arrow[r, "\mathrm{dev}"] \arrow[d, "\gamma"'] & \mathbb{R}\mathbb{P}^2 \arrow[d, "h(\gamma)"] \\
\widetilde{S} \arrow[r, "\mathrm{dev}"]                         & \mathbb{R}\mathbb{P}^2                       
\end{tikzcd}
\end{equation}
Moreover, if $(\widetilde{dev},\widetilde{h})$ is another such pair, then $\exists g\in\SL(3,\R)$ such that:\begin{equation*}
    \widetilde{\dev}=g\circ\dev, \qquad \widetilde{h}(\gamma)=g\circ h(\gamma)\circ g^{-1}, \ \forall\gamma\in\pi_1(S) \ . 
\end{equation*}
\end{itemize}
\end{theorem}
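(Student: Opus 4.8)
The plan is to deduce the statement from the classical Ehresmann--Thurston development theorem for $(G,X)$-structures, applied with $X=\RP$ and $G=\SL(3,\R)$ acting on $\RP$, and then to upgrade its generic conclusion (that $\dev$ is an equivariant \emph{local} diffeomorphism) to the global statement using convexity. Recall that the general theory provides, for \emph{any} $\RP$-structure on $S$, a developing map $\dev\colon\widetilde{S}\to\RP$ obtained by analytically continuing a fixed germ of a chart along paths in the simply connected cover $\widetilde{S}$: simple connectedness guarantees this is well defined and yields a local diffeomorphism, while comparing the continuations along $\gamma\cdot\widetilde{p}$ and $\widetilde{p}$ produces the holonomy $h\colon\pi_1(S)\to\SL(3,\R)$ making the equivariance square commute. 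The same construction shows that changing the initial germ replaces $(\dev,h)$ by $(g\circ\dev,\, g\,h(\cdot)\,g^{-1})$ for a unique $g\in\SL(3,\R)$, which is exactly the asserted uniqueness clause. Hence the only content specific to the convex case is the equivalence (1)$\iff$(2), and I would establish each implication as a transfer of properties across the equivariant map $\dev$.

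For (1)$\Rightarrow$(2): assume $S$ is projectively isomorphic to $\Omega/_{\Gamma}$ with $\Omega\subset\RP$ a convex domain. First I would observe that a convex domain, being convex in an affine chart, is contractible and hence simply connected; since $\Gamma$ acts freely and properly discontinuously, the quotient map $\Omega\to\Omega/_{\Gamma}\cong S$ is a universal covering, so I may identify $\widetilde{S}\cong\Omega$. Under this identification the inclusion $\Omega\hookrightarrow\RP$ is a developing map --- it is projective and a diffeomorphism onto the convex domain $\Omega$ --- with holonomy the inclusion $\Gamma\hookrightarrow\SL(3,\R)$, and the square commutes tautologically. By the uniqueness clause every developing map is of the form $g\circ(\text{inclusion})$, hence still a diffeomorphism onto the convex domain $g\cdot\Omega$, giving (2).

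For (2)$\Rightarrow$(1): suppose $\dev$ is a diffeomorphism onto a convex domain $\Omega$. The equivariance $\dev\circ\gamma=h(\gamma)\circ\dev$ shows that each $h(\gamma)$ preserves $\Omega=\dev(\widetilde{S})$, so $\dev$ conjugates the deck action of $\pi_1(S)$ on $\widetilde{S}$ to the action of $\Gamma\defin h(\pi_1(S))$ on $\Omega$. Since the deck action is free and properly discontinuous and $\dev$ is an equivariant homeomorphism, the $\Gamma$-action on $\Omega$ inherits these properties, and injectivity of $h$ is immediate from injectivity of $\dev$. I would then verify discreteness of $\Gamma$ in $\SL(3,\R)$: if $h(\gamma_n)\to\Id$ with the $\gamma_n$ distinct, then for a small compact neighborhood $K$ of a point of $\Omega$ one has $h(\gamma_n)K\cap K\neq\emptyset$ for large $n$, contradicting proper discontinuity. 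Finally $\dev$ descends to a projective isomorphism $\Omega/_{\Gamma}\cong\widetilde{S}/\pi_1(S)=S$, so $S$ is convex.

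I expect the genuine technical weight to sit entirely inside the general development theorem invoked at the start --- the well-definedness of the analytic continuation and the uniqueness of $(\dev,h)$ up to $\SL(3,\R)$ --- rather than in the convexity equivalence, which is a formal transfer of group-action properties through the equivariant diffeomorphism $\dev$. Within that equivalence, the one point requiring real care (as opposed to bookkeeping) is showing that proper discontinuity of the $\Gamma$-action forces $\Gamma$ to be \emph{discrete} in $\SL(3,\R)$, so that $\Omega/_{\Gamma}$ is a bona fide convex $\RP$-surface in the sense of the preceding definition.
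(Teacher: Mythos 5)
The paper does not prove this statement at all: it is presented as background (the classical Development Theorem for convex $\RP$-surfaces), with the reader referred to \cite{goldman1990convex}, \cite{choi1997classification} and \cite{goldman2018geometric}. Your proposal is a correct reconstruction of the standard argument those references give --- invoke the Ehresmann--Thurston development theory for $(G,X)$-structures with $G=\SL(3,\R)$, $X=\RP$, then transfer freeness, proper discontinuity and discreteness across the equivariant diffeomorphism $\dev$ --- so there is nothing to compare it against within the paper, and no gap to report; the one caveat is that your uniqueness clause should be quoted in its full form (any two developing pairs for the same structure differ by a global $g\in\SL(3,\R)$, proved by noting that the locally defined comparison element of $\SL(3,\R)$ is locally constant, hence constant), not merely for pairs arising from different initial germs.
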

It is clear from the statement that if $S$ is convex, then its universal cover $\widetilde{S}$ can be identified with a convex domain $\Omega\subset\RP$ via the developing map and the discrete subgroup $\Gamma$ can be identified with $\pi_1(S)$ via the holonomy homomorphism. From this point on we will focus only on the case in which the surface is closed and orientable, hence it will be denoted with $\Sg$. \begin{definition}\label{defproperlyconvexstructure}Let $\Sg$ be a smooth, closed and orientable surface. A \emph{(properly) convex} $\RP$-\emph{structure} on $\Sg$ is a pair $(f,M)$, where $f:\Sg\to M$ is a diffeomorphism (called the \emph{marking}) and $M\cong \Omega/_{\Gamma}$ is a (properly) convex $\RP$-surface. 
\end{definition}One can define an equivalence relation on such pairs, namely $(f_1,M_1)\sim (f_2,M_2)$ if and only if there exists a projective isomorphism $\Psi:M_1\to M_2$ such that the new marking $\Psi\circ f_1$ is isotopic to $f_2$. Now we are ready to introduce the main space that we are going to study in this article: \emph{the deformation space of (properly) convex $\RP$-structures}
\begin{align}
    &\defg:=\{(f,M) \ \text{convex} \ \RP-\text{structure on} \ \Sg\}_{/_\sim} \\ &\defgp:=\{(f,M) \ \text{properly convex} \ \RP-\text{structure on} \ \Sg\}_{/_\sim}
\end{align}
\begin{remark}
The reader familiar with Teichm\"uller theory may have noticed the remarkable similarity of the construction of this space with $\Teich^c(\Sg)$, when $g\ge 2$. In fact, as stated before, if $\Sg\cong\Hyp/_{\Gamma}$ with $\Gamma\subset\PSL(2,\R)$, using the Klein-Beltrami model for hyperbolic geometry and noting that $\PSL(2,\R)\cong\Iso^+(\Hyp)$ and $\PSL(3,\R)\cong\Aut(\RP)$, we can identify $\Gamma$ with a discrete subgroup of $\PSL(3,\R)$, contained in $\Proj(\Omega)$, acting freely and properly discontinuously on $\Omega\equiv\Hyp$. Thus, giving an inclusion $\Teich^c(\Sg)\subset\defg$ at least as sets.\footnote{We have implicitly used the existence of a unique (up to conjugation) irreducible representation $\imath:\PSL(2,\R)\hookrightarrow\PSL(3,\R)$.}
\end{remark}
The behavior of this space depends highly on the genus of the surface and, as one can imagine, there are notable differences between the flat case (genus one) and the hyperbolic one ($g\ge 2$). The first result in this direction was given by Kuiper and Benzecr\'i (\cite{kuiper1953convex},\cite{benzecri1960varietes}) which we now recall.
\begin{proposition}
If $\Sg$ is a convex $\RP$-surface with $g\ge 2$, then it must be properly convex. Moreover, the boundary $\partial\Omega$ is always strictly convex and $C^1$, and it must be either and ellipse or a Jordan curve which is nowhere $C^2$. In particular, there is an identification $\defg\equiv\defgp$.
\end{proposition}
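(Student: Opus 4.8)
The plan is to establish the four claims in sequence: proper convexity of $\Omega$, strict convexity together with $C^1$ regularity of $\partial\Omega$, the dichotomy between the conic and the nowhere-$C^2$ case, and finally the identification of the two deformation spaces. Throughout I use that, since $\Sg$ is closed, the group $\Gamma\cong\pi_1(\Sg)$ acts cocompactly on $\Omega$.

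For the first claim, suppose for contradiction that $\Omega$ is convex but not properly convex. From the classification of convex domains in $\RP$, a non-properly-convex $\Omega$ contains a full projective line, and one checks that the quotient $\Omega/\Gamma$ then carries a complete affine structure. By Benzécri's theorem \cite{benzecri1960varietes}, a closed surface admitting an affine structure has vanishing Euler characteristic; since $\chi(\Sg)=2-2g<0$ for $g\ge 2$, this is a contradiction, so $\Omega$ must be properly convex. In particular $\partial\Omega$ is a convex Jordan curve.

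For strict convexity and $C^1$ regularity, $\Omega$ is now a \emph{divisible} properly convex domain, i.e.\ it admits a cocompact action by the discrete group $\Gamma$. The central input is Benzécri's compactness theorem, stating that the space of pointed properly convex domains is compact modulo $\PSL(3,\R)$. Combining cocompactness with the word-hyperbolicity of $\pi_1(\Sg)$ for $g\ge 2$, one rules out nontrivial segments in $\partial\Omega$ (a segment would yield a flat in the quotient, contradicting Gromov-hyperbolicity), which gives strict convexity; the $C^1$ regularity of $\partial\Omega$ then follows by projective duality, applying strict convexity to the dual divisible domain $\Omega^*$.

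For the dichotomy, suppose first that $\partial\Omega$ is $C^2$ at some point $x$ with nonvanishing curvature, so that $x$ has a well-defined osculating conic $\mathcal{C}_x$. Since projective maps preserve order of contact, each $\gamma\in\Gamma$ carries $\mathcal{C}_x$ to the osculating conic at $\gamma\cdot x$; using the density of the attracting fixed points of hyperbolic elements of $\Gamma$ in $\partial\Omega$, one propagates this conic data around the whole boundary and concludes that $\partial\Omega$ is itself a conic. After a projective transformation $\Omega$ is then the Klein-Beltrami disc and the structure is the hyperbolic one. If instead no such point exists, $\partial\Omega$ is nowhere $C^2$. Finally, the identification $\defg\equiv\defgp$ is immediate from the first claim: every convex structure on $\Sg$ with $g\ge 2$ is properly convex, so the inclusion $\defgp\hookrightarrow\defg$ is a bijection. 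The main obstacle is the rigidity dichotomy: the delicate point is showing that a \emph{single} point of $C^2$ regularity forces the entire boundary to be a conic, which requires controlling the behaviour of osculating conics under the projective holonomy and exploiting the density of $\Gamma$-fixed points on $\partial\Omega$; the proper-convexity step is likewise nontrivial, resting on Benzécri's constraint on the Euler characteristic of affine surfaces.
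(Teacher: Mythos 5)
First, a point of reference: the paper does not prove this proposition at all — it is recalled as a classical theorem of Kuiper and Benz\'ecri, with citations, and the zero-section discussion that follows uses only its statement. So your attempt can only be measured against the classical arguments. Your first step (classify non-properly-convex domains, then apply an Euler-characteristic obstruction), your second step (strict convexity from Gromov-hyperbolicity of $\pi_1(\Sg)$, then $C^1$ regularity of $\partial\Omega$ by applying strict convexity to the dual divisible domain), and the final identification $\defg\equiv\defgp$ are essentially the standard route, modulo repairable slips: $\Omega$ cannot contain a full \emph{projective} line, since it sits in an affine chart — the correct statement is that it contains a complete \emph{affine} line and is therefore a plane, a half-plane or a strip; the resulting quotient structure need not be \emph{complete}, and for the half-plane and the strip the projective automorphism group contains non-affine elements, so one must pass to a finite-index subgroup (or use that these automorphism groups are virtually solvable, while $\pi_1(\Sg)$ is not) before invoking Benz\'ecri's theorem that closed affine surfaces have $\chi=0$.

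The genuine gap is in the dichotomy step, which is the heart of the statement. A point $x$ where $\partial\Omega$ is $C^2$ with nonvanishing curvature does \emph{not} determine an osculating conic: conics in $\RP$ form a five-parameter family, so the osculating conic requires fourth-order contact, i.e.\ $C^4$ data; a $2$-jet only determines an osculating circle, which is not projectively natural. Indeed, the conics having prescribed second-order contact at $x$ form a two-parameter family on which the stabilizer of the $2$-jet in $\PSL(3,\R)$ acts transitively, so there is no $\Gamma$-equivariant choice $\mathcal{C}_x$ to transport; and even granting such a choice on a dense orbit, nothing in your argument forces the conics at different points to agree, let alone to contain $\partial\Omega$. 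The classical mechanism is different, and you actually hold the needed tool (Benz\'ecri's compactness theorem) but deploy it only for strict convexity: at a point of second-order differentiability with positive-definite second-order term, affine blow-ups of $(\Omega,x)$ converge to the interior of a parabola, hence to an ellipse projectively; compactness plus cocompactness of $\Gamma$ imply that the $\PSL(3,\R)$-orbit of a divisible domain is \emph{closed} in the space of properly convex domains, so the limiting ellipse lies in the orbit and $\Omega$ itself is an ellipse. Moreover your case analysis does not exhaust the possibilities: the negation of ``some point is $C^2$ with nonvanishing curvature'' is not ``nowhere $C^2$'' — you must also exclude a $C^2$ point with \emph{vanishing} curvature, which strict convexity does not forbid (compare $y=x^4$ at the origin); there the same blow-up produces a triangle, whose dividing groups are virtually $\Z^2$, contradicting $g\ge 2$. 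Without these two repairs the dichotomy, and hence the proposition, is not established.
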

In the case of the torus this is no longer true, for instance there are many convex $\RP$-structures which are not properly convex: affine and Euclidean ones. They can not be properly convex since the developing map identifies the universal cover of $T^2$ with a copy of $\mathbb{R}^2$ inside $\RP$, which is convex but not bounded, see \cite[\S 8.5]{goldman2018geometric}.  \newline The most important results about this space, when $g\ge 2$, are summarized in these two theorems:
\begin{theorem}[Goldman \cite{goldman1990convex} and Goldman-Choi \cite{choi1993convex}]
The deformation space $\defg$ is a smooth manifold diffeomorphic to $\R^{16g-16}$. Moreover, it is isomorphic to one of the three connected components of the $\SL(3,\R)$-character variety, called the Hitchin component.
\end{theorem}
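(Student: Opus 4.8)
The plan is to analyze $\defg$ through its \emph{holonomy map} and to identify it with a connected component of the $\SL(3,\R)$-character variety. By the Development Theorem, each class $(f,M)\in\defg$ with $M\cong\Omega/_\Gamma$ and developing pair $(\dev,h)$ determines a conjugacy class of its holonomy $h\colon\pi_1(\Sg)\to\SL(3,\R)$, so there is a well-defined map
\[
\hol\colon\defg\longrightarrow\X(\Sg)\defin\Hom(\pi_1(\Sg),\SL(3,\R))/\SL(3,\R).
\]
The whole argument reduces to showing that $\hol$ is a homeomorphism onto a single connected component of the smooth locus of $\X(\Sg)$, and that this component is diffeomorphic to $\R^{16g-16}$.

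First I would invoke the Ehresmann--Thurston holonomy principle: since $\Sg$ is compact, the holonomy map on the deformation space of \emph{all} $\RP$-structures (for the $(G,X)$-geometry $G=\SL(3,\R)\cong\Aut(\RP)$, $X=\RP$) is a local homeomorphism onto $\X(\Sg)$. I would then show that proper convexity is both an \emph{open} and a \emph{closed} condition on the image. Openness is Koszul's theorem: a small deformation of the holonomy of a properly convex structure still divides a properly convex domain, which varies continuously. Closedness is the technical core and is the content of Choi--Goldman \cite{choi1993convex}: one must control limits of a convergent sequence of properly convex holonomies and rule out degeneration of the invariant domain or loss of irreducibility, using the rigidity of $\partial\Omega$ recalled in the Kuiper--Benzecr\'i proposition and properness of the $\Gamma$-action. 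Consequently $\hol(\defg)$ is a union of connected components of the irreducible locus of $\X(\Sg)$. Injectivity of $\hol$ follows because a properly convex structure is reconstructed from its holonomy: the image group $\Gamma=h(\pi_1(\Sg))$ preserves a \emph{unique} properly convex domain $\Omega$ and the marking is recovered up to isotopy, so conjugate holonomies yield equivalent structures; hence $\hol$ is a homeomorphism onto its image.

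Next I would pin down the local structure and the dimension. At a properly convex holonomy the representation $h$ is irreducible (indeed Anosov), so its centralizer is finite and $H^0(\pi_1(\Sg),\Ad h)=0$; standard deformation theory of surface-group representations then shows that $\X(\Sg)$ is smooth near $[h]$ with
\[
\dim_\R\X(\Sg)=\dim H^1(\pi_1(\Sg),\Ad h)=-\chi(\Sg)\dim\SL(3,\R)+2\dim H^0=(2g-2)\cdot 8=16g-16,
\]
the middle equality being the Euler-characteristic computation for the cohomology of $\pi_1(\Sg)$ together with Poincar\'e duality. To obtain that the component is diffeomorphic to $\R^{16g-16}$ I would use the parametrization of the Hitchin component by the bundle of holomorphic quadratic and cubic differentials over Teichm\"uller space (Hitchin's Higgs-bundle construction, or equivalently the hyperbolic affine sphere description of Labourie and Loftin \cite{loftin2001affine}): after fixing a conformal structure this is a complex vector space of complex dimension $(3g-3)+(5g-5)=8g-8$, hence diffeomorphic to $\R^{16g-16}$, and it retracts onto its zero section, the Fuchsian (Klein--Beltrami) locus $\imath_*\Teich^c(\Sg)$.

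Finally, the identification with \emph{one of three} connected components follows from Hitchin's count: for $\SL(3,\R)$ (the case $n=3$, odd) the representation variety of $\pi_1(\Sg)$ has exactly three connected components, and the Fuchsian locus $\imath_*\Teich^c(\Sg)$ lies in a single distinguished one, the Hitchin component; by the openness--closedness argument this component coincides with $\hol(\defg)$, which finishes the proof. I expect the main obstacle to be exactly the closedness of proper convexity, equivalently the properness of $\hol$ onto the Hitchin component: ruling out that a convergent sequence of convex holonomies limits to a representation whose invariant domain degenerates or becomes reducible requires the delicate analysis of limits of properly convex domains in \cite{choi1993convex}, and, on the other side, Hitchin's theorem that the component is a topological ball cannot be circumvented by elementary means.
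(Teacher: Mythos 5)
The paper never proves this statement: it is quoted as background, with the proof attributed to Goldman \cite{goldman1990convex} and Choi--Goldman \cite{choi1993convex}, so there is no internal argument to compare yours against. Measured against the literature it invokes, your outline reproduces the standard architecture correctly: Ehresmann--Thurston makes the holonomy map a local homeomorphism, Koszul's theorem gives openness of the properly convex locus, closedness is exactly the theorem of \cite{choi1993convex}, irreducibility of the holonomy plus the Euler-characteristic/Poincar\'e-duality computation gives smoothness and the count $(2g-2)\cdot 8=16g-16$, and injectivity comes from recovering the structure from the invariant domain. You also correctly identify which steps are the deep ones and cannot be circumvented; what you have is an assembly of cited theorems rather than a self-contained proof, which for a result of this depth is unavoidable.

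Two points would need repair before the assembly is logically complete. First, a genuine gap at the end: openness and closedness show that the holonomy image is a \emph{union} of connected components, and this gives the inclusion of the Hitchin component (the component containing the Fuchsian locus) \emph{into} the image; to get the reverse inclusion, i.e.\ that the image is that single component, you need to know that $\defg$ itself is connected. That is not free --- it is precisely Goldman's cell theorem, which he proves by a direct Fenchel--Nielsen-type parametrization of convex structures adapted to a pants decomposition, a route your sketch never mentions; alternatively, connectedness follows from Loftin's parametrization \cite{loftin2001affine} of $\defg$ (not of the Hitchin component, or the argument becomes circular), which is also how the present paper views the cell structure. Second, a smaller slip: you conflate Hitchin's parametrization, which fixes \emph{one} conformal structure and identifies the component with the vector space $H^0(K^2)\oplus H^0(K^3)$ of complex dimension $(3g-3)+(5g-5)=8g-8$, with the Labourie--Loftin parametrization, which is the bundle of \emph{cubic} differentials only over all of $\Teich^c(\Sg)$ (fiber dimension $5g-5$, base dimension $3g-3$). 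Both yield $\R^{16g-16}$, but they are different constructions, and only in the second one is the Fuchsian locus the zero section.
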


\begin{theorem}[Loftin \cite{loftin2001affine}]
The data of a compact Riemann surface $(\Sg, J)$, with $g\ge 2$, together with a holomorphic section of the tri-canonical bundle $K^3$ is equivalent to the data of a smooth, oriented closed surface $S$ homeomorphic to $\Sg$ with a convex $\RP$-structure.
\end{theorem}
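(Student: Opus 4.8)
The plan is to realize the correspondence geometrically through the theory of hyperbolic affine spheres, following Calabi, Cheng--Yau and C.~P.~Wang.

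\textbf{From a convex structure to a cubic differential.} Starting with a convex $\RP$-structure on $S\cong\Omega/\Gamma$ (which is automatically properly convex since $g\ge 2$), I would lift to the universal cover, so that $\widetilde{S}$ is identified with a properly convex bounded domain $\Omega\subset\RP$ preserved by $\Gamma\cong\pi_1(\Sg)$, and pass to the convex cone $\mathcal{C}\subset\R^3$ lying over $\Omega$. The central analytic input is the theorem of Cheng--Yau: every properly convex cone carries a unique hyperbolic affine sphere $\mathcal{H}\subset\mathcal{C}$ asymptotic to $\partial\mathcal{C}$, with affine mean curvature normalized to $-1$. By uniqueness, $\mathcal{H}$ is invariant under the linear $\Gamma$-action, hence the affine structure it inherits descends to $S$. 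On $\mathcal{H}$ the Blaschke metric $g$ is a $\Gamma$-invariant Riemannian metric, whose conformal class defines the complex structure $J$ on $\Sg$; the difference between the Blaschke connection and the Levi-Civita connection of $g$ is encoded in the Pick cubic form $C$. I would then verify, using the apolarity condition and the Gauss--Codazzi integrability equations for affine spheres, that the $(3,0)$-part of $C$ is a holomorphic section $q$ of $K^3$ with respect to $J$.

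\textbf{From a cubic differential to a convex structure.} Conversely, given $(\Sg,J)$ and a holomorphic cubic differential $q$, I would reconstruct the affine sphere by solving the structure equations. Fixing a conformal background metric of curvature $\kappa$ and writing the Blaschke metric as $e^{2u}$ times this background, the flatness of the induced connection reduces the entire system to a single semilinear elliptic PDE for $u$, \emph{Wang's equation}, of the schematic form $\Delta u = e^{2u} - \|q\|^2 e^{-4u} - \kappa$, where the norm is taken in the background metric. The key step is existence and uniqueness of a solution on the compact surface $\Sg$: since the right-hand side is strictly increasing in $u$, the method of sub- and super-solutions yields existence, and the maximum principle yields uniqueness. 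Having produced $u$, the structure equations integrate to an immersion of $\widetilde{\Sg}$ as an affine sphere, equivariant under a representation $\hol\colon\pi_1(\Sg)\to\SL(3,\R)$; projectivizing the cone over its image produces the developing map $\dev\colon\widetilde{\Sg}\to\RP$ onto a properly convex domain, and by the Development Theorem this is exactly the data of a convex $\RP$-structure. Uniqueness of the Cheng--Yau affine sphere and of the solution of Wang's equation guarantee that both constructions descend to equivalence classes and are mutually inverse, with equivariance ensuring compatibility with markings.

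\textbf{Main obstacle.} I expect the principal difficulty to be the global analysis on both sides. Establishing the existence, uniqueness, equivariance and boundary asymptotics of the Cheng--Yau hyperbolic affine sphere is a delicate unbounded problem, and controlling the nonlinear term $\|q\|^2 e^{-4u}$ in Wang's equation, which degenerates at the zeros of $q$, is the crux of the a priori estimates required to make the sub/super-solution argument run. Verifying holomorphicity of $q$ and, conversely, the convexity of the reconstructed domain, are the places where the geometry of affine spheres must be matched precisely to the complex-analytic data.
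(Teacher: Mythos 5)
Your proposal follows essentially the same route as the paper's treatment of this correspondence (Section 2, following Wang and Loftin): the Cheng--Yau--Calabi--Nirenberg theorem on hyperbolic affine spheres asymptotic to the cone over $\Omega$ gives the direction from convex $\RP$-structures to pairs $(J,q)$ via the Blaschke metric and the Pick form, while the converse direction reduces to solving Wang's semilinear elliptic equation on the compact surface and integrating the affine structure equations equivariantly to recover the developing map. The only minor differences are your conformal-factor convention (hence the exponents in the PDE) and your use of sub/super-solutions with the maximum principle where the paper invokes Loftin's existence-and-uniqueness lemma proved via elliptic operator theory on Sobolev spaces --- both are standard and valid for this monotone equation.
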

In other words, there exists an homeomorphism \begin{align*}
    \Psi&:\defg\to Q^3(\Teich^c(\Sg)) \\ &[(f,\Omega/_{\Gamma})]\mapsto (J,q)
\end{align*}where $q\in H^0(\Sg,K^3)$. From this, $\defg$ inherits the structure of a holomorphic vector bundle over $\Teich^c(\Sg)$, in particular it inherits a natural complex structure which is invariant under the action of the mapping class group $MCG(\Sg)$. It must be noted that Loftin's theorem recovers Goldman's result about the contractility of the space and that $\Teich^c(\Sg)$ now embeds into $\defg$ as the zero section.

\subsection{Pseudo-K\"ahler geometry}
We now introduce the notion of \emph{pseudo-K\"ahler structure} and we state our main theorem.\newline
Recall that a \emph{pseudo-Riemannian} metric $\g$ on a smooth $n$-manifold $M$ is an everywhere non-degenerate, smooth, symmetric $(0,2)$-tensor. The $\emph{index}$ of $\g$ is the maximal rank $k$ of the smooth distribution where it is negative-definite. For instance, if $k=0$ then $\g$ is a Riemannian metric. Now let $\mathbf{I}$ be a complex structure on $M$, then $(\g,\mathbf{I})$ is a \emph{pseudo-Hermitian structure} if \begin{equation*}
    \g(\mathbf{I}X,\mathbf{I}Y)=\g(X,Y), \qquad\forall X,Y\in T_pM, p\in M
\end{equation*}Notice that, due to this last condition, the index of $\g$ in this case is always even $k=2s$, where $s$ is called the \emph{complex index} and it satisfies $1\le s\le m=\dim_\C M$. The \emph{fundamental 2-form} $\ome$ of a pseudo-Hermitian manifold $(M,\g,\mathbf{I})$ is defined by: \begin{equation*}
    \ome(X,Y):=\g(X,\mathbf{I}Y), \qquad\forall X,Y\in T_pM, p\in M
\end{equation*}
\begin{definition}
A pseudo-Hermitian manifold $(M,\g,\mathbf{I},\ome)$ is called \emph{pseudo-K\"ahler} if the fundamental $2$-form is closed, namely if $\mathrm{d}\ome=0$. In this case the corresponding metric is called \emph{pseudo-K\"ahler}.
\end{definition}
\begin{manualtheorem}A \label{thmA}
Let $f:[0,+\infty)\to(-\infty,0]$ be a smooth function such that:\begin{itemize}
    \item[(i)]$f'(t)<0, \ \forall t\ge0$
    \item[(ii)]$\displaystyle\lim_{t\to+\infty}f(t)=-\infty$ .
\end{itemize}Then the space $\deft$ admits a $MCG(T^2)$-invariant pseudo-K\"ahler structure $(\g_f,\ome_f,\mathbf{I})$. The pseudo-Riemannian metric $\g_f$ and the symplectic form $\ome_f$ both depend on $f$ and $\mathbf{I}$ is the complex structure coming from the identification of $\deft$ with the complement of the zero section in $Q^3(\Teich^c(T^2))$.\end{manualtheorem}
In what follows we will denote the pseudo-Riemannian metric with $\g$ and the symplectic form with $\ome$, since their dependence on $f$ will be clear from the definition (see Definition \ref{def:pseudoriemannian}).
\subsection{The circle action and the \texorpdfstring{$\SL(2,\R)$}{SL(2,R)} moment map}
We now move on to the study of two natural actions on the space $\deft$. Let us consider the complement of the zero section in the bundle of cubic holomorphic differentials: \begin{equation*}
    Q^3_0(\Teich^c(T^2)):=Q^3(\Teich^c(T^2))\setminus\big(\Teich^c(T^2)\times\{0\}\big) \ .
\end{equation*}There is an isomorphism $\chi:\deft\to Q^3_0(\Teich^c(T^2))$ (see Corollary \ref{cor:cubictorusandproperlyconvex}) which allows us to pull the natural circle action on $Q^3(\Teich^c(T^2))$, given by $e^{i\theta}\cdot(J,q)\mapsto (J,e^{-i\theta}q), \ \theta\in\R$, back to $\deft$ (notice that this circle action preserves the zero section). Let $\Psi_\theta:\deft\to\deft$ be the induced $S^1$-action, then we have the following: \begin{manualtheorem}B
The $S^1$-action on $\deft$ is Hamiltonian with respect to $\ome$ and it satisfies \begin{equation}
     \Psi_\theta^{*}\ome=\ome, \qquad \Psi_\theta^{*}\g=\g
\end{equation}\end{manualtheorem}
There is also a natural $\SL(2,\R)$-action on $Q^3(\Teich^c(T^2))$ preserving the zero section, namely if $P\in \SL(2,\R)$ then \begin{align*}
    \Phi_P: & Q^3(\Teich^c(T^2))\to Q^3(\Teich^c(T^2)) \\ &(J,q)\mapsto(PJP^{-1},(P^{-1})^*q)
\end{align*}where the action on $q$ is by pull-back. By abuse of notation we still denote with $\Phi_P$ the $\SL(2,\R)$-action on $\deft$, then we have the following:\begin{manualtheorem}C
The $\SL(2,\R)$-action on $\deft$ is Hamiltonian with respect to $\ome$ and with moment map $\mu:\deft\to\Lsl(2,\R)^*$ given by
\begin{equation}
    \langle \mu(J,q),X \rangle=\bigg(1-f\big(\vl\vl q\vl\vl^2_J\big)\bigg)\tr(JX),\quad X\in\Lsl(2,\R)
\end{equation}where if $q=c \mathrm{d}z^3$ for $c\in\C$, then $\vl\vl q\vl\vl_J^2=\vl c\vl^2e^{-3\phi}$ for a fixed metric $h=e^\phi\vl \mathrm{d}z\vl^2$ compatible with $J$.
\end{manualtheorem}

\section*{Acknowledgments}
This paper is the first part of an ongoing PhD research project of the first author under the supervision of the second author. Currently, the theory is being developed to address the problem in the case of higher genus, and a similar result is expected. 
\section{Preliminaries}
\subsection{Hyperbolic affine spheres and cubic tensors}
Here we briefly introduce the theory of hyperbolic affine spheres in general dimension, recalling also the most important result regarding their global geometry as hypersurfaces of $\R^{n+1}$. Finally, we will focus on the case $n=2$, showing that there is a close relationship between these geometric objects and the cubic holomorphic differentials defined on the surface. We will follow closely the introduction presented in \cite{loftin2013cubic}. For a more specific discussion, see \cite{nomizu1994affine}.

Affine differential geometry is the study of the differential-geometric properties of hypersurfaces in $\R^{n+1}$ which are invariant under unimodular affine transformations, namely those affine maps $T:\R^{n+1}\to\R^{n+1}$ such that $T(x)=Ax+b$, with $\det A=1$ and $b\in\R^{n+1}$. The affine invariants are not obvious at first glance, as the usual notions of length and angle in $\R^{n+1}$ are not valid.  One way of accessing the theory is the so called \emph{affine normal}, which is a transverse vector field to a smooth hypersurface in $\R^{n+1}$. In order to define the affine normal, we first discuss the affine geometry a transverse vector field generates on a hypersurface in $\R^{n+1}$.  Let $f\!: M \to \R^{n+1}$ be an immersion and $\widetilde\xi\!:M\to\R^{n+1}$ a transverse vector field to $f(M)$. This means that for all $x\in M$ we have a splitting: $$T_{f(x)}\R^{n+1} = f_* T_xM + \R\widetilde{\xi}_x \ . $$ 
Now, for any smooth vector fields $X,Y$ on $M$, push them forward by $f_*$ and locally extend $f_*X$, $f_*Y$ and $\widetilde\xi$ to smooth vector fields on $\R^{n+1}$. Then, if $D$ is the standard (flat) connection on $\R^{n+1}$ we get: 
\begin{eqnarray*}
D_{f_*X}f_*Y &=& f_*(\widetilde\nabla_X Y) + \widetilde{h}(X,Y) \widetilde\xi , \\
D_{f_*X}\widetilde\xi &=& -f_*(\widetilde{S}(X)) + \widetilde\tau(X)\widetilde\xi.
\end{eqnarray*}
One can easily check that $\widetilde\nabla$ is a torsion-free connection on $f(M)$, $\widetilde{h}$ is a symmetric $(0,2)$-tensor, $\widetilde{S}$ is an endomorphism of $TM$ and $\widetilde\tau$ is a one-form on $f(M)$, and that all the elements are independent of the extensions. Note that $D$ depends only on the affine structure on $\R^{n+1}$. Below we suppress the $f_*$ from the notation.\newline Now assume $\widetilde{h}$ is defined for every transverse vector field $\widetilde\xi$, then the affine normal $\xi$ can be defined by the following requirements:
\begin{itemize}
\item[(1)] $h:=\widetilde{h}$ is positive definite.
\item[(2)] $\widetilde\tau=0$.
\item[(3)] For each frame of tangent vectors $\{X_1,\dots, X_n\}$ to $M$, $$\det_{\R^{n+1}} (X_1,\dots,X_n,\xi)^2 = \det_{1\le i,j\le n} h(X_i,X_j).$$
\end{itemize}
The first condition means that $\xi$ points inward (so that $M$ and $\xi$ lie on the same side of the tangent plane). Moreover, if $\tilde h$ is not definite but non-degenerate, we can still define the affine normal up to a choice of orientation in a similar manner. The affine structure equations are then \begin{equation}\begin{aligned}\label{structurequations}
D_XY &= \nabla_XY + h(X,Y)\xi \\
D_X\xi &= -S(X)
\end{aligned}\end{equation}
In this case, $\nabla$ is called the \emph{Blaschke connection}, $h$ is the \emph{Blaschke metric} and $S$ is the \emph{affine shape operator}.\newline Now that we have introduced the basics of affine differential geometry we can define the affine spheres. There are several ways to do this and the one we will use will be through the affine shape operator, which is the most convenient for our discussion. \begin{definition}
Let $M$ be an immersed hypersurface in $\R^{n+1}$ with structure equations given by (\ref{structurequations}), then it is called an \emph{affine sphere} if $S=\lambda\cdot\Id_{TM}$, where $\lambda$ is a real-valued smooth function defined on $M$.
\end{definition}It is not hard to see that the function $\lambda$ has to be constant and hence, by rescaling, we can assume $\lambda\in\{-1,0,1\}$. We say that an affine sphere is \emph{parabolic} if $\lambda=0$, \emph{elliptic} if $\lambda=1$ and \emph{hyperbolic} if $\lambda=-1$. Moreover, in the last two cases we can translate the affine spheres so that $\xi=-\lambda f$, where $f:M\to\R^{n+1}$ is the immersion. For parabolic affine spheres we may apply a linear map to ensure that $\xi$ is the last coordinate vector.
\begin{remark}
In general, it is not easy to write down affine spheres except in very special cases. The simplest examples are the following: the elliptic paraboloid for $\lambda=0$, the round sphere and the ellipsoids for $\lambda=1$ and one sheet of a hyperbolic paraboloid for $\lambda=-1$. Moreover, for the parabolic and elliptic case those are the only examples of such affine spheres which are also closed subsets of $\R^{n+1}$.
\end{remark}
The global geometry of hyperbolic affine spheres is much more complicated but, at the same time, more interesting. Their properties were conjectured by Calabi (\cite{calabi1972complete}) and proved by Cheng-Yau (\cite{cheng1977regularity}, \cite{cheng1986complete}) and Calabi-Nirenberg (with clarifications by Gigena (\cite{gigena1981conjecture}) and Li (\cite{li1990calabi}, \cite{li1992calabi})). The most important result is the following: \begin{theorem}[Cheng-Yau-Calabi-Nirenberg]\label{thmchengyau}
Given a constant $\lambda<0$ and a convex, bounded domain $\Omega\subset\R^n$, there is a unique properly embedded hyperbolic affine sphere $M\subset\R^{n+1}$ with affine shape operator $S=\lambda\cdot\Id_{TM}$ and center $0$ asymptotic to the boundary of the cone $\mathcal{C}(\Omega):=\{(tx,t) \ | \ x\in\Omega, t>0\}\subset\R^{n+1}$. For any immersed hyperbolic affine sphere $f: M\rightarrow\R^{n+1}$, properness of the immersion is equivalent to the completeness of the Blaschke metric, and any such $M$ is a properly embedded hypersurface asymptotic to the boundary of the cone given by the convex hull of $M$ and its center.
\end{theorem}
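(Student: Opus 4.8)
The strategy is to convert the affine-sphere condition $S=\lambda\cdot\Id$ into a single scalar equation of Monge-Ampère type on $\Omega$, solve it, and then read the geometry back off the solution. After rescaling we may assume $\lambda=-1$. Since $M$ is to be asymptotic to $\partial\mathcal{C}(\Omega)$ with center $0$, it must be star-shaped with respect to the origin, so each ray through a point of $\mathcal{C}(\Omega)$ meets $M$ exactly once. I would therefore write $M$ as a radial graph
\begin{equation*}
M = \left\{\, -\frac{1}{u(x)}\,(x,1) \ : \ x\in\Omega \,\right\},
\end{equation*}
for an unknown convex function $u\colon\Omega\to\R$ with $u<0$ in $\Omega$. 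Computing the induced flat connection $D$ and isolating the Blaschke metric and shape operator as in \eqref{structurequations}, the condition $S=-\Id$ together with the normalization in item (3) becomes equivalent, up to a dimensional constant, to the real Monge-Ampère equation
\begin{equation*}
\det\!\left(\frac{\partial^2 u}{\partial x_i\,\partial x_j}\right) = \left(\frac{-1}{u}\right)^{\,n+2},\qquad u<0 \text{ in }\Omega,\qquad u=0 \text{ on }\partial\Omega.
\end{equation*}
Thus the whole theorem reduces to existence, uniqueness and regularity of a convex solution of this Dirichlet problem, together with a dictionary between analytic properties of $u$ and geometric properties of $M$.

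For existence I would use the continuity method combined with an exhaustion. The difficulty is that the right-hand side blows up as $u\to 0$, so the equation is simultaneously degenerate and singular at $\partial\Omega$ and one cannot pose the Dirichlet problem directly. I would exhaust $\Omega$ by smooth, uniformly convex domains $\Omega_k\nearrow\Omega$ and, on each $\Omega_k$, solve the nondegenerate problem obtained by replacing the boundary value $0$ with a large negative constant $-c_k$; on a fixed strictly convex smooth domain this is a classical nondegenerate Monge-Ampère Dirichlet problem, solvable by Caffarelli-Nirenberg-Spruck type theory. The heart of the argument is then a set of a priori estimates uniform on interior compacta: a $C^0$ bound from barriers built out of the defining function of the cone (comparing with explicit sub- and supersolutions adapted to $\Omega$), an interior gradient bound, an interior second-derivative (Pogorelov) estimate exploiting convexity, and Calabi's interior third-order estimate, after which Schauder theory promotes the solution to $C^\infty$. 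Passing to the limit $k\to\infty$ produces a convex solution $u$ on all of $\Omega$ with $u=0$ on $\partial\Omega$.

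Uniqueness follows from the comparison principle for the Monge-Ampère operator on convex functions: given two solutions $u_1,u_2$, one applies the maximum principle on the open set where they disagree and uses the monotonicity of $t\mapsto(-1/t)^{n+2}$ to force $u_1=u_2$. It then remains to translate analysis into geometry. The asymptotics are immediate: as $x\to\partial\Omega$ one has $u(x)\to 0^-$, hence $-1/u\to+\infty$, so the corresponding points of $M$ escape to infinity along rays tending to $\partial\mathcal{C}(\Omega)$. The equivalence of properness of the immersion with completeness of the Blaschke metric is the most delicate remaining point: the Blaschke metric is expressed through $D^2u$ together with a conformal factor depending on $u$, and one must show that an immersed hyperbolic affine sphere with complete Blaschke metric is necessarily the radial graph over the cone spanned by the convex hull of $M$, which is the content of the rigidity arguments of Calabi and Cheng-Yau. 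The main obstacle throughout is controlling the solution uniformly up to the singular boundary $\partial\Omega$ and verifying that the limiting solution is the maximal one, since only this choice yields a complete, properly embedded hypersurface; the interior estimates are classical, but the boundary analysis and the completeness/properness dictionary carry the real weight of the proof.
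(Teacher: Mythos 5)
Your proposal cannot be checked against an argument in the paper, because the paper gives none: Theorem \ref{thmchengyau} is stated purely as background, attributed to Cheng--Yau and Calabi--Nirenberg (with clarifications by Gigena and Li), and its proof lives entirely in the cited references. What can be assessed is how your sketch relates to that classical proof, and on that score it is faithful: parametrizing the affine sphere as the radial graph $\bigl\{-(1/u(x))\,(x,1)\ :\ x\in\Omega\bigr\}$ and reducing $S=-\Id_{TM}$ to the Monge--Amp\`ere problem $\det D^2u=(-1/u)^{n+2}$, $u<0$ in $\Omega$, $u=0$ on $\partial\Omega$, is exactly the Cheng--Yau formulation, and existence by exhaustion with interior Pogorelov and Calabi estimates, plus uniqueness by the comparison principle (using that the right-hand side is increasing in $u$), is the standard route in the literature.

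Two substantive caveats. First, your approximation scheme is stated backwards: if you impose boundary data $-c_k$ with $c_k$ \emph{large}, then, since a convex function attains its maximum on the boundary, $u_k\le -c_k$ throughout $\Omega_k$, and no limit of such functions can vanish on $\partial\Omega$; you need constants $c_k\to 0$, with monotonicity from the comparison principle and the cone barriers controlling the boundary values of the limit. This is fixable but, as written, the limiting step fails. Second, and more seriously, the equivalence of properness with completeness of the Blaschke metric is not a loose end that follows from the Monge--Amp\`ere analysis: it is the deepest part of the statement, historically occupying a separate paper of Cheng--Yau whose arguments later required repair by Gigena and Li, and it is also precisely the part this paper invokes (the construction in Section 2.2 produces complete Blaschke metrics, and only this equivalence converts them into properly embedded spheres asymptotic to a cone, hence into properly convex domains). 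Your sketch names this step and attributes it correctly, but does not engage with it; so the proposal is an accurate map of the classical proof rather than a proof.
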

 \begin{remark}\label{rmk:correspondencehypaffinesphereandpropconvex}
Theorem \ref{thmchengyau} induces a correspondence between equivariant hyperbolic affine $2$-spheres and properly convex $\RP$-structures on a closed orientable surface $\Sg$, with $g\ge 1$. A similar correspondence holds also in a more general context but it is slightly more complicated, see \cite{loftin2001affine}
\end{remark}
Before specializing in the case of surfaces, we first want to recall the definition and some properties of the so-called \emph{Pick tensor} in arbitrary dimension.\newline
Let $f:\!(M, h, \nabla)\hookrightarrow\R^{n+1}$ be an immersed affine sphere, namely an immersed hypersurface satisfying equations (\ref{structurequations}) with $S=\lambda\cdot\Id_TM$. If $\widehat{\nabla}$ denotes the Levi-Civita connection of $h$, then $\nabla=\widehat\nabla +A$, where $A$ is a section of $T^*(M)\otimes\End(TM)$. In particular, for every $X\in\Gamma(TM)$ the quantity $A(X)$ is an endomorphism of $TM$.
\begin{proposition}[{\cite[Lemma 4.3, Lemma 4.4]{benoist2013cubic}}]\label{prop:pickform}
The section $A$ has the following properties:
\begin{itemize}
    \item[(1)] $A(X)Y=A(Y)X, \qquad\forall X,Y\in\Gamma(TM)$
    \item[(2)] The endomorphism $A(X)$ is trace-free and $h$-symmetric, $\forall X\in\Gamma(TM)$
    \item[(3)] $\mathrm{d}^{\widehat\nabla}A=0$, where $\mathrm{d}^{\widehat\nabla}A(X,Y)=\big(\widehat\nabla_XA\big)(Y)-\big(\widehat\nabla_YA\big)(X),\qquad X,Y\in\Gamma(TM)$
\end{itemize}
\end{proposition}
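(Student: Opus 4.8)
The plan is to extract all three assertions from the flatness and torsion-freeness of the ambient connection $D$ on $\R^{n+1}$, read through the structure equations (\ref{structurequations}) with $S=\lambda\cdot\Id_{TM}$. Throughout I would set $C(X,Y,Z):=h(A(X)Y,Z)$ for the associated cubic form and use repeatedly that $\widehat\nabla$ is torsion-free and metric, i.e. $\widehat\nabla h=0$. First I would dispatch (1): since $\nabla=\widehat\nabla+A$ and both the Blaschke connection $\nabla$ and the Levi-Civita connection $\widehat\nabla$ are torsion-free, subtracting their two torsion identities gives
\[
A(X)Y-A(Y)X=(\nabla_XY-\nabla_YX)-(\widehat\nabla_XY-\widehat\nabla_YX)=[X,Y]-[X,Y]=0,
\]
which is exactly (1); equivalently, $C$ is symmetric in its first two arguments.

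For the $h$-symmetry in (2) I would expand $R^D(X,Y)Z=0$ using (\ref{structurequations}) and $D_X\xi=-\lambda X$. The tangential part yields the Gauss equation $R^\nabla(X,Y)Z=\lambda\big(h(Y,Z)X-h(X,Z)Y\big)$, while the normal part, after converting ordinary derivatives of $h$ into covariant ones and invoking the torsion-freeness of $\nabla$, collapses to the Codazzi identity $(\nabla_Xh)(Y,Z)=(\nabla_Yh)(X,Z)$. Because $h$ is symmetric in its two slots, $\nabla h$ is then totally symmetric. Differentiating $h$ along $\nabla=\widehat\nabla+A$ and using $\widehat\nabla h=0$ gives $(\nabla_Xh)(Y,Z)=-C(X,Y,Z)-C(X,Z,Y)$; combining total symmetry of $\nabla h$ with symmetry of $C$ in its first two arguments, a short permutation argument (the transposition and the three-cycle together generate $S_3$) forces $C$ to be totally symmetric. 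In particular $C(X,Y,Z)=C(X,Z,Y)$, i.e. each $A(X)$ is $h$-symmetric.

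The trace-free part of (2) is the delicate point, and the only one that genuinely uses the \emph{choice} of affine normal rather than an arbitrary transverse field. Conditions (2), $\widetilde\tau=0$, and (3), the determinant normalization, say precisely that $\nabla$ is equiaffine and that its parallel volume form agrees with the Riemannian volume form $\omega_h$ of $h$; hence $\nabla\omega_h=0$. Since $\widehat\nabla\omega_h=0$ as well, the induced action of the difference tensor on the top exterior power gives $0=(\nabla_X-\widehat\nabla_X)\omega_h=-\tr(A(X))\,\omega_h$, so $\tr(A(X))=0$. I expect this apolarity step to be the main obstacle, since it is the one statement not formally implied by flatness and torsion-freeness alone.

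Finally, (3) follows by returning to the Gauss equation. Writing $R^\nabla(X,Y)=R^{\widehat\nabla}(X,Y)+\mathrm{d}^{\widehat\nabla}A(X,Y)+[A(X),A(Y)]$ and sorting the endomorphisms by their $h$-symmetry type: $R^{\widehat\nabla}(X,Y)$ is $h$-skew, $[A(X),A(Y)]$ is $h$-skew (a commutator of $h$-symmetric operators, using (2)), the Gauss right-hand side $Z\mapsto\lambda(h(Y,Z)X-h(X,Z)Y)$ is $h$-skew, whereas $\mathrm{d}^{\widehat\nabla}A(X,Y)$ is $h$-symmetric because $\widehat\nabla$ preserves $h$-symmetry. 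Taking the $h$-symmetric part of the Gauss equation then annihilates every term except $\mathrm{d}^{\widehat\nabla}A(X,Y)$, giving $\mathrm{d}^{\widehat\nabla}A=0$, which is (3); the $h$-skew part meanwhile recovers the Gauss equation for the Blaschke metric.
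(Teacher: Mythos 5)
Your proposal is correct, and every step checks out. One structural remark before the comparison: the paper offers no proof of this proposition at all --- it is imported wholesale from Benoist--Hulin (Lemmas 4.3 and 4.4 of the cited reference) --- so the comparison is with the classical affine-differential-geometry argument rather than with anything internal to the text. Your proof is essentially that classical argument, organized cleanly, and it is worth recording where each hypothesis enters. Item (1) follows, as you say, from subtracting the torsion identities of $\nabla$ and $\widehat\nabla$ (torsion-freeness of $\nabla$ itself coming from torsion-freeness of $D$ and symmetry of $h$ in the structure equations). For the $h$-symmetry in (2), the normal component of $R^D=0$ does collapse, after using torsion-freeness of $\nabla$, to the Codazzi identity $(\nabla_X h)(Y,Z)=(\nabla_Y h)(X,Z)$; writing $(\nabla_X h)(Y,Z)=-C(X,Y,Z)-C(X,Z,Y)$ and using the symmetry of $C$ in its first two slots, the Codazzi symmetry yields $C(X,Z,Y)=C(Y,Z,X)$, i.e.\ invariance of $C$ under the transposition $(13)$, and since $(12)$ and $(13)$ generate $\mathfrak{S}_3$ the tensor $C$ is totally symmetric --- this is a cleaner bookkeeping than your parenthetical about a transposition and a three-cycle, but your claim is the same and is valid. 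The trace-freeness is indeed the apolarity condition and you are right that it is the only place the normalization of the affine normal is used: $\widetilde\tau=0$ makes the induced volume form $\det(\,\cdot\,,\dots,\,\cdot\,,\xi)$ parallel for $\nabla$, the determinant normalization in the definition of the affine normal identifies it with $\omega_h$, and the action of the difference tensor on a top-degree form gives $0=(\nabla_X-\widehat\nabla_X)\omega_h=-\tr(A(X))\,\omega_h$. Finally, for (3) the decomposition $R^\nabla(X,Y)=R^{\widehat\nabla}(X,Y)+\mathrm{d}^{\widehat\nabla}A(X,Y)+[A(X),A(Y)]$ is correct, and sorting by $h$-symmetry type kills everything except $\mathrm{d}^{\widehat\nabla}A(X,Y)$; the one point worth writing out in full is that $(\widehat\nabla_XA)(Y)$ is $h$-symmetric, which uses both $\widehat\nabla h=0$ and the already-established $h$-symmetry of $A(Y)$ and $A(\widehat\nabla_XY)$, so the ordering of (2) before (3) in your argument is essential and you respect it. What your write-up buys over the bare citation is a self-contained derivation exhibiting precisely which conclusions come from flatness and torsion-freeness of $D$ alone (items (1), the $h$-symmetry, and (3)) and which one genuinely requires the choice of affine normal (the trace condition).
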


\begin{definition}\label{def:picktensor}
The \emph{Pick tensor} is the $(0,3)$-tensor defined by \begin{equation}\label{picktensorandpickform} C(X,Y,Z):=h(A(X)Y,Z), \quad\forall X,Y,Z\in\Gamma(TM)\end{equation}
\end{definition}

\begin{corollary}\label{picktensoraffinesphere}
If $f:\!(M, h, \nabla)\hookrightarrow\R^{n+1}$ is an immersed affine sphere, then the Pick tensor is totally symmetric, namely in index notation $C_{ijk}$ we have $$C_{ijk}=C_{\sigma(ijk)},\quad\forall\sigma\in\mathfrak S_3$$
In particular, the following relation holds $$ \big(\nabla_Xh\big)\big(Y,Z\big)=-2 C(X,Y,Z) \ . $$
\end{corollary}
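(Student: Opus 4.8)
The plan is to read off the total symmetry of $C$ directly from the algebraic properties of $A$ collected in Proposition \ref{prop:pickform}, and then to deduce the stated identity for $\nabla h$ from the decomposition $\nabla = \widehat\nabla + A$ together with the metric compatibility of the Levi-Civita connection $\widehat\nabla$. No analysis or differential identity beyond these is needed for this corollary.

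First I would establish invariance of $C$ under the two transpositions that generate $\mathfrak{S}_3$. For the swap of the first two arguments, property (1) of Proposition \ref{prop:pickform} gives $A(X)Y = A(Y)X$, whence
\[
C(X,Y,Z) = h(A(X)Y, Z) = h(A(Y)X, Z) = C(Y,X,Z).
\]
For the swap of the last two arguments, the $h$-symmetry of the endomorphism $A(X)$ from property (2) gives
\[
C(X,Y,Z) = h(A(X)Y, Z) = h(Y, A(X)Z) = C(X,Z,Y).
\]
Since the transpositions $(1\,2)$ and $(2\,3)$ generate the full symmetric group $\mathfrak{S}_3$, the tensor $C$ is invariant under every permutation of its arguments, which is the claimed total symmetry $C_{ijk} = C_{\sigma(ijk)}$.

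For the second assertion I would expand the covariant derivative of the $(0,2)$-tensor $h$ and compare it with the corresponding expression for $\widehat\nabla$. Writing $\nabla_X Y = \widehat\nabla_X Y + A(X)Y$ and using that $\widehat\nabla$ is the Levi-Civita connection of $h$, so that $(\widehat\nabla_X h)(Y,Z) = 0$, the difference of the two covariant derivatives collapses to the algebraic terms coming from $A$:
\[
(\nabla_X h)(Y,Z) = -\,h(A(X)Y, Z) - h(Y, A(X)Z).
\]
By the $h$-symmetry of $A(X)$ the second summand $h(Y,A(X)Z)$ equals $h(A(X)Y,Z) = C(X,Y,Z)$, while the first summand is $C(X,Y,Z)$ by the very definition of the Pick tensor; hence the right-hand side is $-2\,C(X,Y,Z)$, as required.

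There is essentially no serious obstacle here: both claims follow formally from Proposition \ref{prop:pickform} and the identity $\nabla = \widehat\nabla + A$. The only point demanding a little care is bookkeeping of signs and of which argument sits inside the endomorphism $A(X)$ versus which is contracted by $h$. It is worth noting that property (3), the Codazzi-type closedness $\mathrm{d}^{\widehat\nabla}A = 0$, plays no role in this corollary; it will instead enter later, where it is used together with the trace-free and symmetric conditions to identify $C$ in dimension two with a holomorphic cubic differential.
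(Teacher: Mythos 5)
Your proof is correct and follows exactly the route the paper intends: the paper states this corollary without proof (deferring to Benoist--Hulin), as an immediate consequence of Proposition \ref{prop:pickform} and Definition \ref{def:picktensor}, and your derivation --- total symmetry from properties (1)--(2) via the generating transpositions, and the formula for $\nabla h$ from $\nabla = \widehat\nabla + A$ together with metric compatibility of $\widehat\nabla$ --- supplies precisely those omitted details with the correct signs. Your closing observation that property (3) plays no role here, entering only later in Theorem \ref{thm:picktensor} to identify $C$ with the real part of a holomorphic cubic differential, is also accurate.
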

These last two results hold in a more general context where $M$ is not necessarily an immersed affine sphere, see \cite[\S 4.2]{benoist2013cubic}. In dimension $2$ the Pick tensor is closely related to the conformal structure of the surface: \begin{theorem}[{\cite[Lemma 4.8]{benoist2013cubic}}]\label{thm:picktensor}
Let $M$ be a smooth orientable surface endowed with a metric $h$ and a compatible complex structure $J$. Suppose the tensor $C(X,Y,Z)=h(A(X)Y,Z)$ is totally symmetric, then $A(X)$ is trace-free and $\mathrm{d}^{\widehat\nabla}A=0, \ \forall X\in\Gamma(TM)$ if and only if $C=\Re(q)$, where $q$ is a cubic holomorphic differential on $(M,J)$. If this is the case, then $q=C(\cdot,\cdot,\cdot)+iC(J\cdot,J\cdot,J\cdot)$
\end{theorem}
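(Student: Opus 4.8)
The plan is to reduce everything to a local computation in a conformal coordinate and to read off the two hypotheses as the two defining features of a holomorphic cubic differential: the trace-free condition isolates the $(3,0)$- and $(0,3)$-parts of $C$, while the closedness condition $\mathrm{d}^{\widehat\nabla}A=0$ becomes the Cauchy--Riemann equation.

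First I would fix a local holomorphic coordinate $z=x+iy$ compatible with $J$ and write the compatible metric as $h=e^{\phi}\vl\mathrm{d}z\vl^2$. For such a conformal metric the only nonzero Christoffel symbols of $\widehat\nabla$ are $\Gamma^z_{zz}=\partial_z\phi$ and $\Gamma^{\bar z}_{\bar z\bar z}=\partial_{\bar z}\phi$; in particular every mixed symbol of type $\Gamma^{\bullet}_{z\bar z}$ vanishes, which is the computational engine of the whole argument. Decomposing a real symmetric $(0,3)$-tensor into its $(3,0),(2,1),(1,2),(0,3)$ components, I would write $C=\alpha\,\mathrm{d}z^3+\beta\,\mathrm{d}z^2\mathrm{d}\bar z+\bar\beta\,\mathrm{d}z\,\mathrm{d}\bar z^2+\bar\alpha\,\mathrm{d}\bar z^3$. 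Computing the $h$-trace in the last two slots shows that $\tr(A(X))=h^{jk}C(X,\partial_j,\partial_k)$ is governed exactly by the mixed coefficient $\beta$, so that $A(X)$ is trace-free for all $X$ if and only if $\beta\equiv0$, i.e. if and only if $C=\Re(q)$ for the $(3,0)$-form $q:=2\alpha\,\mathrm{d}z^3$ (not yet assumed holomorphic).

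Next I would translate the closedness condition. Since $\widehat\nabla h=0$, lowering an index turns $\mathrm{d}^{\widehat\nabla}A=0$ into the statement that $(\widehat\nabla_XC)(Y,Z,W)$ is symmetric under $X\leftrightarrow Y$; combined with the total symmetry of $C$ in its three arguments this is equivalent to $\widehat\nabla C$ being a totally symmetric $(0,4)$-tensor, since the transposition swapping the derivative slot with the first argument, together with the symmetries of $C$, generate all of $\mathfrak S_4$. Assuming $C=\Re(\psi\,\mathrm{d}z^3)$ is already trace-free, I would compute the components of $\widehat\nabla C$ using the Christoffel symbols above. Because the mixed symbols vanish, $(\widehat\nabla_{\bar z}C)_{zzz}=\partial_{\bar z}C_{zzz}=\tfrac12\partial_{\bar z}\psi$, whereas the only other component of type $\{z,z,z,\bar z\}$, namely $(\widehat\nabla_zC)_{zz\bar z}$, vanishes identically once $\beta=0$, and every remaining mixed component vanishes for the same reason. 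Hence $\widehat\nabla C$ is totally symmetric if and only if $\partial_{\bar z}\psi=0$, that is, if and only if $q$ is holomorphic; this yields the equivalence in both directions.

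Finally, to verify the stated formula $q=C(\cdot,\cdot,\cdot)+iC(J\cdot,J\cdot,J\cdot)$, I would extend $C$ complex-linearly and use $J\partial_z=i\partial_z$, $J\partial_{\bar z}=-i\partial_{\bar z}$: evaluating the right-hand side on $(\partial_z,\partial_z,\partial_z)$ gives $\tfrac12\psi+i(-\tfrac{i}{2}\psi)=\psi=q(\partial_z,\partial_z,\partial_z)$, while on $(\partial_{\bar z},\partial_{\bar z},\partial_{\bar z})$ and on every mixed triple it returns $0$, matching $q$. I expect the main obstacle to be purely bookkeeping: keeping the numerical factors in the type decomposition consistent and checking that no mixed component of $\widehat\nabla C$ other than the one proportional to $\partial_{\bar z}\psi$ can obstruct total symmetry. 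Conceptually the argument is transparent, since for a metric that is automatically K\"ahler in complex dimension one the induced Levi-Civita connection on $K^3$ has $(0,1)$-part equal to $\bar\partial$, so the Codazzi equation is nothing but $\bar\partial q=0$.
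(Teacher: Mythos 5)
Your proposal is correct, but there is an important point of comparison to make first: the paper never proves this statement at all. It is quoted, with attribution, from Benoist--Hulin \cite[Lemma 4.8]{benoist2013cubic}, so there is no internal argument to measure your write-up against; what you have produced is a self-contained replacement for that citation. On the merits, your argument checks out. For $h=e^{\phi}|\mathrm{d}z|^2$ the only nonvanishing Christoffel symbols are indeed $\Gamma^z_{zz}=\partial_z\phi$ and $\Gamma^{\bar z}_{\bar z\bar z}=\partial_{\bar z}\phi$; since $\tr A(X)=h^{jk}C(X,\partial_j,\partial_k)$ and $h^{zz}=h^{\bar z\bar z}=0$, trace-freeness of all $A(X)$ is exactly the vanishing of the mixed coefficient $\beta$, i.e.\ $C$ has pure type $(3,0)+(0,3)$; lowering an index with $\widehat\nabla h=0$ converts $\mathrm{d}^{\widehat\nabla}A=0$ into symmetry of $\widehat\nabla C$ in the derivative slot and the first argument, and since that transposition together with $\mathfrak{S}_{\{2,3,4\}}$ generates $\mathfrak{S}_4$, this is equivalent to total symmetry of $\widehat\nabla C$; finally, once $\beta=0$, the only nonzero mixed component is $(\widehat\nabla_{\bar z}C)_{zzz}=\tfrac12\partial_{\bar z}\psi$ while $(\widehat\nabla_z C)_{zz\bar z}$ and both components of type $\{z,z,\bar z,\bar z\}$ vanish, so total symmetry is precisely the Cauchy--Riemann equation $\partial_{\bar z}\psi=0$. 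The evaluation of $C(\cdot,\cdot,\cdot)+iC(J\cdot,J\cdot,J\cdot)$ on $\partial_z$, on $\partial_{\bar z}$, and on mixed triples is also right and recovers $q$.

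Two small points you should make explicit in a final version. First, your $q=2\alpha\,\mathrm{d}z^3$ is defined in a coordinate chart; you should remark that the $(3,0)$-part of a globally defined real tensor is coordinate-independent, so these local expressions patch into a global cubic differential on $(M,J)$. Second, in the forward direction your computation of $\widehat\nabla C$ presupposes $\beta=0$, so the logical order is: use trace-freeness first to kill $\beta$, and only then invoke $\mathrm{d}^{\widehat\nabla}A=0$ to get holomorphicity. As written this ordering is implicit and the logic does go through, but stating it removes any appearance of circularity. Neither point is a gap; the proof is sound and, in spirit, is the same conformal-coordinate computation that the cited reference carries out.
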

\begin{remark}
Thanks to Corollary \ref{picktensoraffinesphere} we notice that if $M$ is an immersed affine $2$-sphere its Pick tensor is totally symmetric, hence it can always be expressed as the real part of a cubic holomorphic differential on $(M,J)$.
\end{remark}

\subsection{Wang's equation}
It is clear, from the previous section, that there is a deep relation between a fixed conformal structure on an affine $2$-sphere and the Pick tensor. Here, we want to give a more explicit treatment of this phenomenon following the work of Wang, see \cite{changping1990some}, and Loftin-McIntosh, see \cite{loftin2013cubic}.\newline
For convex affine spheres, the affine metric $h$ is positive-definite, and thus it provides a conformal structure in dimension two. In this case, the Pick tensor can be identified with the real part of a holomorphic cubic differential. We now derive the structure equations for these affine spheres in terms of the conformal structure. Choose $z=x+iy$ a local conformal coordinate with respect to $h$, so that $h=e^{\psi}\vl\mathrm{d}z\vl^2$, where $\vl\mathrm{d}z\vl^2$ is defined as the symmetric product between $\mathrm{d}z$ and $\mathrm{d}\bar{z}$. Parametrize the affine sphere by $f\!:\Delta \to \R^3$, where $\Delta$ is a simply-connected domain in $\C$. Since $\{e^{-\frac{1}{2}\psi}f_x, e^{-\frac{1}{2}\psi}f_y\}$ is a $h$-orthonormal basis of the tangent space, the affine normal satisfies $$\det (e^{-\frac{1}{2}\psi}f_x, e^{-\frac{1}{2}\psi}f_y, \xi) = 1$$ which implies $$\det (f_x, f_y, \xi) = e^\psi \ .$$ By rewriting all in terms of $$\frac{\partial f}{\partial z}=\frac{1}{2}(f_x-if_y) \quad \text{and} \quad \frac{\partial f}{\partial\bar{z}}=\frac{1}{2}(f_x+if_y)$$ we get $$\det(f_z, f_{\bar{z}}, \xi)=ie^\psi \ .$$
The affine structure equations are
\begin{equation}\label{structurequationsaffinesphere}\begin{aligned}
D_XY &= \nabla_XY + h(X,Y)\xi \\
D_X\xi &= -\lambda\cdot X \ . 
\end{aligned}\end{equation}
Now consider the coordinate frame $\{e_1:=f_z:=f_*(\frac\partial{\partial z}), \ e_{\bar 1}:=f_{\bar z}:=f_*(\frac\partial{\partial \bar z})\}$. Hence,
$$ h(f_z,f_z)= h(f_{\bar z}, f_{\bar z}) = 0,\quad h(f_z,f_{\bar z}) = \frac{1}{2}e^{\psi}.$$
Let  $\theta$ be the matrix of connection one-forms for $\nabla$, i.e. 
$$ \nabla e_i = \theta^j_ie_j, \quad i,j\in\{1,\bar 1\}.$$
If $\hat\theta$ is the matrix of connection one-forms of the Levi-Civita connection, then $$ \hat\theta^1_{\bar 1} = \hat\theta^{\bar 1}_1 = 0, \quad \hat\theta^1_1 = \partial \psi, \quad \hat \theta^{\bar 1}_{\bar 1} = \bar\partial \psi \ . $$ The difference $\nabla-\widehat\nabla$ is equal to the so-called \emph{Pick form}, namely the section of $T^*M\otimes\End(TM)$ satisfying Proposition \ref{prop:pickform}. In local coordinates $$\theta_i^j-\hat\theta_i^j=A^j_{ik}\rho^k,\quad i,j\in\{1,\bar{1}\}$$ where $\{\rho^1=\mathrm{d}z, \rho^{\bar{1}}=\mathrm{d}\bar{z}\}$ is the dual frame of one-forms. By lowering an index we get the Pick tensor $$C_{ijk}=h_{il}A^l_{jk},\quad i,j,k\in\{1,\bar{1}\}$$ which is totally symmetric, as one can see from the last equation. In particular, all the components of $C$ must vanish except for $C_{111}$ and $\overline{C_{111}}=C_{\bar{1}\bar{1}\bar{1}}$. This discussion completely determines $\theta$, indeed
\begin{equation*}
   \theta=\begin{pmatrix}\theta_1^1 & \theta^1_{\bar{1}} \\ \theta_1^{\bar{1}} & \theta_{\bar{1}}^{\bar{1}}
    \end{pmatrix}=\begin{pmatrix}\partial\psi & e^{-\psi}\bar{Q}\mathrm{d}\bar{z} \\ e^{-\psi}Q\mathrm{d}z & \bar\partial\psi
    \end{pmatrix}
\end{equation*}where $Q:=2C_{111}$ is a smooth function on the affine sphere.\newline Since $D$ is the standard (flat) connection on $\R^3$, by using the structure equations (\ref{structurequationsaffinesphere}) we have
\begin{equation}\label{structurequationsconformal}\begin{aligned}
f_{zz}&:=D_{f_z}f_z=\nabla_{\frac{\partial}{\partial z}}\frac{\partial}{\partial z}=\psi_zf_z+e^{-\psi}Qf_{\bar z} \\
f_{\bar z\bar z}&:=D_{f_{\bar z}}f_{\bar z}=\nabla_{\frac{\partial}{\partial\bar z}}\frac{\partial}{\partial\bar z}=\psi_{\bar z}f_{\bar z}+e^{-\psi}\bar Qf_z \\ f_{z\bar z}&:=D_{f_z}f_{\bar z}=\nabla_{\frac{\partial}{\partial z}}\frac{\partial}{\partial\bar z}+h(f_z,f_{\bar z})\xi=\frac{1}{2}e^\psi\xi \ .
\end{aligned}\end{equation}We can translate our affine sphere so that $\xi=-\lambda f$, hence by combining (\ref{structurequationsconformal}) with this last equation we get a $1^{st}$-order linear system in $\big(\mathbf{F}_\lambda\big)^t:=(f_z, f_{\bar z}. \xi)$, given by \begin{equation}\label{ODEsystem}\begin{aligned}
\frac\partial{\partial z}\begin{pmatrix}
f_z \\ f_{\bar z} \\ \xi \end{pmatrix}&=\begin{pmatrix} \psi_z & Qe^{-\psi} &0 \\
 0 & 0 & \frac{1}{2}e^{\psi}  \\ -\lambda & 0 & 0\end{pmatrix}\begin{pmatrix}f_z \\ f_{\bar z} \\ \xi \end{pmatrix} \\
\frac\partial{\partial \bar z}
\begin{pmatrix} f_z \\ f_{\bar z} \\ \xi \end{pmatrix}&=
\begin{pmatrix} 0 & 0 & \frac{1}{2}e^{\psi}\\
 \bar Q e^{-\psi} & \psi_{\bar z} & 0 \\ 0 & -\lambda & 0 \end{pmatrix}
\begin{pmatrix} f_z \\ f_{\bar z} \\ \xi \end{pmatrix} \ . 
\end{aligned}\end{equation}Given an initial condition for $\big(\mathbf{F}_\lambda\big)^t$ at $z_0\in\Delta$, there exists a unique solution to this system as long as the following integrability conditions\footnote{They can be traced back to the Frobenius Theorem.} are satisfied $$(f_{zz})_{\bar z}=(f_{z\bar z})_z,\quad (f_{\bar z\bar z})_z=(f_{\bar z z})_{\bar z} \ . $$ By using (\ref{structurequationsconformal}), the above equations can be rewritten as
    \begin{equation}\label{integrabilityequations}\begin{aligned}&\psi_{z\bar z}+\vl Q\vl^2e^{-2\psi}+\frac{\lambda}{2}e^\psi=0 \\
        &Q_{\bar z}=0
    \end{aligned}\end{equation}The second equation and the definition of $Q$ implies that $q:=Q\mathrm{d}z^3$ is a holomorphic cubic differential over $\Delta$. Thus, we get the following
    \begin{theorem}[Integrability Theorem]\label{integrabilitytheorem}
    Fix $\lambda\in\{-1,-1\}$. Let $\Delta\subset\C$ be a simply-connected domain, $q=Q\mathrm{d}z^3$ a cubic holomorphic differential on $\Delta$, $\psi:\! \Delta\to\R$ a function satisfying $$\psi_{z\bar z}+\vl Q\vl^2e^{-2\psi}+\frac{\lambda}{2}e^\psi=0$$ $z_0\in\Delta$ and $\xi_0,f_0\in\R^3, a\in\C^3$ so that $\det(a,\bar a,\xi_0)=ie^{\psi(z_0)}$. Then, there exists a unique immersion of affine $2$-sphere $f:\!\Delta\to\R^3$ so that $$f(z_0)=f_0,\quad\xi(z_0)=\xi_0,\quad f_z(z_0)=a,\quad f_{\bar z}(z_0)=\bar a \ . $$Moreover, the pull-back under $f$ of the Blaschke metric and the Pick tensor are $e^\psi\vl\mathrm{d}z\vl^2$ and $Q\mathrm{d}z^3$ respectively.
    \end{theorem}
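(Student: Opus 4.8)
The plan is to read the overdetermined system \eqref{ODEsystem} as a \emph{flat connection} and to recover the immersion in two successive integrations. Abbreviate the system as $\partial_z\mathbf{F}_\lambda=U\,\mathrm{d}z\cdot\mathbf{F}_\lambda$ and $\partial_{\bar z}\mathbf{F}_\lambda=V\,\mathbf{F}_\lambda$, where $\mathbf{F}_\lambda=(f_z,f_{\bar z},\xi)^t$ and $U,V$ are the two $3\times 3$ coefficient matrices appearing in \eqref{ODEsystem}. Equivalently, consider the connection $D-\omega$ on the trivial bundle $\Delta\times\C^3$, with $\omega:=U\,\mathrm{d}z+V\,\mathrm{d}\bar z$. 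By the Frobenius theorem (this is the content of the footnoted remark in the statement), a parallel frame with prescribed value $(a,\bar a,\xi_0)^t$ at $z_0$ exists and is unique on the simply-connected domain $\Delta$ precisely when this connection is flat, i.e. when
\[
\partial_{\bar z}U-\partial_z V+[U,V]=0 .
\]

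The key step, and the main obstacle, is to verify that this matrix curvature identity is equivalent to the two scalar equations of \eqref{integrabilityequations}. I would compute the left-hand side entry by entry: the contributions of $\partial_{\bar z}U$, $\partial_z V$ and the commutator $[U,V]$ each produce terms in $\psi_z$, $\psi_{\bar z}$, $e^{-\psi}$ and $e^{\psi}$, and one must track them carefully. The off-diagonal entries collapse to the holomorphicity condition $Q_{\bar z}=0$ (and its conjugate), while the diagonal entries collapse exactly to Wang's equation $\psi_{z\bar z}+|Q|^2e^{-2\psi}+\tfrac{\lambda}{2}e^{\psi}=0$. This computation is purely mechanical but is where all the geometry is encoded. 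Since by hypothesis $q=Q\,\mathrm{d}z^3$ is holomorphic and $\psi$ solves Wang's equation, both equations hold, the connection is flat, and we obtain a unique smooth frame $(f_z,f_{\bar z},\xi)$ on $\Delta$ realizing the prescribed initial data.

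Next I would reconstruct $f$. Define it by integrating the $\C^3$-valued one-form $f_z\,\mathrm{d}z+f_{\bar z}\,\mathrm{d}\bar z$ starting from $f_0$. The third-row equations of \eqref{ODEsystem} give $\partial_{\bar z}f_z=\tfrac12 e^{\psi}\xi=\partial_z f_{\bar z}$, so this one-form is closed; as $\Delta$ is simply connected the integral is path-independent and $f$ is well-defined with $f(z_0)=f_0$, $f_z(z_0)=a$, $\xi(z_0)=\xi_0$, and with $z$- and $\bar z$-derivatives equal to the prescribed $f_z,f_{\bar z}$. A reality argument (conjugating the system swaps the two blocks, and $\xi_0$, the relation $f_{\bar z}(z_0)=\overline{f_z(z_0)}$ are compatible with this symmetry) shows by uniqueness that $f_{\bar z}=\overline{f_z}$ and $\xi$ is real throughout, so $f$ is $\R^3$-valued. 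Moreover, differentiating $\det(f_z,f_{\bar z},\xi)$ and using \eqref{ODEsystem} gives $\partial_z\det=\psi_z\det$ and $\partial_{\bar z}\det=\psi_{\bar z}\det$, whence $\det(f_z,f_{\bar z},\xi)=ie^{\psi}$ everywhere (the constant being fixed by the hypothesis at $z_0$); in particular the frame is nondegenerate and $f$ is an immersion.

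Finally I would check that $\xi$ is genuinely the affine normal of $f$ and that $S=\lambda\,\Id$, so that $f$ is an affine sphere with the asserted invariants. The last-row equations $D_X\xi=-\lambda X$ give directly $\widetilde\tau=0$ and $\widetilde S=\lambda\,\Id$ (conditions (2) and the affine-sphere property); the determinant identity $\det(f_z,f_{\bar z},\xi)=ie^{\psi}$ is exactly the volume normalization (3); and the $\xi$-component of \eqref{structurequationsconformal} yields the Blaschke metric $h=e^{\psi}|\mathrm{d}z|^2$, which is positive definite, giving condition (1). Hence $\xi$ is the affine normal, and $f$ is a hyperbolic (resp. elliptic) affine sphere for $\lambda=-1$ (resp. $\lambda=1$), with Blaschke metric $e^{\psi}|\mathrm{d}z|^2$ and, reading off the off-diagonal connection coefficient $Q=2C_{111}$, with Pick tensor $\mathrm{Re}(Q\,\mathrm{d}z^3)$ and associated cubic differential $Q\,\mathrm{d}z^3$. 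Uniqueness follows at once: any affine-sphere immersion realizing the data has a frame $(f_z,f_{\bar z},\xi)$ solving \eqref{ODEsystem} with the prescribed value at $z_0$, which Frobenius/ODE uniqueness pins down, and then $f$ is determined by $f(z_0)=f_0$.
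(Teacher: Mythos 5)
Your proof is correct and follows essentially the same route as the paper: the theorem there is presented as the culmination of the derivation immediately preceding it, in which the affine structure equations are recast as the linear system \eqref{ODEsystem} and the Frobenius integrability conditions are shown to reduce exactly to Wang's equation and $Q_{\bar z}=0$, i.e.\ equations \eqref{integrabilityequations}. You additionally spell out details the paper leaves implicit (the reality of $f$ via the conjugation symmetry of the system, the evolution of $\det(f_z,f_{\bar z},\xi)$, and the verification that $\xi$ satisfies the defining conditions of the affine normal), all of which are accurate.
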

    \begin{remark}
    It is clear from this last theorem that in order to find examples of complete hyperbolic affine $2$-spheres we need only to find examples of triple $(\psi, q, \lambda=-1)$ on a simply-connected domain $\Delta\subset\C$ satisfying (\ref{integrabilityequations}).
    \end{remark}
    The strategy to produce such examples was outlined by Wang in \cite[\S 3]{changping1990some} following a work on constant mean curvature surfaces in pseudo-Riemannian space forms by B. Palmer, see \cite{palmer1990spacelike}.\newline Let $\Sg$ be a closed Riemann surface with genus $g\ge 1$. By the well-known Poincar\'e-Koebe Uniformization Theorem we can pick a Riemannian metric $g_0$ of constant curvature $k_0$ on $\Sg$ which is compatible with the initial complex structure. Let $H^0(\Sg, K^3)$ be the holomorphic sections of the tri-canonical bundle over $\Sg$, namely the $\C$-vector space of holomorphic cubic differentials over $\Sg$. It is easy to see, using the Riemann-Roch Theorem, that this space has complex dimension equal to $5g-5$, when $g\ge 2$, and equal to one, when $g=1$. If $z=x+iy$ is a local holomorphic coordinate for $\Sg$, then we can define a norm on $H^0(\Sg, K^3)$, given by: $$\vl\vl q\vl\vl_0^2:=\vl Q\vl^2e^{-3\phi}$$where $q=Q\mathrm{d}z^3$ and $g_0=e^\phi\vl\mathrm{d}z\vl^2$ in this local coordinate. \begin{proposition}\label{semilinearellipticproposition}Let $h=e^\psi\vl\mathrm{d}z\vl^2$ be a Hermitian metric in the same conformal class of $g_0$ and $q\in H^0(\Sg,K^3)$, then $(\psi,q,-1)$ satisfies (\ref{integrabilityequations}) if and only if the function $u:\!\Sg\to\R$, given by $h=e^u g_0$, satisfies:\begin{equation}\label{semilinearelliptic}
        \Delta_{g_0}u+4\vl\vl q\vl\vl^2_0e^{-2u}-2e^u-2k_0=0
    \end{equation}where $\Delta_{g_0}=4e^{-\phi}\partial_z\partial_{\bar z}$ is the Laplace-Beltrami operator on $\Sg$ with respect to $g_0$. 
    \end{proposition}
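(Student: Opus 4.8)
The plan is to reduce both conditions in (\ref{integrabilityequations}) to the single semilinear equation (\ref{semilinearelliptic}) by a purely local change of conformal factor. First I would set up the dictionary between the metrics involved: writing $h=e^\psi\vl\mathrm{d}z\vl^2$, $g_0=e^\phi\vl\mathrm{d}z\vl^2$ and $h=e^ug_0$ forces $\psi=u+\phi$, and since $h$ and $g_0$ lie in the same conformal class, the ratio $e^u=e^{\psi-\phi}$ is a globally well-defined positive function on $\Sg$. The second condition $Q_{\bar z}=0$ in (\ref{integrabilityequations}) is automatic, since $q\in H^0(\Sg,K^3)$ is holomorphic by hypothesis; hence the entire content of the statement is carried by the first integrability equation with $\lambda=-1$, namely $\psi_{z\bar z}+\vl Q\vl^2e^{-2\psi}-\tfrac12 e^\psi=0$.

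Next I would substitute $\psi=u+\phi$ into this equation, obtaining $u_{z\bar z}+\phi_{z\bar z}+\vl Q\vl^2e^{-2u}e^{-2\phi}-\tfrac12 e^ue^\phi=0$. The one non-trivial ingredient is to rewrite $\phi_{z\bar z}$ using the hypothesis that $g_0$ has constant curvature $k_0$: the Gaussian curvature of a conformal metric $e^\phi\vl\mathrm{d}z\vl^2$ is $k_0=-2e^{-\phi}\phi_{z\bar z}$, so that $\phi_{z\bar z}=-\tfrac12 k_0e^\phi$. Inserting this and multiplying the whole equation by $4e^{-\phi}$ yields $4e^{-\phi}u_{z\bar z}-2k_0+4\vl Q\vl^2e^{-2u}e^{-3\phi}-2e^u=0$. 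It then remains to recognize the two normalizing identities $\Delta_{g_0}u=4e^{-\phi}u_{z\bar z}$ and $\vl\vl q\vl\vl_0^2=\vl Q\vl^2e^{-3\phi}$, which turn the displayed equation directly into (\ref{semilinearelliptic}).

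Since every manipulation above is an algebraic identity or a reversible substitution, the argument runs in both directions, giving the claimed equivalence; and because the three quantities $\Delta_{g_0}u$, $\vl\vl q\vl\vl_0^2$ and $e^u$ are coordinate-independent, the computation performed in a local chart globalizes to a single equation on $\Sg$. I do not expect a genuine obstacle: the computation is elementary and local. The only points requiring care are the bookkeeping of the numerical constants and the sign convention in the curvature formula $k_0=-2e^{-\phi}\phi_{z\bar z}$, which is precisely what makes the constant term in (\ref{semilinearelliptic}) come out as $-2k_0$.
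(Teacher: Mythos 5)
Your proof is correct: the substitution $\psi=u+\phi$, the curvature identity $\phi_{z\bar z}=-\tfrac{1}{2}k_0e^{\phi}$ (which carries the right sign under the paper's conventions $g_0=e^{\phi}\vl\mathrm{d}z\vl^2$ and $\Delta_{g_0}=4e^{-\phi}\partial_z\partial_{\bar z}$), and multiplication by $4e^{-\phi}$ reproduce (\ref{semilinearelliptic}) exactly, while holomorphicity of $q$ disposes of the second equation in (\ref{integrabilityequations}) in both directions of the equivalence. The paper states this proposition without proof, treating it as the standard computation from Wang and Loftin--McIntosh, and your argument is precisely that intended computation, with the constants and the $-2k_0$ term coming out correctly.
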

This result implies that the problem of finding a complete hyperbolic affine $2$-sphere can be rephrased in terms of a semi-linear elliptic equation defined on our Riemann surface $\Sg$. This last equation appears in many different contexts, based on the signs that are placed in front of its terms, see \cite{loftin2013cubic}. Ours is the case in which, thanks to the arrangement of the signs, we can prove many properties of this equation. In particular, the following holds: \begin{lemma}\label{loftinlemma}Let $(M, \tilde{g})$ be a Riemannian compact smooth manifold and let $\tilde{h}$ be a smooth non-negative function on $M$. Then, the equation \begin{equation}\label{loftinequation}
    \Delta_{\tilde g}u+\tilde h(p)e^{-2u}-2e^u+2=0
\end{equation}has a unique smooth solution.
\end{lemma}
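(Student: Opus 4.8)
The plan is to regard \eqref{loftinequation} as a semilinear elliptic problem and to handle uniqueness and existence by separate, standard techniques. Writing $F(p,u) := \tilde{h}(p)e^{-2u} - 2e^u + 2$, the equation reads $\Delta_{\tilde{g}}u + F(p,u) = 0$, and the crucial structural feature is that $\partial_u F = -2\tilde{h}(p)e^{-2u} - 2e^u < 0$ for every $p$, because $\tilde{h} \geq 0$. This strict monotonicity immediately yields uniqueness: if $u_1, u_2$ are two solutions, then at a point $p_0 \in M$ where $u_1 - u_2$ attains its maximum (such a point exists since $M$ is compact) one has $\Delta_{\tilde{g}}(u_1 - u_2)(p_0) \leq 0$, whence $F(p_0,u_2(p_0)) \leq F(p_0,u_1(p_0))$; monotonicity forces $u_1(p_0) \leq u_2(p_0)$, so $u_1 \leq u_2$ on all of $M$, and exchanging the roles of $u_1,u_2$ gives $u_1 = u_2$.

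For existence I would argue by sub- and super-solutions. The decisive point is that the constant term $+2$ in \eqref{loftinequation} is calibrated so that $u_- \equiv 0$ is a subsolution: indeed $\Delta_{\tilde{g}}0 + F(p,0) = \tilde{h}(p) \geq 0$. For a supersolution, take a constant $u_+ \equiv c$ with $c > 0$ large; since $F(p,c) \leq (\max_M \tilde{h})\, e^{-2c} - 2e^c + 2$, which is negative once $c$ is large enough, $u_+$ is a supersolution, and clearly $u_- \leq u_+$. Standard barrier theory on a closed manifold then produces a solution trapped between the two: choosing $K > 0$ so large that $u \mapsto F(p,u) + Ku$ is nondecreasing on the compact range $[0,c]$ (possible since $\partial_u F$ is bounded there), the monotone iteration defined by solving the linear equations $(\Delta_{\tilde{g}} - K)u_{n+1} = -F(p,u_n) - Ku_n$ and using the maximum principle yields a monotone sequence converging to a solution $u$ with $0 \leq u \leq c$. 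Equivalently, one may minimize the energy $E(u) = \int_M \big(\tfrac{1}{2}|\nabla u|^2 + \tfrac{1}{2}\tilde{h}e^{-2u} + 2e^u - 2u\big)\,\dvol_{\tilde{g}}$, whose Euler--Lagrange equation is precisely \eqref{loftinequation}; writing $u = \bar{u} + v$ with $\bar{u}$ its mean and applying Jensen's inequality to $\int e^u$ gives coercivity on $H^1(M)$, and weak lower semicontinuity gives a minimizer.

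Finally, the solution produced is a priori only weak (or merely bounded), so I would close with elliptic regularity: since $F(p,\cdot)$ is smooth and $u$ is bounded, the right-hand side of $\Delta_{\tilde{g}}u = -F(p,u)$ lies in every $L^p$, and Schauder/$W^{2,p}$ bootstrapping upgrades $u$ to $C^\infty$, using that $\tilde{h}$ and $\tilde{g}$ are smooth. The main obstacle in the whole argument is isolating the right barriers in the existence step: because $\tilde{h}$ is only assumed non-negative, a very negative constant need not be a subsolution (at points where $\tilde{h}$ vanishes the term $\tilde{h}e^{-2u}$ cannot help), so one genuinely has to use the $+2$ term to see that $u_- = 0$ works. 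Once the barriers are in place, the monotone scheme, the comparison principle and the regularity theory are entirely routine.
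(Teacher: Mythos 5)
Your proof is correct, but it cannot be ``the same as the paper's'' for a simple reason: the paper never proves Lemma \ref{loftinlemma}. It attributes the result to Loftin, remarking only that his argument simplifies Wang's original one using the theory of elliptic operators between Sobolev spaces, and then moves on. Your argument is therefore a self-contained substitute, and a sound one: uniqueness follows from the maximum principle because $\partial_u\big(\tilde h(p)e^{-2u}-2e^u+2\big)=-2\tilde h(p)e^{-2u}-2e^u$ is strictly negative even at points where $\tilde h$ vanishes (the $-2e^u$ term saves you, as you note); existence follows from the barriers $u_-\equiv 0$ and $u_+\equiv c$ for $c$ large, where you correctly isolate that the constant $+2$ in \eqref{loftinequation} is exactly what makes $0$ a subsolution for arbitrary $\tilde h\ge 0$; and the monotone iteration with the shift $K$, followed by elliptic bootstrapping, is the standard way to produce a smooth solution trapped between the barriers on a closed manifold. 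It is worth observing that the uniqueness half of your argument is precisely the device the paper itself uses in Proposition \ref{prop:sol_torus} for the genus-one specialization (flat metric, $\tilde h = 4|c|^2$ constant, no $+2$ term), where evaluation at maximum and minimum points forces the solution to be the constant $\frac{1}{3}\log\big(2|c|^2\big)$; so your proof can be read as the natural globalization of the one computation the paper actually carries out.

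One caveat on your variational aside: the energy $E$ contains $\int_M e^u$ and $\int_M \tilde h e^{-2u}$, and since the lemma is stated for a compact manifold of arbitrary dimension, finiteness and differentiability of $E$ on $H^1(M)$ (and boundedness of a minimizer, needed to start the bootstrap) require extra work in dimension at least three, e.g.\ truncation arguments; in dimension two everything is fine via Moser--Trudinger. The sub/supersolution route you gave first works uniformly in all dimensions, so it is right that you made it the primary argument and relegated the variational one to an alternative.
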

The proof of this general result is due to Loftin and it simplifies a lot the original one given by Wang, using the theory of elliptic operators between Sobolev Spaces. It must be noted that Lemma \ref{loftinlemma} can be applied to (\ref{semilinearelliptic}) only when we can choose the initial metric $g_0$ so that $k_0=-1$ and this, clearly, can be done if $g\ge 2$. In the case of genus one the proof of existence and uniqueness of the solution for (\ref{semilinearelliptic}) is even simpler and will be covered in Section 2.3 below. At this point, it is clear that we can always solve (\ref{integrabilityequations}) and construct a complete hyperbolic affine $2$-sphere. Let $\pi:\widetilde \Sg\to\Sg$ be the conformal universal covering.\footnote{Notice that $\widetilde{\Sg}$ can always be indentified with a simply-connected domain in $\C$} Given a non-zero holomorphic cubic differential $q$ on $\Sg$, we get by Lemma \ref{loftinlemma} a unique triple $(h, q, -1)$ satisfying (\ref{semilinearelliptic}) on $\Sg$, where $h$ is the Hermitian metric given by $e^{u+\phi}\vl\mathrm{d}z\vl^2$ in local coordinates. Then, by Proposition \ref{semilinearellipticproposition} the triple $(\pi^*h, \pi^*q, -1)$ satisfies (\ref{integrabilityequations}), where $\pi^*h=e^\psi\vl\mathrm{d}z\vl^2$ on $\widetilde \Sg$. Since $\widetilde{\Sg}$ is biholomorphic either to $\C$ or to the open unit disk, by Theorem \ref{integrabilitytheorem} we know that $(\pi^*h, \pi^*q, -1)$ determines a hyperbolic affine $2$-sphere $f:\!\widetilde \Sg\to\R^3$ with $\pi^*h$ as its Blaschke metric and it is complete since $\pi:\!(\widetilde{\Sg}, \pi^*h)\to(\Sg, h)$ is a local isometry and $\Sg$ is compact. Moreover, it can be proved that the deck transformation group of $\Sg$ can be regarded as a discrete subgroup of the unimodular affine group acting on the affine sphere $f:\widetilde \Sg\to\R^3$. This holds because given any $\gamma\in\pi_1(\Sg)$ we have $(\gamma^*\circ\pi^*)h=\pi^*h$ and $(\gamma^*\circ\pi^*)q=\pi^*q$, but the Blaschke metric and the Pick form completely determine the affine sphere up to unimodular affine transformations. Hence, the map which sends the point $f(p)$ to $f(\gamma(p))$, with $p\in \Sg$, must be the restriction of an unimodular affine transformation in $\R^3$. By the standard theory of affine differential geometry it follows that the given construction yields all complete hyperbolic affine $2$-spheres which admit the action of a discrete subgroup of the unimodular affine group in $\R^3$ with compact quotient.

\subsection{The genus one case}

In this section we aim to study equation (\ref{semilinearelliptic}) in the case of genus one. We will prove, by a simple argument, that there exists a unique smooth solution, in fact constant. Moreover, we will show that any properly convex $\RP$-structure over a torus is covered by a triangle. In the following we will use the same notation as in Section 2.2.\newline If the Riemann surface $\Sg=T^{2}$ has genus one, we can pick the initial  metric $g_0$ to be flat, thus it can be written as $g_0=\vl\mathrm{d}z\vl^2$. A holomorphic cubic differential $q$ on $T^2$ is given (globally) by $q=c\mathrm{d}z^3$, with $c\in \C$, hence in this case equation (\ref{semilinearelliptic}) is \begin{equation}\label{semilinearelliptictorus}
\Delta_0u+4\vl c\vl^2e^{-2u}-2e^u=0
\end{equation}
where $u$ is the conformal parameter of the new metric $g$ and $\Delta_0=4\partial_z\partial_{\bar z}$ is the standard Laplacian. Notice that if the holomorphic cubic differential is zero, namely if $c=0$, then we get $\Delta_0u=2e^u$ and by integrating with respect to the volume form of $g_0$ it follows $$\int_{T^2}\Delta_0u \ \mathrm{d} \mu_0=2\int_{T^2}e^u \ \mathrm{d}\mu_0$$ which is not possible since the left hand side of the equation is zero and the right one is strictly positive.
\begin{proposition} \label{prop:sol_torus}
Provided $c\neq 0$, Equation (\ref{semilinearelliptictorus}) has a unique constant solution given by $u=$\sol
\end{proposition}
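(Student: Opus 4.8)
The plan is to solve Equation (\ref{semilinearelliptictorus}) by reducing the problem to finding a constant solution and then proving that every solution must in fact be constant. First I would look for a constant solution $u\equiv k$: if $u$ is constant, then $\Delta_0 u=0$, so the equation collapses to the algebraic relation $4\vl c\vl^2e^{-2k}-2e^{k}=0$, i.e. $e^{3k}=2\vl c\vl^2$, which (since $c\neq 0$) has the unique real root $k=\tfrac{1}{3}\log(2\vl c\vl^2)$. This immediately produces the candidate solution and matches the stated formula.

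The substance of the proof is then the \emph{uniqueness}, and in particular that there are no non-constant solutions. Here I would use the standard maximum-principle argument for such semilinear elliptic equations on a closed manifold. Suppose $u$ is any smooth solution on $T^2$. At a point $p_{\max}$ where $u$ attains its maximum we have $\Delta_0 u(p_{\max})\ge 0$ (with the analyst's sign convention $\Delta_0=4\partial_z\partial_{\bar z}$, one must be careful about the sign; I would fix the convention so that $\Delta_0 u\le 0$ at an interior max), from which the equation forces $4\vl c\vl^2 e^{-2u(p_{\max})}-2e^{u(p_{\max})}\ge 0$, giving $e^{3u(p_{\max})}\le 2\vl c\vl^2$. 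Symmetrically, at a minimum point $p_{\min}$ the reverse inequality yields $e^{3u(p_{\min})}\ge 2\vl c\vl^2$. Combining these gives $u(p_{\max})\le u(p_{\min})$, hence $u$ is constant and equal to the value found above.

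The key obstacle, and the step I expect to require the most care, is getting the sign of the Laplacian right at the extrema so that the two one-sided inequalities point in opposite directions and pinch $u$ to a constant; the monotonicity of the map $t\mapsto 4\vl c\vl^2 e^{-2t}-2e^{t}$ (it is strictly decreasing) is what makes the pinching argument work and is worth recording explicitly. An alternative, cleaner route that avoids discussing extremal points is to test the equation against $u$ itself: multiplying Equation (\ref{semilinearelliptictorus}) by a suitable function and integrating by parts over $T^2$ against the flat volume form $\mathrm{d}\mu_0$ eliminates the Laplacian term, and the strict monotonicity of the nonlinearity forces the solution to be constant. Either way, once uniqueness is established the explicit constant from the first paragraph completes the proof.
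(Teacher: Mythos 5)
Your proof is correct and takes essentially the same route as the paper's: both check that the constant $\tfrac{1}{3}\log(2|c|^2)$ solves the equation, and then pinch an arbitrary solution between the bounds forced at a maximum point (where $\Delta_0 u\le 0$ yields $u\le\tfrac{1}{3}\log(2|c|^2)$) and a minimum point (where the reverse inequality holds). The only blemish is the initial sign slip ``$\Delta_0 u(p_{\max})\ge 0$'', which your own parenthetical correctly repairs: since $\Delta_0=4\partial_z\partial_{\bar z}=\partial_x^2+\partial_y^2$, one has $\Delta_0 u\le 0$ at an interior maximum, and your subsequent inequalities all use this correct convention, so the argument goes through exactly as in the paper.
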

\begin{proof}
It is straightforward to see that \sol satisfies (\ref{semilinearelliptictorus}). Suppose $u$ is any other solution and let $p\in T^2$ be a maximum point for $u$. This implies that $(\Delta_0u)(p)\le 0$, hence $$2e^{u(p)}\le 4\vl c\vl^2e^{-2u(p)} \quad\Longrightarrow\quad u(p)\le\text{\sol}$$ Since $p$ is a point of maximum, we get $$u(x)\le u(p)\le\text{\sol},\quad\forall x\in T^2$$
Arguing in the same way with a point of minimum, we get $u(x)\ge\text{\sol},\forall x\in T^2$ thus the only possibility is that $$u\equiv\text{\sol} \ . $$
\end{proof}
We can already notice a first difference with the case of higher genus, in which the solution to the semi-linear elliptic equation (\ref{semilinearelliptic}) could always be found. On the torus, if on the one hand we have to place restrictions on the possible values of the cubic holomorphic differential, on the other the treatment is considerably simplified. In this case, the function $\psi$ of Theorem $\ref{integrabilitytheorem}$ coincides with the unique solution of $(\ref{semilinearelliptictorus})$. In particular, following the argument explained at the beginning of the previous section, we can rewrite the first order system of ODEs (\ref{ODEsystem}) in the following way

\begin{equation}
\begin{split}
\label{ODEsystemtorus}\begin{aligned}
\frac\partial{\partial z}\begin{pmatrix}
f_z \\ f_{\bar z} \\ f \end{pmatrix}&=\begin{pmatrix} 0 & ce^{-\psi} &0 \\
 0 & 0 & \frac{1}{2}e^{\psi}  \\ 1 & 0 & 0\end{pmatrix}\begin{pmatrix}f_z \\ f_{\bar z} \\ f \end{pmatrix} \\
\frac\partial{\partial \bar z}
\begin{pmatrix} f_z \\ f_{\bar z} \\ f \end{pmatrix}&=
\begin{pmatrix} 0 & 0 & \frac{1}{2}e^{\psi}\\
 \bar c e^{-\psi} & 0 & 0 \\ 0 & 1 & 0 \end{pmatrix}
\begin{pmatrix} f_z \\ f_{\bar z} \\ f \end{pmatrix} \ . 
\end{aligned}
\end{split}
\end{equation}

In a more compact form if $\mathbf{F}^t=(f_z, f_{\bar z}, f)$ and $A,B$ are the $3\times 3$ matrices in the first and second equation respectively, we get $$\begin{cases} \frac\partial{\partial z}\mathbf{F}=A\cdot\mathbf{F} \\ \frac\partial{\partial\bar z}\mathbf{F}=B\cdot\mathbf{F} \ .
\end{cases}$$
As one can easily check, a solution of this system is given by \begin{equation}\label{solutionsystemtorus}\mathbf{F}(z,\bar z)=e^{Az+B\bar z}\cdot C\end{equation}where $C$ is a constant matrix determined by the initial data (see Theorem \ref{integrabilitytheorem}). In order to show that every properly convex $\RP$-structure on $T^2$ is covered by a triangle, we need to compute an explicit solution of (\ref{ODEsystemtorus}), that is to find a formula for the parametrization $f:\C\to\R^3$ of the hyperbolic affine $2$-sphere. Then, using Theorem \ref{thmchengyau} we get that this affine sphere is asymptotic to a cone over a bounded domain $\Omega$, which will turn out to be projectively equivalent to a triangle. The main result of this section is the following:
\begin{theorem}
Let $\Omega/_{\Gamma}$ be a properly convex $\RP$-structure on $T^2$, then $\Omega$ is projectively equivalent to a triangle in $\R^3$ with vertices $\{(1,0,0); (0,1,0); (0,0,1)\}$.
\end{theorem}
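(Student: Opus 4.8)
The plan is to carry out the explicit computation announced just before the statement: solve the ODE system (\ref{ODEsystemtorus}), read off the image of the developing map, and identify the convex cone to which the resulting affine sphere is asymptotic. The decisive simplification, already in hand, is Proposition \ref{prop:sol_torus}: the conformal factor $\psi$ is the \emph{constant} $\frac13\log(2\vl c\vl^2)$. Consequently the matrices $A$ and $B$ in (\ref{ODEsystemtorus}) have constant entries, and the solution is literally the matrix exponential $\mathbf{F}(z,\bar z)=e^{Az+B\bar z}\cdot C$ of (\ref{solutionsystemtorus}), where the bottom row of $\mathbf{F}$ is the sought parametrization $f\colon\C\to\R^3$.

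First I would isolate the key algebraic feature of the pair $(A,B)$. A direct multiplication gives $AB=BA=\tfrac12 e^{\psi}\,\Id$: the would-be off-diagonal terms cancel precisely because the identity $\vl c\vl^2 e^{-2\psi}=\tfrac12 e^{\psi}$ holds, and this identity is exactly equivalent to $\psi=\frac13\log(2\vl c\vl^2)$, i.e.\ to the solution produced by Proposition \ref{prop:sol_torus}. Thus $A$ and $B$ commute, are simultaneously diagonalizable, and satisfy $B=\tfrac12 e^{\psi}A^{-1}$. The characteristic polynomial of $A$ is $-\lambda^3+\tfrac{c}{2}$, so its eigenvalues $\lambda_0,\lambda_1,\lambda_2$ are the three cube roots of $c/2$; the common eigenvectors are $v_k=(\lambda_k,\mu_k,1)^{t}$, and the relation $B=\tfrac12 e^{\psi}A^{-1}$ gives $\mu_k=\overline{\lambda_k}$, the cube roots of $\bar c/2$.

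Next I would extract $f$. Diagonalizing $A=P\Lambda P^{-1}$ and $B=PMP^{-1}$ simultaneously yields $e^{Az+B\bar z}=P\,\diag\!\big(e^{2\Re(\lambda_k z)}\big)\,P^{-1}$, and since every eigenvector has third entry $1$, the third row of $P$ is $(1,1,1)$. Hence $f(z,\bar z)=\sum_{k=0}^{2}e^{2\Re(\lambda_k z)}\,w_k$ for constant vectors $w_k\in\C^3$; reality of $f$ forces the $w_k$ to be real, and nondegeneracy of the immersion forces them to form a basis of $\R^3$. Applying the linear map sending $w_k\mapsto e_k$ — harmless for our purposes, since it induces a projective transformation of $\RP$ and leaves the projective class of $\Omega$ unchanged — puts $f$ in the normal form $f(z,\bar z)=\big(e^{u_0},e^{u_1},e^{u_2}\big)$ with $u_k:=2\Re(\lambda_k z)$. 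Because $\lambda_0+\lambda_1+\lambda_2=0$ one has $u_0+u_1+u_2\equiv0$, so the image lies on the surface $\{xyz=1\}$ inside the open positive octant; and since the $\lambda_k$ sit at $120^\circ$ in $\C$, any two of the functionals $u_k$ are $\R$-linearly independent, so $(u_0,u_1)$ parametrizes all of $\R^2$ and the image is exactly $\{xyz=1,\ x,y,z>0\}$.

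Finally I would identify the asymptotic cone. By Theorem \ref{thmchengyau} this complete hyperbolic affine $2$-sphere is asymptotic to the boundary of the cone it bounds; here that cone is the closed positive octant $\{x,y,z\ge0\}$, whose projectivization is the open triangle with vertices $[1:0:0],[0:1:0],[0:0:1]$. Therefore $\Omega$ is projectively equivalent to this triangle, as claimed. I expect the main obstacle to be the bookkeeping of this last step: checking that the explicit surface is asymptotic to the \emph{full} octant boundary (all three faces) rather than to a proper subcone, and verifying that the normalization $w_k=e_k$ is compatible with the $\Gamma$-equivariance — here $z\mapsto z+\tau$ sends $f$ to $\diag(e^{2\Re(\lambda_k\tau)})\,f$, so the holonomy is diagonal and the whole picture descends to $\Omega/_\Gamma$. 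The identity $AB=BA=\tfrac12 e^{\psi}\,\Id$ is the linchpin that renders every other step routine.
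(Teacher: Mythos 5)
Your proof is correct and takes essentially the same route as the paper: both solve (\ref{ODEsystemtorus}) by simultaneously diagonalizing the constant matrices $A$ and $B$ (same eigenvalues, the cube roots of $c/2$, and the same eigenvectors up to scaling), arrive---after a harmless normalization---at the parametrization $f=\big(e^{2\Ree(\lambda_0 z)},e^{2\Ree(\zeta\lambda_0 z)},e^{2\Ree(\zeta^2\lambda_0 z)}\big)$ of $\{xyw=1,\ x,y,w>0\}$, and conclude via Theorem \ref{thmchengyau} that the asymptotic cone is the first octant, whose projectivization is the stated triangle. The differences are only presentational, and in your favor: the identity $AB=BA=\tfrac{1}{2}e^{\psi}\,\Id$ cleanly justifies the commutativity that the paper merely asserts, and your reality-plus-nondegeneracy argument for the vectors $w_k$ replaces the paper's explicit choice of the initial-data matrix $C$.
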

\begin{proof}
Recall that the holomorphic cubic differential is given by $q=c\mathrm{d}z^3$, with $c\neq0$ and $c=\rho e^{i\theta}$ with $\rho>0$ and $\theta\in\R$. Since to find the solution $\mathbf{F}$ of (\ref{ODEsystemtorus}) we have to compute the exponential of a sum of matrices, the first step is to observe that $Az$ and $B\bar z$ commute. Moreover, we can find a common basis of eigenvectors that diagonalizes them simultaneously, namely there exists an invertible matrix $P$ such that $$Az=PD_{Az}P^{-1}\quad\text{and}\quad B\bar z=PD_{B\bar z}P^{-1}$$ with $D_{Az}, D_{B\bar z}$ diagonal matrices. From this it follows that $$Az+B\bar z=PD_{Az+B\bar z}P^{-1}\quad\Longrightarrow\quad e^{Az+B\bar z}=Pe^{D_{Az+B\bar z}}P^{-1} \ ,$$
where $D_{Az+B\bar z}=D_{Az}+D_{B\bar{z}}$.

A common basis of eigenvectors is given by $$\vec{v}_0=\begin{pmatrix}
e^{i\frac 2{3}\theta} \\ 1 \\ \big(\frac\rho{2}\big)^{-\frac 1{3}}e^{i\frac\theta{3}}\end{pmatrix},\quad\vec{v}_1=\begin{pmatrix}
\zeta^2e^{i\frac 2{3}\theta} \\ 1 \\ \zeta\big(\frac\rho{2}\big)^{-\frac 1{3}}e^{i\frac\theta{3}}\end{pmatrix},\quad \vec{v}_2=\begin{pmatrix}
\zeta e^{i\frac 2{3}\theta} \\ 1 \\ \zeta^2\big(\frac\rho{2}\big)^{-\frac 1{3}}e^{i\frac\theta{3}}\end{pmatrix}$$ with eigenvalues $\{\lambda_0z, \zeta\lambda_0 z, \zeta^2\lambda_0z\}$ for $Az$ and eigenvalues $\{\bar{\lambda}_0\bar z, \zeta^2\bar{\lambda}_0\bar z, \zeta\bar{\lambda}_0\bar z\}$ for $B\bar z$, where $\zeta=e^{i\frac{2\pi}{3}}$ is a 3rd primitive root of unity and $\lambda_0:=\big(\frac\rho{2}\big)^{\frac 1{3}}e^{i\frac\theta{3}}$. Hence, the matrix $P$ is given by $(\vec v_0 \ | \ \vec v_1 \ | \ \vec v_2)$ and the eigenvalues of $Az+B\bar z$ are $\{2\Ree(\lambda_0z), 2\Ree(\lambda_0z\zeta), 2\Ree(\lambda_0z\zeta^2)\}$. At this point, it is easy to compute the matrix $e^{Az+B\bar z}$ and find the parametrization $f$, being it the third row of the solution of the system $\mathbf{F}$. The vector $f=(f_1, f_2, f_3)^t$ we obtain takes values in $\C^3$ and not in $\R^3$ as one might expect. This happens because we still have to make a choice of the initial data. Then, by choosing the following constant matrix $C$ in (\ref{solutionsystemtorus}) 
\begin{equation}
    C=\begin{pmatrix}
    \big(\frac\rho{2}\big)^{\frac 1{3}}e^{i\frac\theta{3}} & \zeta\big(\frac\rho{2}\big)^{\frac 1{3}}e^{i\frac\theta{3}} & \zeta^2\big(\frac\rho{2}\big)^{\frac 1{3}}e^{i\frac\theta{3}} \\ \big(\frac\rho{2}\big)^{\frac 1{3}}e^{-i\frac\theta{3}} & \zeta^2\big(\frac\rho{2}\big)^{\frac 1{3}}e^{-i\frac\theta{3}} & \zeta\big(\frac\rho{2}\big)^{\frac 1{3}}e^{-i\frac\theta{3}} \\ 1 & 1 & 1
    \end{pmatrix}
\end{equation}
we get
\begin{equation}
    f(z,\bar z)=\begin{pmatrix}e^{2\Ree(\lambda_0z)} \\ e^{2\Ree(\zeta\lambda_0z)} \\ e^{2\Ree(\zeta^2\lambda_0z)}
    \end{pmatrix}\in\R^3 \ .
\end{equation}It is a straightforward computation to see that $$\Ree(\lambda_0z)+\Ree(\zeta\lambda_0z)+\Ree(\zeta^2\lambda_0z)=0 ,$$ hence showing that $f$ is a parametrization of the hypersurface $\{(x,y,w)\in\R^3 \ | \ xyw=1, \ x,y,w>0\}$. In particular, the hyperbolic affine $2$-sphere we get is asymptotic to the three coordinate planes in the first octant, which are nothing but the boundary of the cone over the triangle $T$ contained in the plane $\{(x,y,w)\in\R^3 \ | \ x+y+w=1\}$ and with vertices $\{(1,0,0); (0,1,0); (0,0,1)\}$. By Theorem \ref{thmchengyau}, this triangle has to be projectively equivalent to the convex bounded domain $\Omega=p\big(\mathcal{C}(T)\big)=\{[x:y:w]\in\RP \ | \ x,y,w>0\}$ of the initial properly convex $\RP$-structure, where $p:\R^3\setminus\{0\}\to\RP$ is the standard projection.
\end{proof}
\begin{remark}
It must be noted that in the case of genus one, the problem of classifying convex bounded domains $\Omega$, as in Definition \ref{defproperlyconvexstructure}, up to projective transformations and preserved by the action of a discrete subgroup $\Gamma<\SL(3,\R)$ contained in $\Proj(\Omega)$ and isomorphic to $\Z\times\Z$, is equivalent to the problem of classifying flat hyperbolic affine $2$-spheres up to unimodular affine transformations. The latter was the problem studied in \cite{magid1990flat} which we now recovered in terms of properly convex $\RP$-structures over the torus.
\end{remark}
\begin{corollary}\label{cor:cubictorusandproperlyconvex}
There exists a bijection between $\deft$ and the complement of the zero section in $\cubic$. 
\end{corollary}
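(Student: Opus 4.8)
The plan is to realize the bijection concretely as the affine-sphere dictionary, sending a properly convex structure to the conformal class and cubic differential of its associated hyperbolic affine $2$-sphere, and then to invert this by solving Wang's equation. Both the existence and the uniqueness of the constructions have already been established in the preceding sections, so the work is to assemble them and to match up the two equivalence relations.

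First I would define the forward map $\deft \to Q^3_0(\Teich^c(T^2))$. Given a properly convex $\RP$-structure $[(f,\Omega/_{\Gamma})]$, Remark \ref{rmk:correspondencehypaffinesphereandpropconvex} attaches to it a $\Gamma$-equivariant complete hyperbolic affine $2$-sphere in $\R^3$. Its Blaschke metric $h$ is positive definite, hence defines a conformal, and therefore a complex, structure on $T^2$; using the marking $f$ this is a well-defined point $J\in\Teich^c(T^2)$. By Corollary \ref{picktensoraffinesphere} the Pick tensor is totally symmetric, so Theorem \ref{thm:picktensor} expresses it as $C=\Re(q)$ for a cubic holomorphic differential $q$ on $(T^2,J)$. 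I would send the structure to the pair $(J,q)$, which by the discussion after (\ref{semilinearelliptictorus}) must have $q\neq 0$: the vanishing case $c=0$ admits no solution of Wang's equation and corresponds to the Euclidean and affine structures, which are convex but not properly convex. This places $(J,q)$ in the complement of the zero section.

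Next I would construct the inverse. Starting from $(J,q)\in Q^3_0(\Teich^c(T^2))$ with $q=c\,\mathrm{d}z^3$ and $c\neq 0$, Proposition \ref{prop:sol_torus} furnishes the unique (constant) solution $\psi$ of (\ref{semilinearelliptictorus}), so that $(\psi,q,-1)$ satisfies the integrability conditions. Lifting to the universal cover and applying the Integrability Theorem \ref{integrabilitytheorem} produces a complete hyperbolic affine $2$-sphere with Blaschke metric conformal to $J$ and Pick form $\Re(q)$; the equivariance argument following Proposition \ref{semilinearellipticproposition} realizes $\pi_1(T^2)$ as a discrete subgroup of the unimodular affine group, and Theorem \ref{thmchengyau} identifies the asymptotic cone, yielding a properly convex $\RP$-structure (whose domain, by the theorem above, is a triangle). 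I would then verify that the two maps are mutually inverse: in one direction, the uniqueness in Theorem \ref{integrabilitytheorem} — the Blaschke metric together with the Pick form determine the affine sphere up to unimodular affine maps, exactly the ambiguity defining $\deft$ — recovers $(J,q)$; in the other, the uniqueness in Proposition \ref{prop:sol_torus} shows that the metric attached to a given structure is the one produced by the inverse map.

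The main obstacle I anticipate is not analytic but bookkeeping: one must check that the forward map descends to equivalence classes, i.e. that the projective/unimodular ambiguity of the $\RP$-structure is intertwined by the affine-sphere correspondence with the marking defining a point of $\Teich^c(T^2)$, so that the pair $(J,q)$ is genuinely well defined. Once the dictionary ``Blaschke metric plus Pick form $\leftrightarrow$ affine sphere modulo unimodular affine transformations'' is aligned with ``marked complex structure plus cubic differential,'' bijectivity is a formal consequence of the uniqueness statements in Proposition \ref{prop:sol_torus} and Theorem \ref{integrabilitytheorem}.
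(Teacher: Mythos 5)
Your proposal is correct and follows essentially the same route as the paper: the forward map sends a properly convex structure to the pair $(J,q)$ via the Blaschke metric and Pick tensor of its equivariant hyperbolic affine $2$-sphere (Remark \ref{rmk:correspondencehypaffinesphereandpropconvex}, Theorem \ref{thm:picktensor}), and bijectivity follows from the uniqueness statements in Proposition \ref{prop:sol_torus} and Theorem \ref{integrabilitytheorem}, which produce a unique equivariant affine sphere up to unimodular affine transformations. Your additional remarks --- that $q\neq 0$ because Wang's equation has no solution when $c=0$, and that the unimodular-affine ambiguity matches the equivalence relation on $\deft$ --- are points the paper leaves implicit, but the argument is the same.
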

\begin{proof}The above bijection follows from Theorem \ref{thmchengyau}. In fact, by Remark \ref{rmk:correspondencehypaffinesphereandpropconvex} to any $\big[\Omega/_{\Gamma}\big]\in\deft$ we have an associated equivariant hyperbolic affine $2$-sphere $M$ which is determined by its Blaschke metric and the Pick tensor (see Definition \ref{def:picktensor}). Hence, let $\chi:\deft\to Q^3_0(\Teich^c(T^2))$ be the map that associates to each $\big[\Omega/_{\Gamma}\big]$ the pair $(J,q)$ where $J$ is the complex structure induced by the Blaschke metric and $q=c\mathrm{d}z^3$ is a non-zero cubic holomorphic differential whose real part coincides with the Pick tensor of $M$. Since, by Proposition \ref{prop:sol_torus} and Theorem \ref{integrabilitytheorem}, for any such $(J,q)$ we can find a unique (up to unimodular affine transformations) hyperbolic affine $2$-sphere in $\R^{3}$ that is invariant under a subgroup $\Gamma < \SL(3,\R)$ isomorphic to $\pi_{1}(T^{2})$, the map $\chi$ is a bijection. 
\end{proof}
\section{The main results}
The purpose of this section is to present and prove the three results of this paper. First, we define a symplectic form and a complex structure on $\deft$ compatible with a pseudo-Riemannian metric. We then move on to show the existence of Hamiltonian actions of $S^1$ and $\SL(2,\R)$ on $\deft$ and we finally compute their moment maps.
\subsection{The pseudo-K\"ahler structure on \texorpdfstring{$\deft$}{B(T2)}}
We begin by defining the main objects we will use during this section. Let $\rho:=\mathrm{d}x_0\wedge\mathrm{d}y_0$ be the standard area form on $\R^2$ \begin{definition}
The space $\almost$ of $\rho$-compatible linear-complex structures on $\R^2$ is defined as
$$\almost:=\{J\in\End(\R^2) \ | \ J^2=-\mathds{1}, \ \rho(v,Jv)>0 \ \text{for some} \ v\in\R^2\setminus\{0\}\} \ . $$
\end{definition}This space is a $2$-dimensional manifold and it is easy to see that $\forall J\in\almost$, the tensor $g_J(\cdot,\cdot):=\rho(\cdot,J\cdot)$ is a Riemannian metric on $\R^2$, with respect to which $J$ is an orthogonal endomorphism. By differentiating the identity $J^2=-\mathds{1}$, it follows that $$T_J\almost=\{\dot{J}\in\End(\R^2) \ | \ J\dot J+\dot J J=0\} \ . $$ Equivalently, the space $T_J\almost$ can be identified with the trace-less and $g_J$-symmetric endomorphisms of $\R^2$. It carries a natural (almost) complex structure given by
\begin{align*}
\mathcal{I}:T_J&\almost\to T_J\almost \\ &\dot J\mapsto-J\dot J \ . 
\end{align*}
There is a natural scalar product defined on each tangent space 
$$\langle\dot J, \dot J'\rangle_J:=\frac 1{2}\tr(\dot J\dot J')$$for each $\dot J,\dot J'\in\almost$. It is easy to check that $\mathcal{I}$ preserves this scalar product.\begin{lemma}[{\cite[Lemma 3.11]{mazzoli2021parahyperkahler}}]\label{lem:teichmulleralmost}There is a diffeomorphism between $\almost$ and $\Teich^c(T^2)$, which is equivariant with respect to the action of $MCG(T^2)\cong\SL(2,\Z)$
\end{lemma}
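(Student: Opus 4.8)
The plan is to realize both $\almost$ and $\Teich^c(T^2)$ as the symmetric space $\SL(2,\R)/\SO(2)$ and then check that the two natural identifications intertwine the relevant group actions. All the geometry is contained in the conjugation action of $\SL(2,\R)$ on $\almost$, so I would set this up first.

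First I would analyze the conjugation action $P\cdot J:=PJP^{-1}$ on $\almost$. This is well defined: conjugation preserves $J^2=-\mathds{1}$, and since $P$ is area-preserving and orientation-preserving it also preserves the positivity condition defining $\almost$. To prove transitivity, given $J\in\almost$ I would consider the flat metric $g_J=\rho(\cdot,J\cdot)$. A direct computation shows that, in the standard basis, $g_J$ is symmetric, positive-definite and of determinant one, so its area form is exactly $\rho$; hence there exists $P\in\SL(2,\R)$ with $P^{*}g_J=g_{J_0}$, where $J_0=\left(\begin{smallmatrix}0&-1\\1&0\end{smallmatrix}\right)$. Since a metric together with the orientation determines a unique compatible complex structure, this forces $P^{-1}JP=J_0$, i.e. $J=PJ_0P^{-1}$, proving transitivity. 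I would then compute the stabilizer of $J_0$: a matrix in $\SL(2,\R)$ commutes with $J_0$ precisely when it has the form $\left(\begin{smallmatrix}a&-b\\b&a\end{smallmatrix}\right)$ with $a^2+b^2=1$, so the stabilizer is $\SO(2)$. Therefore the orbit map descends to a diffeomorphism $\almost\cong\SL(2,\R)/\SO(2)\cong\Hyp$.

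On the Teichm\"uller side I would invoke the classical fact that every complex structure on $T^2=\R^2/\Z^2$ is isotopic to a unique translation-invariant one, induced by a linear complex structure on $\R^2$. Concretely, turning $\R^2$ into a complex line by means of $J$ and normalizing the first generator of $\Z^2$ to $1$ sends the second generator to a point $\tau$ of the upper half-plane, and $\tau$ is exactly the modulus of the marked elliptic curve. This yields a natural bijection $\almost\to\Teich^c(T^2)$, $J\mapsto\tau$, which under the previous identification is the standard biholomorphism $\SL(2,\R)/\SO(2)\cong\Hyp\cong\Teich^c(T^2)$; smoothness in both directions is immediate from the explicit expression of $\tau$ in terms of the entries of $J$. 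For equivariance I would note that $A\in\SL(2,\Z)\cong MCG(T^2)$ acts on $\Teich^c(T^2)$ by precomposing the marking with the linear diffeomorphism of $T^2$ induced by $A$; on a translation-invariant complex structure this pullback is again linear and equals $A^{-1}JA$, which is precisely the restriction of the conjugation action on $\almost$ to $\SL(2,\Z)$. Hence the bijection is $\SL(2,\Z)$-equivariant.

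The step I expect to require the most care is the realization statement used on the Teichm\"uller side: one must check both that distinct linear complex structures are never isotopic (injectivity) and that no non-linear complex structure escapes the list up to isotopy (surjectivity). Both follow from uniformization on the torus together with the fact that an isotopy of $T^2$ lifts to a $\Z^2$-equivariant isotopy of $\R^2$ fixing the linear part; the genuine bookkeeping is in tracking orientation and marking conventions so that the conjugation action and the marking-change action match on the nose.
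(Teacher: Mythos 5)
Your proof is correct and follows essentially the same route as the cited source: the paper does not prove this lemma itself (it is quoted from Lemma 3.11 of the reference), and your identification of $\almost$ with $\SL(2,\R)/\SO(2)\cong\Hyp$ via the conjugation action, combined with the classical lattice/uniformization description of $\Teich^c(T^2)$, is exactly the argument underlying that reference and the paper's own later discussion (the remark identifying $\Teich^c(T^2)$ with unit-area flat metrics and marked lattices, and the observation that the $\SL(2,\R)$-action on $\almost$ is transitive with stabilizer $\SO(2)$). The only point requiring care is the one you flag: matching the left/right action conventions so that conjugation $AJA^{-1}$ on $\almost$ corresponds to the marking-change action of $\SL(2,\Z)$, which is a bookkeeping issue rather than a gap.
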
\begin{corollary}\label{cor:bijectioncubicpick}
The holomorphic vector bundle $\cubic$ can be identified with the following
\begin{equation}
    D^3(\almost):=\{(J, C)\in \almost\times S_3(\R^2) \ | \ C(J\cdot,J\cdot,J\cdot)=-C(J\cdot,\cdot,\cdot)\}
\end{equation}where $S_3(\R^2)$ is the space of totally-symmetric tri-linear forms on $\R^2$. Moreover if $(J,C)\in\pick$, then \begin{equation}
    C(J\cdot,\cdot,\cdot)=C(\cdot,J\cdot,\cdot)=C(\cdot,\cdot,J\cdot) \ .
\end{equation}
\end{corollary}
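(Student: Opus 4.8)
The plan is to establish the identification fiberwise over the base, using Lemma \ref{lem:teichmulleralmost} to replace $\Teich^c(T^2)$ by $\almost$ and Theorem \ref{thm:picktensor} to translate the phrase ``real part of a cubic holomorphic differential'' into a purely algebraic condition on the symmetric tensor $C$. First I would fix $J\in\almost$ and let $g_J=\rho(\cdot,J\cdot)$ be the associated flat metric, so that the fiber of $\cubic$ over the corresponding point of Teichm\"uller space is the one-complex-dimensional space of holomorphic cubic differentials $q=c\,\mathrm{d}z^3$. The candidate identification is $(J,q)\mapsto(J,\Re(q))$, and the goal is to check that it is a bijection onto $\pick$.

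For this, recall from Theorem \ref{thm:picktensor} that a totally symmetric $C$ is the real part of a cubic holomorphic differential on $(M,J)$ precisely when the endomorphism field $A$, defined by $C(X,Y,Z)=g_J(A(X)Y,Z)$, is trace-free and satisfies $\mathrm{d}^{\widehat\nabla}A=0$, in which case $q=C(\cdot,\cdot,\cdot)+iC(J\cdot,J\cdot,J\cdot)$. On the torus the closedness condition is automatic for a fiber element, since such a differential corresponds to a parallel (constant) tensor for the flat Levi-Civita connection of $g_J$; thus only the trace-free condition survives. The heart of the matter is then the algebraic equivalence, valid among totally symmetric $C$ on $\R^2$:
\begin{equation*}
A(X)\ \text{is trace-free for all } X\quad\Longleftrightarrow\quad C(J\cdot,J\cdot,J\cdot)=-C(J\cdot,\cdot,\cdot).
\end{equation*}
I would prove this without coordinates by using that each $A(X)$ is $g_J$-symmetric and that in dimension two a $g_J$-symmetric endomorphism $B$ is trace-free if and only if $JBJ=B$; combined with the $g_J$-skew-adjointness of $J$ (so $J^*=-J$), this gives $C(X,J\cdot,J\cdot)=-C(X,\cdot,\cdot)$, and substituting $X\mapsto JX$ produces the stated relation. (Alternatively, a short computation in a $g_J$-orthonormal basis with $Je_1=e_2$ reduces both sides to the two identities $C_{122}=-C_{111}$ and $C_{112}=-C_{222}$, which also confirms that the fiber is cut out to real dimension two, matching $\dim_\R\C$.) Injectivity and surjectivity of $(J,q)\mapsto(J,\Re q)$ then follow: the formula $q=C+iC(J\cdot,J\cdot,J\cdot)$ recovers $q$ from $C$, while conversely any $(J,C)\in\pick$ yields, through Theorem \ref{thm:picktensor}, a genuine holomorphic cubic differential whose real part is $C$.

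For the \emph{moreover} part I would argue formally from the defining relation. Since $C$ is totally symmetric, the assignment $(X,Y,Z)\mapsto C(JX,JY,JZ)$ is symmetric in its three arguments; by the defining relation it equals $-C(JX,Y,Z)$, so the latter is likewise invariant under all permutations of $X,Y,Z$. Combining this with the total symmetry of $C$ gives
\begin{equation*}
C(X,JY,Z)=C(JY,X,Z)=C(JX,Y,Z),\qquad C(X,Y,JZ)=C(JZ,Y,X)=C(JX,Y,Z),
\end{equation*}
which is exactly the asserted slot-independence. The main obstacle, modest as it is, lies in the algebraic equivalence above and in checking that the flatness of the torus genuinely kills the differential condition $\mathrm{d}^{\widehat\nabla}A=0$; once these are secured, both the bijection and the \emph{moreover} statement are immediate.
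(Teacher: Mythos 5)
Your proposal is correct, and its core is the same as the paper's: both arguments rest on Theorem \ref{thm:picktensor} together with two-dimensional algebra of the Pick form $A=g_J^{-1}C$. The differences are in organization, and they buy you something. First, you make the pivot an explicit equivalence -- $A(X)$ trace-free for all $X$ if and only if $C(J\cdot,J\cdot,J\cdot)=-C(J\cdot,\cdot,\cdot)$ -- via the characterization that in dimension two a $g_J$-symmetric endomorphism $B$ is trace-free precisely when $JBJ=B$, and you use this equivalence in both directions. The paper runs only the forward computation explicitly (through the anti-commutation $A(JX)J=-JA(JX)$, which is the same fact in disguise) and settles the converse with a bare citation of Theorem \ref{thm:picktensor}, whose hypotheses it does not verify; you verify both of them, trace-freeness from your equivalence and $\mathrm{d}^{\widehat\nabla}A=0$ from the observation that elements of $S_3(\R^2)$ are constant tensors, hence parallel for the flat Levi-Civita connection of $g_J$ -- a step genuinely special to genus one that is worth making explicit. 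Second, your proof of the \emph{moreover} clause is purely formal: the defining relation identifies $-C(J\cdot,\cdot,\cdot)$ with the totally symmetric form $C(J\cdot,J\cdot,J\cdot)$, so $(X,Y,Z)\mapsto C(JX,Y,Z)$ is itself fully symmetric, and the slot-independence follows by permuting arguments; the paper instead computes with $A(X)Y=A(Y)X$ (Proposition \ref{prop:pickform}), anti-commutation, and $g_J$-orthogonality of $J$. Your route is shorter, avoids the metric and the Pick form entirely, and makes transparent that this part of the statement is a consequence of the defining relation of $\pick$ alone; what the paper's route buys is uniformity, since it exercises the same Pick-form calculus that is reused throughout Section 3.
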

\begin{proof}
If $J\in\almost$ and $q$ is a cubic $J$-holomorphic differential, then $C=\Ree(q)$ is a totally-symmetric tri-linear form on $\R^2$ by Theorem \ref{thm:picktensor}. In particular, $$C(X,Y,Z)=g_J(A(X)Y,Z),\qquad\forall X,Y,Z\in\Gamma(T\R^2)$$ where $A\in\End(\R^2)\otimes T^*\R^2$ is the Pick form and it satisfies the properties in Proposition \ref{prop:pickform}. Hence, for all $X,Y,Z\in\Gamma(T\R^2)$, we have
\begin{align*}
    C(JX,JY,JZ)&=-g_J(JA(JX)Y,JZ) \tag{$A(JX)\in T_J\almost$} \\ &=-g_J(A(JX)Y,Z) \tag{$J \ \text{is} \ g_J-\text{orthogonal}$} \\ &=-C(JX,Y,Z) \ .
\end{align*}

We conclude that $(J, C=\Ree(q))\in D^3(\almost)$. Conversely, if $(J, C)\in D^3(\almost)$, then $q=C(\cdot,\cdot,\cdot)-iC(J\cdot,\cdot,\cdot)$ defines a cubic holomorphic differential by Theorem \ref{thm:picktensor}. Finally, \begin{align*}
    C(JX,Y,Z)&=g_J(A(JX)Y,Z) \\ &=-g_J(A(JX)JY,JZ)  \tag{$A(JX)\in T_J\almost$} \\ &=-g_J(A(JY)JX,JZ) \tag{Proposition \ref{prop:pickform}} \\&=g_J(JA(JY)X,JZ) \\&=g_J(A(X)JY,Z) \\&=C(X,JY,Z)
\end{align*}for all $X,Y,Z\in\Gamma(T\R^2)$. A similar computation shows that $C(\cdot,J\cdot,\cdot)=C(\cdot,\cdot,J\cdot)$.
\end{proof}
\begin{remark}
Notice that, thanks to relation (\ref{picktensorandpickform}), the space $\pick$ can be interpreted in terms of the Pick form $A$, namely it is formed by all possible pair $(J, A)$ with $J\in\almost$ and $A\in\End(\R^2)\otimes T^*\R^2$ such that:\begin{itemize}
    \item[(a)]$A(X)Y=A(Y)X, \quad\forall X,Y\in\Gamma(T\R^2)$;
    \item[(b)] the endomorphism $A(X)$ is $g_J$-symmetric and trace-less for all vector field $X$. In particular, $A(X)\in T_J\almost$
\end{itemize}We will make repetitive use of this correspondence, using the Pick tensor or the Pick form whichever is more convenient.
\end{remark}
It is clear that $\pick$ can be interpreted as a vector bundle over $\almost$, whose fiber at a point $J\in\almost$ is a two dimensional real vector space, denoted by $\pick_J$. Hence, we can introduce a scalar product on this vector space by \begin{equation}\label{def:scalarproductpick}
    \langle A,B\rangle_J:=\int_{T^2}\tr(A\wedge\ast B)
\end{equation}where $\ast$ is the Hodge-star operator with respect to $g_J$. We can give a more precise description of this scalar product: let $J_0\in\almost$ be the standard linear complex structure, namely \begin{equation*}J_0=\begin{pmatrix}
0 & -1 \\ 1 & 0
\end{pmatrix}\end{equation*} then, $g_{J_0}(\cdot, \cdot)$ is the Euclidean metric on $\R^2$ and the standard basis $\{\frac\partial{\partial x_0}, \frac\partial{\partial y_0}\}$ is such that $J_0(\frac\partial{\partial x_0})=\frac\partial{\partial y_0}, J_0(\frac\partial{\partial y_0})=-\frac\partial{\partial x_0}$. Similarly, for any other $J \in \almost$, we can find a $g_J$-orthonormal basis $\{e_1, e_2\}$ such that $Je_1=e_2, \ Je_2=-e_1$. Then, if $\{e_1^*, e_2^*\}$ is the dual basis and $A,B\in\pick_J$, we have $A=A_1e_1^*+A_2e_2^*, \ B=B_1e_1^*+B_2e_2^*$ for some $2\times 2$ matrices $A_{i}$ and $B_{i}$. Hence $\ast B=B_1e_2^*-B_2e_1^*$. In the end, the scalar product is given by $$\langle A, B\rangle_J=\int_{T^2}\tr(A_1B_1+A_2B_2)e_1^*\wedge e_2^* \ . $$ 
\begin{remark}
We will be assuming that the area of the torus for the flat metric $g_{J}$ is equal to $1$, in other words
$$\int_{T^2}e_1^*\wedge e_2^*=1 \ . $$
Indeed there is an equivalent description of $\Teich^c(T^2)$ as the space of isotopy classes of unit-area flat metrics on $T^2$. This can be seen thanks to the isomorphism $\Teich^c(T^2)\cong\almost$ presented in Lemma \ref{lem:teichmulleralmost}. The space $\almost$ can be interpreted as the space of all orientation preserving linear maps $\R^2\to\R^2$ up to rotation and/or dilation. This is equivalent to classify all possible marked lattices in $\R^2$ up to Euclidean isometries and homotheties. Since we can always, up to homotheties, choose a marked lattice of unit area, it follows we can always find a $J\in[J']\in\almost$ with the above property. For a more detailed discussion see \cite[\S 10.2]{farb2011primer}.
\end{remark}
\begin{lemma}\label{lem:tangentpickform}
Let $(J,A)\in\pick$. Then, an element $(\dot J,\dot A)\in T_J\almost\times(\End(\R^2)\otimes T^*\R^2)$ belongs to $T_{(J,A)}\pick$ if and only if \begin{equation}\label{tracedotA}
    \dot J\in T_J\almost,\quad\dot A_0\in\pick_J,\quad \tr\dot A(X)=\tr(\dot JJA(X))=\tr(JA(X)\dot J)
\end{equation}for each $X\in\Gamma(T\R^2)$, where $\dot A:=g_J^{-1}\dot C$ denotes the unique element $\dot A \in \End(\R^2)\otimes T^*\R^2$ such that $g_{J}(\dot A(X)Y,Z)=\dot C(X,Y,Z)$ for all $X,Y,Z \in \Gamma(T\R^{2})$ and $\dot A_0$ is its trace-free part.
\end{lemma}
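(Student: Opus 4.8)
The plan is to view $\pick$ as a level set inside the trivial bundle $\almost\times S_3(\R^2)$ and to read off $T_{(J,A)}\pick$ by linearising its defining equation. Since every element of $S_3(\R^2)$ is already totally symmetric, for such a $C$ the condition $C(J\cdot,J\cdot,J\cdot)=-C(J\cdot,\cdot,\cdot)$ of Corollary~\ref{cor:bijectioncubicpick} is equivalent to the vanishing of the $g_J$-trace $g_J^{ab}C_{abX}$; in a $g_J$-orthonormal frame this is an elementary check, and it identifies $\pick_J$ with the space of $g_J$-trace-free totally symmetric $3$-tensors. Hence $\pick$ is the zero set of the smooth bundle map $\Phi$ sending $(J,C)$ to the $g_J$-trace $g_J^{ab}C_{ab\,\cdot}\in(\R^2)^*$. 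A curve in the ambient bundle has velocity $(\dot J,\dot C)$ with $\dot J\in T_J\almost$ and $\dot C\in S_3(\R^2)$; setting $\dot A:=g_J^{-1}\dot C$, the total symmetry of $\dot C$ forces $\dot A(X)$ to be $g_J$-symmetric with $\dot A(X)Y=\dot A(Y)X$, so its trace-free part $\dot A_0(X)=\dot A(X)-\tfrac12\tr\dot A(X)\,\Id$ takes values in $T_J\almost$. These observations account for the first two requirements in \eqref{tracedotA}, which are structural and automatic; the real content is the third.

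To obtain the trace identity I would differentiate the constraint $g_{J_t}^{ab}C_{t,abX}=0$ along a curve contained in $\pick$. Using $\tfrac{d}{dt}g_{J_t}^{-1}=-g_J^{-1}\big(\tfrac{d}{dt}g_{J_t}\big)g_J^{-1}$, the only nontrivial ingredient is the variation of the metric. From $g_J(\cdot,\cdot)=\rho(\cdot,J\cdot)$ one computes $\tfrac{d}{dt}g_{J_t}(u,v)=\rho(u,\dot J v)=g_J\big(u,(\dot J J)v\big)$, because $J^{-1}=-J$ and $\dot J J=-J\dot J$; equivalently the endomorphism $g_J^{-1}\big(\tfrac{d}{dt}g_{J_t}\big)$ equals $\dot J J$. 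Combining this with the identity $g_J^{-1}C(\cdot,\cdot,X)=A(X)$, which is exactly the total symmetry of $C$ written in endomorphism form, the linearised constraint collapses to $\tr\dot A(X)=\tr\big((\dot J J)A(X)\big)=\tr(\dot J J A(X))$ for all $X$, and cyclicity of the trace rewrites the right-hand side as $\tr(JA(X)\dot J)$. This is precisely the last condition in \eqref{tracedotA}.

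For sufficiency I would argue by dimension. The ambient velocity space $T_J\almost\times S_3(\R^2)$ has dimension $2+4=6$, the $1$-form identity $\tr\dot A(X)=\tr(\dot J J A(X))$ imposes exactly two independent scalar conditions, and $\dim\pick=2+2=4$; hence the conditions in \eqref{tracedotA} are not only necessary but cut out the full tangent space. To realise a given admissible $(\dot J,\dot A)$ by an actual curve, one exponentiates $\dot J$ to a path $J_t\in\almost$ and takes the $S_3(\R^2)$-path $C_t$ whose derivative is $\dot C=g_J\dot A$ and which is corrected so that $g_{J_t}^{ab}C_{t,abX}=0$; the first-order term of this correction is governed precisely by the trace identity, so $(\dot J,\dot A)$ is tangent to $\pick$.

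The step I expect to be the main obstacle is the variation-of-metric computation together with its index bookkeeping, and in particular the subtlety that $\dot A:=g_J^{-1}\dot C$ is defined through the \emph{fixed} metric $g_J$, so it is not the naive derivative of the Pick form along the curve. It is exactly the extra term produced by $\tfrac{d}{dt}g_{J_t}^{-1}$ that manufactures the right-hand side $\tr(\dot J J A(X))$; keeping the signs straight in $\rho(\cdot,\dot J\cdot)=g_J(\cdot,(\dot J J)\cdot)$ and in the anticommutation $\dot J J=-J\dot J$, and correctly identifying $g_J^{-1}C(\cdot,\cdot,X)$ with $A(X)$, are the delicate points that force the two a priori different traces to coincide.
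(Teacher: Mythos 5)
Your proposal is correct, and its computational core is identical to the paper's proof: both derive the trace identity by differentiating the trace-free condition $\tr(A_t(X))=0$ along a curve in $\pick$, using the first variation $\dot g_J=-g_J(\cdot,J\dot J\cdot)$, so that the endomorphism $g_J^{-1}\dot g_J$ equals $\dot JJ$ and $\tr(\dot A(X))=\tr(\dot JJA(X))=\tr(JA(X)\dot J)$ by cyclicity of the trace. The difference is one of scope. The paper's proof consists of exactly this necessity computation and nothing else: the first two conditions in \eqref{tracedotA} are treated as structural, and the sufficiency (``if'') direction is never addressed. You instead package the argument as a level-set computation: you first observe that, for a totally symmetric $C$, the fiber condition $C(J\cdot,J\cdot,J\cdot)=-C(J\cdot,\cdot,\cdot)$ of Corollary \ref{cor:bijectioncubicpick} is equivalent to the vanishing of the $g_J$-trace of $C$ (correct, and this is also what justifies the paper's unstated use of trace-freeness as \emph{the} defining constraint), and you then recover the converse by a submersion/dimension count: the linearized constraint gives two independent scalar conditions on the six-dimensional space $T_J\almost\times S_3(\R^2)$, so its kernel is four-dimensional and must coincide with $T_{(J,A)}\pick$. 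This closes a direction the paper leaves implicit, at the modest cost of verifying surjectivity of the vertical trace map $S_3(\R^2)\to(\R^2)^*$, which you assert rather than prove; it follows in one line since the trace of the symmetrized product of $g_J$ with a one-form $\beta$ is $4\beta$ in dimension two. Finally, your weak reading of ``$\dot A_0\in\pick_J$'' --- namely that each $\dot A_0(X)$ is trace-free and $g_J$-symmetric, i.e.\ lies in $T_J\almost$ --- is the right one: $\dot A_0$ need not satisfy the full symmetry $\dot A_0(X)Y=\dot A_0(Y)X$ of an actual Pick form, and the weaker property is all the paper uses later (e.g.\ in the proof of Theorem \ref{thm:pseudokahlerpick}).
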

\begin{proof}
First notice that \begin{equation}\label{dotmetric}
    \dot g_J=\rho(\cdot,\dot J\cdot)=-\rho(\cdot,J^2\dot J\cdot)=-g_J(\cdot,J\dot J\cdot) \ .
\end{equation}
Then, since $A(X)=(g_{J}^{-1}C)(X)$ and the endomorphisms $A(X)$ are trace-free for each vector field $X$, we get $$0=\tr(A(X))'=\tr((g_{J}^{-1}C)(X))'=-\tr(g_J^{-1}\dot g_J g_J^{-1}C(X))+\tr(\dot A(X)) \ . $$ 
Using equation (\ref{dotmetric}) we obtain $$\tr(\dot A(X))=-\tr(J\dot JA(X))=\tr(\dot JJA(X)) \ . $$
\end{proof}
The group $\SL(2,\R)$ acts on $\almost$ by conjugation and more generally on its tangent space by
$$(J,\dot J)\in T\almost, \quad P\cdot (J,\dot J):=(PJP^{-1}, P\dot JP^{-1})$$ with $P\in\SL(2,\R)$.
\begin{lemma}
There is an $\SL(2,\R)$ action on $\pick$ given by: $$P\cdot (J,A):=(PJP^{-1}, PA(P^{-1}\cdot)P^{-1})$$ where $P\in\SL(2,\R)$ and $A(P^{-1}\cdot)$ has to be interpreted as the action of $P^{-1}$ by pull-back on the one-form part of $A$.
\end{lemma}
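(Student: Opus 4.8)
The plan is to verify two separate things: first, that the formula $P\cdot(J,A):=(PJP^{-1},PA(P^{-1}\cdot)P^{-1})$ genuinely maps a point of $\pick$ to another point of $\pick$, i.e. that the image again satisfies conditions (a) and (b) of the Remark following Corollary \ref{cor:bijectioncubicpick}; and second, that the formula obeys the axioms of a (left) group action. Throughout I write $J':=PJP^{-1}$ and $A'(X):=PA(P^{-1}X)P^{-1}$. The one preliminary computation that drives everything is the behaviour of the metric $g_J$ under conjugation. Since $P\in\SL(2,\R)$ has $\det P=1$, it preserves the area form, $\rho(Pv,Pw)=\rho(v,w)$, and this invariance is the single point at which unimodularity of $P$ is used.

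First I would check that $J'\in\almost$ and record the transformation rule for the metric. One has $(J')^2=PJ^2P^{-1}=-\mathds{1}$, and using $P$-invariance of $\rho$, $\rho(v,J'v)=\rho(P^{-1}v,JP^{-1}v)$, which is positive for a suitable $v$ because $J\in\almost$. The same invariance yields
\begin{equation*}
    g_{J'}(v,w)=\rho(v,PJP^{-1}w)=\rho(P^{-1}v,JP^{-1}w)=g_J(P^{-1}v,P^{-1}w),
\end{equation*}
which I will use repeatedly. With this in hand the membership $(J',A')\in\pick$ is a direct check. Symmetry (a) follows from that of $A$, since $A'(X)Y=PA(P^{-1}X)P^{-1}Y=PA(P^{-1}Y)P^{-1}X=A'(Y)X$. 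Each $A'(X)$ is trace-free, as $\tr(PA(P^{-1}X)P^{-1})=\tr A(P^{-1}X)=0$, and it is $g_{J'}$-symmetric because, combining the metric rule with the $g_J$-symmetry of $A(P^{-1}X)$,
\begin{equation*}
    g_{J'}(A'(X)Y,Z)=g_J(A(P^{-1}X)P^{-1}Y,P^{-1}Z)=g_J(P^{-1}Y,A(P^{-1}X)P^{-1}Z)=g_{J'}(Y,A'(X)Z).
\end{equation*}
Hence each $A'(X)$ lies in $T_{J'}\almost$, so condition (b) holds and $(J',A')\in\pick$. Equivalently, on the Pick tensor side the same computation gives $C'=(P^{-1})^*C$, the pull-back of $C$ by $P^{-1}$, which is manifestly totally symmetric and satisfies the defining $D^3$-condition at $J'$.

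Next I would verify the group axioms. The identity acts trivially since $\Id\cdot(J,A)=(J,A)$. For composition one computes $((PQ)\cdot A)(X)=PQ\,A(Q^{-1}P^{-1}X)\,Q^{-1}P^{-1}$, which coincides with $(P\cdot(Q\cdot A))(X)$; together with $(PQ)J(PQ)^{-1}=P(QJQ^{-1})P^{-1}$ this yields $(PQ)\cdot(J,A)=P\cdot(Q\cdot(J,A))$. The contravariance of the pull-back on the one-form slot is precisely what makes this a left action rather than a right one.

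I do not expect a serious obstacle: the entire content is bookkeeping about where the metric $g_J$ enters. The one genuinely load-bearing subtlety is that the $g_J$-symmetry demanded by condition (b) is phrased relative to a metric that itself varies along the orbit, so the verification only closes up because $P$ preserves $\rho$ and consequently $g_{J'}=(P^{-1})^*g_J$. Were $P$ allowed to lie in $\GL(2,\R)$ rather than $\SL(2,\R)$, the area form and hence the metric would rescale, and the trace-free and symmetry conditions defining $\pick$ would fail to be preserved; this is the remark I would emphasize at the end.
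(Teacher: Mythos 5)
Your proof is correct, but it takes a genuinely different route from the paper's. The paper does not verify well-definedness or the action axioms directly: it \emph{derives} the formula by transporting the already-known $\SL(2,\R)$-action $P\cdot(J,q)=(PJP^{-1},(P^{-1})^*q)$ on $\cubic$ through the bijection of Corollary \ref{cor:bijectioncubicpick}. Concretely, it computes that the Pick tensor $\widetilde C=\Ree\big((P^{-1})^*q\big)=C(P^{-1}\cdot,P^{-1}\cdot,P^{-1}\cdot)$ satisfies
\begin{equation*}
\widetilde C(X,Y,Z)=g_J\big(A(P^{-1}X)P^{-1}Y,P^{-1}Z\big)=g_{PJP^{-1}}\big(PA(P^{-1}X)P^{-1}Y,Z\big),
\end{equation*}
so that the Pick form attached to $(PJP^{-1},\widetilde C)$ is exactly $PA(P^{-1}\cdot)P^{-1}$; membership in $\pick$ and the group axioms are then inherited for free from the action on $\cubic$. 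You instead check by hand that the formula preserves conditions (a) and (b) and obeys the action axioms. Both arguments pivot on the same key identity $g_{PJP^{-1}}=g_J(P^{-1}\cdot,P^{-1}\cdot)$ coming from $\rho$-invariance of unimodular maps. What the paper's route buys is precisely the compatibility of this action with the pull-back action on cubic differentials, which is what is actually used later (e.g.\ in Theorem C, where the moment map is computed for the action originally defined on $\cubic$); your route proves the literal statement more self-containedly and more elementarily, and you recover the compatibility in passing through the observation $C'=(P^{-1})^*C$.

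One side remark in your proposal is wrong, though it does not damage the proof itself. You claim that for $P\in\GL(2,\R)$ ``the trace-free and symmetry conditions defining $\pick$ would fail to be preserved.'' They would not: trace-freeness of $PA(P^{-1}X)P^{-1}$ is invariant under conjugation by \emph{any} invertible $P$, and if $\det P=\lambda>0$ then $g_{PJP^{-1}}=\lambda\, g_J(P^{-1}\cdot,P^{-1}\cdot)$, so the scalar $\lambda$ cancels on both sides of the $g_{J'}$-symmetry computation; hence the very same formula defines an action of all of $\GL^{+}(2,\R)$ on $\pick$ (only orientation-reversing $P$ are excluded, since then $PJP^{-1}\notin\almost$). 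What unimodularity genuinely buys is the exact identity $g_{PJP^{-1}}=g_J(P^{-1}\cdot,P^{-1}\cdot)$ with no scalar factor, and therefore the agreement of the conjugation formula with the pull-back action on Pick tensors: for $\det P=\lambda$ the Pick form of $(P^{-1})^*C$ at $PJP^{-1}$ is $\lambda^{-1}PA(P^{-1}\cdot)P^{-1}$, which matches the lemma's formula only when $\lambda=1$.
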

\begin{proof}
In Section 1.4 we introduced an $\SL(2,\R)$ action on $\cubic$ given by $$P\cdot(J,q)=(PJP^{-1}, (P^{-1})^*q)$$with $P\in\SL(2,\R)$ and $(J,q)\in\cubic$. We need to understand how this action transforms under the bijection of Corollary \ref{cor:bijectioncubicpick}. The new cubic  holomorphic differential $(P^{-1})^*q$ corresponds to the new Pick tensor $\widetilde C=\Ree\big((P^{-1})^*q\big)=C(P^{-1}\cdot, P^{-1}\cdot, P^{-1}\cdot)$ which is given by \begin{align*}\widetilde C(X,Y,Z)&=C(P^{-1}X, P^{-1}Y, P^{-1}Z) \\ &=g_{J}(A\big(P^{-1}X\big)P^{-1}Y, P^{-1}Z) \\&=\rho(PA\big(P^{-1}X\big)P^{-1}Y, PJP^{-1}Z) \tag{$P\in\SL(2,\R)$} \\ &=g_{P\cdot J}(PA\big(P^{-1}X\big)P^{-1}Y, Z) \ . \end{align*}
Hence, the new Pick form $\widetilde A$ defined by $$\widetilde C(X,Y,Z)=g_{P\cdot J}(\widetilde A(X)Y, Z)$$ is exactly $\widetilde A=PA(P^{-1}\cdot)P^{-1} \ . $
\end{proof}
\begin{lemma}\label{lem:invarianza}
For every $P\in\SL(2,\R)$ and $J\in\almost$, we have
\begin{align*}
&\langle P\cdot\dot J,P\cdot\dot J'\rangle_{P\cdot J}=\langle\dot J,\dot J'\rangle_J \\ &\langle P\cdot A,P\cdot B\rangle_{P\cdot J}=\langle A,B\rangle_J\end{align*} where $\dot J,\dot J'\in T_J\almost$ and $A,B\in\pick_J$.
\end{lemma}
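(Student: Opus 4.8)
The plan is to treat the two identities separately, since the first is essentially formal while the second requires understanding how the Hodge star interacts with the action. For the first identity, I would observe that the scalar product $\langle\dot J,\dot J'\rangle_J=\frac{1}{2}\tr(\dot J\dot J')$ is given by a formula that does not reference the basepoint $J$ beyond specifying the tangent space to which $\dot J,\dot J'$ belong. Since $P$ acts by conjugation, $P\cdot\dot J=P\dot JP^{-1}$, the cyclic invariance of the trace gives immediately $\frac{1}{2}\tr(P\dot JP^{-1}\,P\dot J'P^{-1})=\frac{1}{2}\tr(\dot J\dot J')$, which is the claim.

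For the second identity the key preliminary observation is that $P$ is an orientation-preserving isometry from $(\R^2,g_J)$ to $(\R^2,g_{P\cdot J})$. Indeed, since $\det P=1$ the linear map $P$ preserves the area form $\rho$, so that $g_{P\cdot J}(Pv,Pw)=\rho(Pv,PJP^{-1}Pw)=\rho(Pv,PJw)=\rho(v,Jw)=g_J(v,w)$, and $P$ preserves orientation because $\det P>0$. Consequently, if $\{e_1,e_2\}$ is a $g_J$-orthonormal basis adapted to $J$ (i.e. $Je_1=e_2$, $Je_2=-e_1$), then $\{Pe_1,Pe_2\}$ is the $g_{P\cdot J}$-orthonormal basis adapted to $P\cdot J$, with dual frame $\{(Pe_1)^*,(Pe_2)^*\}$.

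Next I would compute the transformation of the component matrices. Writing $A=A_1e_1^*+A_2e_2^*$ and evaluating the one-form part of $P\cdot A=PA(P^{-1}\cdot)P^{-1}$ on the transported frame, one finds $(P\cdot A)(Pe_j)=PA(e_j)P^{-1}=PA_jP^{-1}$; hence the components of $P\cdot A$ in the frame $\{Pe_1,Pe_2\}$ are $\widetilde A_j=PA_jP^{-1}$, and similarly $\widetilde B_j=PB_jP^{-1}$. Because $\{Pe_1,Pe_2\}$ is the adapted orthonormal frame for $g_{P\cdot J}$, the Hodge star retains the same expression in this frame, so the explicit formula for the scalar product reads $\langle P\cdot A,P\cdot B\rangle_{P\cdot J}=\int_{T^2}\tr(\widetilde A_1\widetilde B_1+\widetilde A_2\widetilde B_2)\,(Pe_1)^*\wedge(Pe_2)^*$. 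Cyclicity of the trace yields $\tr(\widetilde A_j\widetilde B_j)=\tr(A_jB_j)$, while the isometry $P$ carries the unit-area volume form of $g_J$ to that of $g_{P\cdot J}$; invoking the normalization $\int_{T^2}e_1^*\wedge e_2^*=1$ for both metrics then gives $\langle P\cdot A,P\cdot B\rangle_{P\cdot J}=\tr(A_1B_1+A_2B_2)=\langle A,B\rangle_J$.

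The main obstacle is the bookkeeping around the Hodge star: one must check that the change of base metric $g_J\mapsto g_{P\cdot J}$ is exactly compensated by the transformation of $A$ and $B$. The clean way to see this, which I would emphasize, is precisely the observation that $P$ is an orientation-preserving isometry between the two metrics, so that the entire integrand is transported isometrically; everything else then reduces to cyclicity of the trace and the unit-area normalization.
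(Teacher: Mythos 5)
Your proof is correct and follows essentially the same route as the paper: the first identity by cyclicity of the trace, and the second by expressing $P\cdot A$, $P\cdot B$ in the frame adapted to $P\cdot J$ (components conjugated by $P$) and using $\det P=1$ to see that the area form is unchanged. The only difference is that you make explicit the observation that $P$ is an orientation-preserving isometry from $(\R^2,g_J)$ to $(\R^2,g_{P\cdot J})$, which the paper's computation leaves implicit.
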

\begin{proof}
For the action on $T_J\almost$, we have
\begin{align*}
    \langle P\cdot\dot J,P\cdot\dot J'\rangle_{P\cdot J}&=\frac 1{2}\tr(P\dot J\dot J'P^{-1}) \\&=\frac 1{2}\tr(\dot J\dot J') \tag{\text{trace symmetry}} \\ &=\langle\dot J,\dot J'\rangle_J \ .
\end{align*}For the action on $\pick_J$, we have
\begin{align*}
    \langle P\cdot A,P\cdot B\rangle_{P\cdot J}&=\int_{T^2}\tr(PA_1B_1P^{-1}+PA_2B_2P^{-1})(P^{-1})^*(e_1^*\wedge e_2^*)\\&=\int_{T^2}\tr (P(A_1 B_1+A_2B_2)P^{-1})e_1^*\wedge e_2^* \tag{$P\in\SL(2,\R)$}\\&=\langle A,B\rangle_J \tag{\text{trace symmetry}} \ .
\end{align*}
\end{proof}
The $\SL(2,\R)$-action on $\pick$ can be differentiated, hence we get a linear isomorphism between $T_{(J,A)}\pick$ and $T_{P\cdot (J,A)}\pick$, which is given explicitly by
$$P\cdot (\dot J,\dot A)=(P\dot JP^{-1}, P\dot A(P^{-1}\cdot)P^{-1})\\$$ where $(\dot J,\dot A)\in T_{(J,A)}\pick$ and $P\in\SL(2,\R)$. Moreover, all the conditions in Lemma \ref{lem:tangentpickform} are $\SL(2,\R)$-invariant.\newline We can define a similar scalar product on pairs $\dot A,\dot B$ by $$\langle\dot A, \dot B\rangle_J:=\int_{T^2}\tr(\dot A\wedge\ast\dot B)$$which is $\SL(2,\R)$-invariant as well. \newline In the following we will denote with $\vl\vl\cdot\vl\vl_J=\vl\vl\cdot\vl\vl$ the norm induced by the scalar product $\langle\cdot,\cdot\rangle_J=\langle\cdot,\cdot\rangle$ and it will be clear from the context which one we are using. In order to simplify the notation we define $\normpick:=\frac{1}{4}\vl\vl A\vl\vl_J^2$. Finally, since $A$ and $\dot A$ are both elements of $\End(\R^2)\otimes T^*\R^2$ it makes sense to consider their trace and trace-free part, namely if $A=A_1e_1^*+A_2e_2^*$ and $\dot A=\dot A_1e_1^*+A_2e_2^*$, then: \begin{align*}&A_0=(A_1)_0e_1^*+(A_2)_0e_2^*, \qquad A_{\tr}=\frac 1{2}\tr(A_1)\mathds{1}e_1^*+\frac 1{2}\tr(A_2)\mathds{1}e_2^*, \\ &\dot A_0=(\dot A_1)_0e_1^*+(\dot A_2)_0e_2^*, \qquad \dot A_{\tr}=\frac 1{2}\tr(\dot A_1)\mathds{1}e_1^*+\frac 1{2}\tr(\dot A_2)\mathds{1}e_2^* \ . 
\end{align*} \newline Now we have all the definitions and results to define the pseudo-Riemannian metric on $\pick$.
\begin{definition}\label{def:pseudoriemannian}
Let $f:[0,+\infty)\to(-\infty,0]$ be a smooth function such that:\begin{itemize}
    \item[(i)] $f'(t)<0, \quad\forall t\ge 0$
    \item[(ii)]$\lim_{t\to+\infty}f(t)=-\infty\ . $
\end{itemize}Then, we define the following symmetric bi-linear form on $T_{(J,A)}\pick$\begin{equation}\begin{aligned}\label{pseudoriemannianmetric}
    \g_{(J,A)}\big((\dot J,\dot A); (\dot J',\dot A')\big):=\big(1-f(\normpick)\big)\langle\dot J,\dot J'\rangle&+\frac{f'(\normpick)}{3}\langle \dot A_0, \dot A_0'\rangle \\ &-\frac{f'(\normpick)}{6}\langle \dot A_{\tr}, \dot A_{\tr}'\rangle 
\end{aligned}\end{equation}and the endomorphism $\i$ of $T_{(J,A)}\pick$ \begin{equation}
    \i_{(J,A)}(\dot J, \dot A):=(-J\dot J, -\dot AJ-A\dot J)
\end{equation}where the products $\dot A J$ and $A\dot J$ have to be interpreted as a matrix multiplication.
\end{definition}Matching these two objects together we get the following form: $$\ome(\cdot,\cdot)=\g(\cdot,\i\cdot)$$which is given by:\begin{equation}\begin{aligned}\label{symplecticform}\ome_{(J,A)}\big((\dot J, \dot A); (\dot J', \dot A')\big)=\big(f(\normpick)-1\big)\langle\dot J, J\dot J\rangle&-\frac{f'(\normpick)}{6}\langle\dot A_{\tr}, \ast\dot A_{\tr}'\rangle \\ &-\frac{f'(\normpick)}{3}\langle\dot A_0, \dot A_0'J\rangle \ . 
\end{aligned}\end{equation}
\begin{remark}The symmetric tensor $\g$ and the form $\ome$ are defined only in terms of the various scalar products $\langle\cdot,\cdot\rangle$ and $\normpick$, hence by Lemma \ref{lem:invarianza} they are both $\SL(2, \R)$-invariant. In particular, the complex structure $\i$ is uniquely determined by the relation $\ome(\cdot,\cdot)=\g(\cdot, \i\cdot)$ once the form $\ome$ and the tensor $\g$ are given. In our case, this implies that $\i$ is $\SL(2,\R)$-invariant as well.\end{remark}
\begin{lemma}
For every $\dot J,\dot J'\in T_J\almost$ we have \begin{equation}\label{producttangentalmost}
\dot J\dot J'=\langle\dot J,\dot J'\rangle_J\mathds{1}-\langle J\dot J,\dot J'\rangle_JJ \ .  \end{equation}
\end{lemma}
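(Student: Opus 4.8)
The plan is to exploit the fact that, in real dimension two, the associative algebra $\End(\R^2)$ is spanned by $\mathds{1}$, $J$, and the two-dimensional subspace $T_J\almost$, and that this decomposition is adapted to the splitting of endomorphisms into $g_J$-symmetric and $g_J$-skew parts. Recall from the discussion preceding the lemma that $T_J\almost$ consists precisely of the trace-free, $g_J$-symmetric endomorphisms of $\R^2$, while $J$ is $g_J$-skew (since $J$ is $g_J$-orthogonal with $J^2=-\mathds{1}$, one has $J^{T}=-J$ with respect to $g_J$) and $\mathds{1}$ is $g_J$-symmetric. I would therefore decompose the product $\dot J\dot J'$ into its $g_J$-symmetric and $g_J$-skew components and identify each of them separately with a multiple of $\mathds{1}$ and of $J$.

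First I would record the Clifford-type identity for a single tangent vector. Since $\dot J\in T_J\almost$ is trace-free, the Cayley--Hamilton theorem for $2\times 2$ matrices gives $\dot J^2=-\det(\dot J)\,\mathds{1}$, and because $\dot J$ is trace-free one has $-\det(\dot J)=\tfrac12\tr(\dot J^2)=\langle\dot J,\dot J\rangle_J$; hence $\dot J^2=\langle\dot J,\dot J\rangle_J\,\mathds{1}$. Polarizing this identity, by applying it to $\dot J+\dot J'\in T_J\almost$ and subtracting $\dot J^2$ and $\dot J'^2$, yields the anticommutator
\[
\dot J\dot J'+\dot J'\dot J=2\langle\dot J,\dot J'\rangle_J\,\mathds{1},
\]
so the $g_J$-symmetric part of $\dot J\dot J'$ is exactly $\langle\dot J,\dot J'\rangle_J\,\mathds{1}$.

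Next I would treat the $g_J$-skew part. Taking $g_J$-transposes reverses the order of the product and fixes each symmetric factor, so the commutator $[\dot J,\dot J']=\dot J\dot J'-\dot J'\dot J$ is $g_J$-skew; since the space of $g_J$-skew endomorphisms of $\R^2$ is the line $\R J$, we may write $\tfrac12[\dot J,\dot J']=\mu J$ for a unique scalar $\mu$. Combining with the previous step gives $\dot J\dot J'=\langle\dot J,\dot J'\rangle_J\,\mathds{1}+\mu J$. To pin down $\mu$ I would pair both sides against $J$ through the trace: using $\tr(J)=0$ and $\tr(J^2)=-2$, one computes $\langle J\dot J,\dot J'\rangle_J=\tfrac12\tr(J\dot J\dot J')=\tfrac12\,\mu\,\tr(J^2)=-\mu$, whence $\mu=-\langle J\dot J,\dot J'\rangle_J$ and the claimed formula follows.

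There is no serious obstacle here; the statement is a purely pointwise linear-algebra identity, and the computation is short once the symmetric/skew splitting is in place. The only points requiring care are the signs coming from $J$ being $g_J$-skew (so that $\langle J,J\rangle_J=-1$) and the observation that $J\dot J\in T_J\almost$, which makes the pairing $\langle J\dot J,\dot J'\rangle_J$ the genuine scalar product rather than an ad hoc extension; these make the extraction of the coefficient $\mu$ unambiguous. As an alternative to this invariant argument, one could simply fix a $g_J$-orthonormal basis $\{e_1,e_2\}$ with $Je_1=e_2$, $Je_2=-e_1$, write $\dot J=\bigl(\begin{smallmatrix}a&b\\ b&-a\end{smallmatrix}\bigr)$ and $\dot J'=\bigl(\begin{smallmatrix}c&d\\ d&-c\end{smallmatrix}\bigr)$, and verify the identity by direct multiplication, reading off $\langle\dot J,\dot J'\rangle_J=ac+bd$ and $\langle J\dot J,\dot J'\rangle_J=ad-bc$.
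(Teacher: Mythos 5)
Your proof is correct, but it takes a somewhat different route from the paper's. The paper's argument is a one-liner on the commutant: since $\dot J$ and $\dot J'$ each anticommute with $J$, one has $J\dot J\dot J'=-\dot JJ\dot J'=\dot J\dot J'J$, so $\dot J\dot J'$ commutes with $J$, and the commutant of $J$ in $\End(\R^2)$ is $\Span_{\R}\{\mathds{1},J\}$; the identification of the two coefficients is then left implicit (``hence the thesis''). You instead split $\dot J\dot J'$ into its $g_J$-symmetric and $g_J$-skew parts: the symmetric part is computed exactly via Cayley--Hamilton applied to the trace-free matrix $\dot J+\dot J'$, giving the Clifford-type relation $\dot J\dot J'+\dot J'\dot J=2\langle\dot J,\dot J'\rangle_J\,\mathds{1}$, while the skew part must lie on the line $\R J$, with coefficient pinned down by a trace against $J$. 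What the paper's approach buys is brevity. What yours buys is completeness and a bit more structure: the polarized Cayley--Hamilton identity produces the coefficient of $\mathds{1}$ with no trace computation at all, and you carry out explicitly the coefficient extraction (including the sign coming from $\tr(J^2)=-2$) that the paper omits; the anticommutator relation you isolate is also of independent use. Both arguments are pointwise linear algebra establishing the same membership $\dot J\dot J'\in\Span_{\R}\{\mathds{1},J\}$, and your closing basis computation gives a third, fully explicit verification.
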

\begin{proof}
    Notice that $$J\dot J\dot J'=-\dot J J\dot J'=\dot J\dot J'J$$ Therefore, the matrix $\dot J\dot J'$ commutes with $J$, but it is straightforward to see that this is equivalent to $\dot J\dot J'\in\Span_{\R}\{\mathds{1}, J\}$, hence the thesis.
\end{proof}
\begin{lemma}\label{lem:decompositioncomplexstructure}
Let $\{e_1,e_2\}$ be a $g_J$-orthonormal basis of $\R^2$ such that $Je_1=e_2$ and $Je_2=-e_1$, and let $\{e_1^*, e_2^*\}$ be its dual basis. Then, writing $A=A_1e_1^*+A_2e_2^*$ and $\dot A=\dot A_1e_1^*+\dot A_2e_2^*$ we get
\begin{itemize}
    \item [(1)] $JA_2=A_1$
    \item[(2)] $-\dot A J-A\dot J=\underbrace{-(\dot A_1)_0Je_1^*-(\dot A_2)_0Je_2^*}_{\text{trace-less part}}\underbrace{-\frac {1}{2}\tr(\dot A_2)\mathds{1}e_1^*+\frac {1}{2}\tr(\dot A_1)\mathds{1}e_2^*}_{\text{trace part}} \ . $
\end{itemize}
\end{lemma}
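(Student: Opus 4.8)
The plan is to establish (1) first and feed it into (2). I read $A=A_1e_1^*+A_2e_2^*$ as the Pick form with $A(e_i)=A_i$, so that by Corollary \ref{cor:bijectioncubicpick} each $A_i$ is trace-free and $g_J$-symmetric; since $T_J\almost$ consists exactly of the trace-less $g_J$-symmetric endomorphisms, this means $A_1,A_2\in T_J\almost$, and likewise $\dot J\in T_J\almost$. For part (1) I would argue invariantly. Writing $C(X,Y,Z)=g_J(A(X)Y,Z)$ and using that $J$ is $g_J$-orthogonal with $J^2=-\mathds 1$, I get $g_J(JA(e_2)Y,Z)=-g_J(A(e_2)Y,JZ)=-C(e_2,Y,JZ)$. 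The symmetry $C(\cdot,\cdot,J\cdot)=C(J\cdot,\cdot,\cdot)$ of Corollary \ref{cor:bijectioncubicpick} rewrites this as $-C(Je_2,Y,Z)=C(e_1,Y,Z)=g_J(A(e_1)Y,Z)$, because $Je_2=-e_1$. As $Y,Z$ are arbitrary, this gives $JA_2=A_1$.

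For part (2) I would compute the two $\End(\R^2)$-valued coefficients of $-\dot AJ-A\dot J$, namely $-\dot A_1J-A_1\dot J$ (the $e_1^*$ slot) and $-\dot A_2J-A_2\dot J$ (the $e_2^*$ slot). The key tool is identity (\ref{producttangentalmost}): since $A_1,A_2,\dot J\in T_J\almost$, the products satisfy $A_i\dot J=\langle A_i,\dot J\rangle\mathds 1-\langle JA_i,\dot J\rangle J$. Using $A_1=JA_2$ from part (1), together with $J^2=-\mathds 1$ and cyclicity of the trace, I can express all four inner products in terms of $\tfrac12\tr(\dot JJA_i)$, which by the tangency condition of Lemma \ref{lem:tangentpickform} equals $\tfrac12\tr(\dot A_i)$. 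Concretely this yields $A_1\dot J=\tfrac12\tr(\dot A_2)\mathds 1-\tfrac12\tr(\dot A_1)J$ and $A_2\dot J=-\tfrac12\tr(\dot A_1)\mathds 1-\tfrac12\tr(\dot A_2)J$. Splitting $\dot A_i=(\dot A_i)_0+\tfrac12\tr(\dot A_i)\mathds 1$ in the remaining term $-\dot A_iJ$ and adding, the two multiples of $J$ proportional to $\tr(\dot A_i)$ cancel, leaving exactly $-(\dot A_i)_0J$ as trace-free part and the stated $\mathds 1$-term as trace part.

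The argument is almost entirely bookkeeping, so the only real point of care is the bridge between identity (\ref{producttangentalmost}) and the trace constraint of Lemma \ref{lem:tangentpickform}. The cancellation of the $\tr(\dot A_i)J$ contributions is precisely what pins the result into the claimed trace/trace-free decomposition, and the one place where a sign can go wrong is in converting $A_2=-JA_1$ and tracking the factor $J^2=-\mathds 1$ when I reduce the inner products $\langle A_i,\dot J\rangle$ and $\langle JA_i,\dot J\rangle$ to the traces appearing in Lemma \ref{lem:tangentpickform}. No analytic difficulty arises.
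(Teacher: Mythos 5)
Your proposal is correct. For part (2) it coincides with the paper's own proof step by step: the same splitting $-\dot A_iJ=-(\dot A_i)_0J-\tfrac12\tr(\dot A_i)J$, the same use of identity (\ref{producttangentalmost}) to expand $A_i\dot J$ as $\tfrac12\tr(A_i\dot J)\mathds{1}-\tfrac12\tr(JA_i\dot J)J$, the same conversion of these traces into $\tr(\dot A_i)$ via the tangency condition (\ref{tracedotA}) of Lemma \ref{lem:tangentpickform} combined with $A_1=JA_2$ (equivalently $A_2=-JA_1$), and the same cancellation of the $\tr(\dot A_i)J$ contributions. The only real difference is in part (1): the paper works in coordinates, expanding $A_1e_i$ and $JA_2e_i$ in the basis $\{e_1,e_2\}$ and matching coefficients through total symmetry of the Pick tensor (e.g.\ $\beta_{11}=C(e_1,e_1,Je_1)=C(Je_1,e_1,e_1)=\alpha_{21}$), whereas you argue invariantly from the $J$-symmetry $C(\cdot,\cdot,J\cdot)=C(J\cdot,\cdot,\cdot)$ of Corollary \ref{cor:bijectioncubicpick}, via $g_J(JA_2Y,Z)=-g_J(A_2Y,JZ)=-C(e_2,Y,JZ)=-C(Je_2,Y,Z)=C(e_1,Y,Z)$ for all $Y,Z$. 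The two rest on the same underlying facts, since the $J$-symmetries in Corollary \ref{cor:bijectioncubicpick} are themselves deduced from total symmetry; your version buys a shorter, basis-free verification (and is not circular, as Corollary \ref{cor:bijectioncubicpick} precedes this lemma), while the paper's version invokes only total symmetry directly. One cosmetic point: the trace-freeness and $g_J$-symmetry of the $A_i$ that you use at the outset is stated in the remark following Corollary \ref{cor:bijectioncubicpick} (properties (a)--(b) of the Pick form) rather than in the corollary itself, so that is the more accurate citation.
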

\begin{proof}
(1) By definition $A_i=A(e_i)$ for $i=1,2$ and the vector $A_i\cdot e_j$ can be written as a linear combination of $e_1,e_2$. Then, it is sufficient to prove that $A_1\cdot e_i=JA_2\cdot e_i$, for $i=1,2$. Hence, if $$A_1\cdot e_1=\alpha_{11}e_1+\beta_{11}e_2,\qquad A_2\cdot e_1=\alpha_{21}e_1+\beta_{21}e_2$$we get $JA_2\cdot e_1=\alpha_{21}e_2-\beta_{21}e_1$, but since the basis $\{e_1,e_2\}$ is $g_J$-orthonormal we can write $$\beta_{11}=g_J(A_1\cdot e_1, e_2)=C(e_1, e_1, Je_1)$$ and \begin{align*}\alpha_{21}&=g_J(JA_2\cdot e_1, e_1) \\ &=g_J(A(e_2)\cdot e_1,e_1)\tag{$J \ \text{is} \ g_J-\text{orthogonal}$}\\ &=C(Je_1, e_1, e_1) \\ &=\beta_{11} \tag{$C(\cdot,\cdot,\cdot) \ \text{is totally-symmetric}$}
\end{align*}With the same argument one can prove that $-\beta_{21}=\alpha_{11}$ and that $A_1\cdot e_2=JA_2\cdot e_2$, obtaining the thesis. \newline (2) By using the decomposition $\dot A=\dot A_0+(\dot A)_{\tr}$ we get
\begin{equation}
    -\dot AJ=-(\dot A_1)_0Je_1^*-(\dot A_2)_0Je_2^*-\frac 1{2}\tr(\dot A_1)Je_1^*-\frac 1{2}\tr(\dot A_2)Je_2^* \ .
\end{equation} The same happens for the tensor $A$, hence, using Equation  (\ref{producttangentalmost}), \begin{align*}-A\dot J&=-\frac 1{2}\tr(A_1\dot J)\mathds{1}e_1^*-\frac 1{2}\tr(A_2\dot J)\mathds{1}e_2^*+\frac 1{2}\tr(JA_1\dot J)Je_1^*+\frac{1}2\tr(JA_2\dot J)Je_2^* \\ &=-\frac 1{2}\tr(\dot A_2)\mathds{1}e_1^*+\frac 1{2}\tr(\dot A_1)\dsone e_2^*+\frac 1{2}\tr(\dot A_1)Je_1^*+\frac{1}2\tr(\dot A_2)Je_2^*\end{align*}where in the last equality we used (\ref{tracedotA}) and $JA_2=A_1$. It is now clear that adding the two terms $-\dot AJ$ and $-A\dot J$ we get the desired formula in the statement.
\end{proof}
\begin{theorem}\label{thm:pseudokahlerpick}
The triple $(\g,\i,\ome)$ defines an $\SL(2,\R)$-invariant pseudo-K\"ahler structure on $\pick$.
\end{theorem}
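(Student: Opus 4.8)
The $\SL(2,\R)$-invariance is immediate: $\g$ and $\ome$ are assembled from the scalar products $\langle\cdot,\cdot\rangle$ and from $\normpick$, all invariant by Lemma \ref{lem:invarianza}, and $\i$ is then invariant as it is characterised by $\ome=\g(\cdot,\i\cdot)$. It therefore remains to check the four defining properties of a pseudo-K\"ahler structure. I would first dispatch the algebraic identity $\i^2=-\Id$: writing $\i(\dot J,\dot A)=(-J\dot J,-\dot AJ-A\dot J)$ and iterating, the base slot gives $J^{2}\dot J=-\dot J$, while the fibre slot reduces to $-\dot A+A(J\dot J+\dot J J)=-\dot A$ thanks to the relation $J\dot J+\dot J J=0$ defining $T_J\almost$. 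That $\i$ preserves $T_{(J,A)}\pick$ is exactly the point of Lemma \ref{lem:decompositioncomplexstructure}(2): the trace part of $-\dot AJ-A\dot J$ equals $-\tfrac12\tr(\dot A_2)\dsone e_1^{*}+\tfrac12\tr(\dot A_1)\dsone e_2^{*}$, which one matches against the constraint (\ref{tracedotA}) applied to the rotated vector $-J\dot J$.

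Next I would treat compatibility and non-degeneracy together. By Lemma \ref{lem:decompositioncomplexstructure} the complex structure sends the trace-free fibre part $\dot A_0$ to $-\dot A_0 J$, i.e.\ a quarter-turn of the two-plane $T_J\almost$, and the identity $\tr(XJX'J)=\tr(XX')$ for $X,X'\in T_J\almost$ shows it preserves $\langle\dot A_0,\dot A_0'\rangle$; a parallel computation for the trace part together with the invariance of $\langle\dot J,\dot J'\rangle$ under $\mathcal I$ yields $\g(\i\cdot,\i\cdot)=\g(\cdot,\cdot)$, whence $\ome=\g(\cdot,\i\cdot)$ is antisymmetric and a direct substitution reproduces (\ref{symplecticform}). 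For non-degeneracy I would use Lemma \ref{lem:tangentpickform}: the free data of $T_{(J,A)}\pick$ is the pair $(\dot J,\dot A_0)$, the trace part $\dot A_{\tr}$ being a linear function of $\dot J$, so in these variables $\g$ is block diagonal. On the base block the term $\big(1-f(\normpick)\big)\langle\dot J,\dot J\rangle$ is positive because $f\le 0$, as is $-\tfrac{f'(\normpick)}{6}\langle\dot A_{\tr},\dot A_{\tr}\rangle$ because $f'<0$; on the fibre block $\tfrac{f'(\normpick)}{3}\langle\dot A_0,\dot A_0\rangle$ is negative definite. Hence $\g$ is everywhere non-degenerate of index $2$, i.e.\ pseudo-Hermitian of complex index $1$.

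For integrability I would identify $\i$ with the complex structure carried by $\pick$ through the holomorphic bundle structure of $\cubic$ under Corollary \ref{cor:bijectioncubicpick}: the base slot of $\i$ is the complex structure $\mathcal I$ of $\almost\cong\Teich^c(T^2)$, while the mixed term $-A\dot J$ encodes precisely the non-product, holomorphic coupling between base and fibre. As a holomorphic vector bundle carries an integrable complex structure, the Nijenhuis tensor $N_{\i}$ vanishes; alternatively one computes $N_{\i}=0$ directly from the formula for $\i$.

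The crux, and what I expect to be the main obstacle, is the closedness $\mathrm{d}\ome=0$. I would fix a frame adapted to the base-fibre splitting — a holomorphic coordinate on $\almost\cong\Hyp$ and the fibre coordinate $c$ of $q=c\,\mathrm{d}z^{3}$ — and evaluate $\ome$ on pairs of frame fields as functions of $\normpick$, which under Corollary \ref{cor:bijectioncubicpick} is a multiple of $\|q\|_J^{2}=|c|^{2}e^{-3\phi}$. Expanding $\mathrm{d}\ome$ by the Koszul formula, the derivatives fall on the coefficients $1-f(\normpick)$ and $f'(\normpick)$ and on the Lie brackets of the frame, the chain rule producing $f''(\normpick)$ terms; the assertion is that everything cancels, leaving an algebraic identity in $f,f',f''$ valid for every admissible $f$ — so that, consistently with the statement, hypotheses (i)--(ii) enter only the signature computation above and not closedness. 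Because $\i$ is the complex structure of $\cubic$, a cleaner alternative would be to exhibit a pseudo-K\"ahler potential, i.e.\ a real function $F$ of the Weil--Petersson potential on $\Hyp$ and of $\|q\|^{2}$ with $\ome=i\partial\bar\partial F$, making $\mathrm{d}\ome=0$ automatic. The genuine difficulty in either route is the twisting of the fibre coordinate over the base and the fact that $\normpick$ couples horizontal and vertical directions, so that its differential has components in both; organising this mixing, rather than any property of $f$, is where the real work lies.
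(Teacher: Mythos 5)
Your proposal follows the paper's architecture for most of the verification: the $\SL(2,\R)$-invariance via Lemma \ref{lem:invarianza}, the computation $\i^2=-\dsone$, the compatibility $\g(\i\cdot,\i\cdot)=\g(\cdot,\cdot)$ via Lemma \ref{lem:decompositioncomplexstructure}, and integrability by matching $\i$ with the complex structure of the holomorphic bundle $\cubic$ under Corollary \ref{cor:bijectioncubicpick} are all exactly the paper's steps (the paper carries out the integrability identification concretely, computing that the Pick form associated to $\Ree(-i\dot q)$ is $-\dot AJ-A\dot J$ via Lemma \ref{lem:Trautweinfibremap}; you assert this coupling rather than compute it, but the idea coincides). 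Where you genuinely diverge --- and improve on the paper --- is non-degeneracy. The paper postpones this to Lemma \ref{lem:nondegenerateclosed} and proves it by brute force: it writes $\g_{(z,w)}$ in coordinates on $\Hyp\times\C$ and computes $\det(\g_{(z,w)})=\frac{16}{9}y^2(f')^2(1-f)^2\neq 0$, reading off the signature afterwards from the restrictions to $\Hyp\times\{0\}$ and $\{0\}\times\C$. Your argument is intrinsic: by Lemma \ref{lem:tangentpickform} the free data in $T_{(J,A)}\pick$ is $(\dot J,\dot A_0)$, with $\dot A_{\tr}$ a linear function of $\dot J$ through (\ref{tracedotA}), and since (\ref{pseudoriemannianmetric}) contains no term pairing $\dot J$ with $\dot A_0$, the metric is block diagonal in these variables, with base block $(1-f)\langle\dot J,\dot J\rangle-\frac{f'}{6}\langle\dot A_{\tr},\dot A_{\tr}\rangle$ positive definite (since $f\le 0$ and $f'<0$) and fibre block $\frac{f'}{3}\langle\dot A_0,\dot A_0\rangle$ negative definite. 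This yields non-degeneracy and the index $2$ in one stroke, and is cleaner than the paper's computation.

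There is, however, a genuine gap: you never prove $\mathrm{d}\ome=0$. You correctly single it out as the crux, you describe the computation that would settle it (an adapted frame on $\Hyp\times\C$, derivatives falling on $1-f$ and $f'$, chain-rule terms in $f''$), and then you write that ``the assertion is that everything cancels'' --- but that assertion is precisely the content to be verified, and nothing in your setup forces the cancellation a priori. This verification is exactly what the paper does in Lemma \ref{lem:nondegenerateclosed}: it writes $\ome_{(z,w)}$ explicitly in the coordinates $(x,y,u,v)$, with $f,f'$ evaluated at $y^3(u^2+v^2)$, and checks that the coefficients of the four basis $3$-forms $\dy\wedge\du\wedge\devu$, $\dx\wedge\du\wedge\devu$, $\dx\wedge\dy\wedge\devu$, $\dx\wedge\dy\wedge\du$ in $\mathrm{d}\ome$ vanish identically; the $f'$ and $f''$ terms cancel in pairs for an arbitrary smooth $f$, which confirms your expectation that hypotheses (i)--(ii) enter only the signature and not the closedness. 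Your alternative suggestion of exhibiting a potential $F$ with $\ome=i\partial\bar\partial F$ is likewise only named, not constructed. Until one of these computations is actually performed, what you have established is an almost pseudo-Hermitian structure of the right signature, not a pseudo-K\"ahler one, so the proof is incomplete at its decisive point.
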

\begin{proof}In order not to overload the following proof too much, the closedness of $\ome$ and the non-degeneracy of $\g$ are postponed to Lemma \ref{lem:nondegenerateclosed} at the end of the section, as it requires a computation in local coordinates.\newline
    $\bullet$ \underline{\emph{$\i^2=-\dsone$ and it is integrable.}}\newline The first claim is a calculation:\begin{align*}
        \i_{(J,A)}^2(\dot J, \dot A)&=\i_{(J,A)}\big(-J\dot J, -\dot AJ-A\dot J\big) \\ &=(J^2\dot J, -(-\dot AJ-A\dot J)J+AJ\dot J) \\ &=(-\dot J, -\dot A +A\dot J J+AJ\dot J) \\ &=(-\dot J, -\dot A)\tag{$\dot J J=-J\dot J$}
    \end{align*}For the second one, it is sufficient to prove that, under the bijection in Corollary \ref{cor:bijectioncubicpick}, the almost-complex structure $\i$ on $\pick$ corresponds to the multiplication by $-i$ on $\cubic$ (see Lemma \ref{lem:Trautweinfibremap}, part $(2)$). Since the latter is integrable, the former is integrable as well. To show this, we need to compute the Pick tensor $\widetilde C$ associated with the variation $-i\dot q$ of the cubic differential $q$ in the fibre over $J$. Thanks to Corollary \ref{cor:bijectioncubicpick} this is given by $$\widetilde C(\cdot,\cdot,\cdot)=\Ree(-i\dot q)=-\dot C(\cdot,J\cdot,\cdot)-C(\cdot,\dot J\cdot,\cdot)\ .$$
    The relation with the new Pick form $\widetilde A$ is the one in Definition \ref{def:picktensor} and since \begin{align*}
        \widetilde C(X,Y,Z)&=-\dot C(X,JY,Z)-C(X,\dot JY,Z) \\ &=g_J\big((-\dot AJ-A\dot J)(X)Y,Z\big)
    \end{align*}for all $X,Y,Z\in\Gamma(T\R^2)$, we get that $\widetilde A=-\dot AJ-A\dot J$, hence the thesis. \newline
    $\bullet$ \underline{\emph{The metric $\g$ and the complex structure $\i$ are compatible.}} \newline We need to prove that $$\g_{(J,A)}\big(\i_{(J,A)}(\dot J,\dot A); \ \i_{(J,A)}(\dot J', \dot A')\big)=\g_{(J,A)}\big((\dot J, \dot A); (\dot J',\dot A')\big) \ . $$ By definition of $\i$ we have \begin{align*}\g_{(J,A)}\big(\i_{(J,A)}(\dot J,\dot A); \ \i_{(J,A)}(\dot J', \dot A')\big)&=\big(1-f(\normpick)\big)\langle-J\dot J,-J\dot J'\rangle \\ &+\frac{f'(\normpick)}{3}\langle -(\dot AJ+A\dot J)_0, -(\dot A'J'+A'\dot J')_0\rangle \\ &-\frac{f'(\normpick)}{6}\langle-(\dot AJ+A\dot J)_{\tr}, -(\dot A'J'+A'\dot J')_{\tr}\rangle \ . 
\end{align*}Since the argument of the functions $f, f'$ depends only on the norm of $A$ (up to a constant) it has not been changed when we applied $\i_{(J,A)}$, then we can focus only on the scalar products part. The first term is \begin{align*}\langle-J\dot J,-J\dot J'\rangle&=\frac 1{2}\tr(J\dot JJ\dot J') \\ &=\frac 1{2}\tr(\dot J\dot J') \tag{$J\dot J=-\dot JJ$} \\ &=\langle\dot J,\dot J'\rangle \ . \end{align*}
Applying part (2) of Lemma \ref{lem:decompositioncomplexstructure} and observing that $(\dot A_i)_0,(\dot A_i')_0\in T_J\almost, \ i=1,2$, the second term is \begin{align*}
   \langle -(\dot AJ+A\dot J)_0, -(\dot A'J'+A'\dot J')_0\rangle&=\int_{T^2}\bigg(\tr((\dot A_1)_0J(\dot A_1')_0J)+\tr((\dot A_2)_0J(\dot A_2')_0J)\bigg)e_1^*\wedge e_2^* \\ &=\int_{T^2}\bigg(\tr((\dot A_1)_0(\dot A_1')_0+(\dot A_2)_0(\dot A_2')_0)\bigg)e_1^*\wedge e_2^* \\ &=\langle \dot A_0,\dot A_0'\rangle \ . 
\end{align*}
Applying part (2) of Lemma \ref{lem:decompositioncomplexstructure} the third term is: \begin{align*}
    \langle-(\dot AJ+A\dot J)_{\tr}, -(\dot A'J'+A'\dot J')_{\tr}\rangle&=\frac 1{4}\int_{T^2}\tr(\tr(\dot A_1)\tr(\dot A_1')\dsone+\tr(\dot A_2)\tr(\dot A_2')\dsone)e_1^*\wedge e_2^* \\&=\langle \dot A_{\tr},\dot A_{tr}'\rangle \ .
\end{align*}Hence, we have the thesis.
\end{proof}

\begin{remark}\label{rmk:complexstructureoncomplementzerosection}
The complex structure $\i$ preserves the $0$-section of $T_{(J,A)}\pick$ since $\i_{(J,0)}(\dot J,0)=(-J\dot J, 0)$. In particular, $\i$ still defines a complex structure on the complement of the $0$-section on $\pick$, which is identified with $Q^3_0(\Teich^c(T^2))$ by Corollary \ref{cor:bijectioncubicpick} which is further identified with $\deft$ by Corollary \ref{cor:cubictorusandproperlyconvex}. Hence, we get a well-defined complex structure on $\deft$ which will be denoted with $\i$ by abuse of notation. The same argument holds for the pseudo-Riemannian metric $\g$ and the symplectic form $\ome$.
\end{remark}
\begin{manualtheorem}A
The deformation space $\deft$ admits a $MCG(T^2)$-equivariant pseudo-K\"ahler structure $(\g,\i,\ome)$.
\end{manualtheorem}

\begin{proof}
By Theorem \ref{thm:pseudokahlerpick} and Remark \ref{rmk:complexstructureoncomplementzerosection} the deformation space $\deft$ has a well-defined pseudo-K\"ahler structure $(\g,\i,\ome)$. Since all the identifications are equivariant with respect to $\SL(2,\Z)\cong MCG(T^2)$ and the triple $(\g,\i,\ome)$ is $\SL(2,\R)$-invariant, it follows that the induced pseudo-K\"ahler structure is $MCG(T^2)$-invariant.
\end{proof}
\begin{remark}
In the case $g\ge 2$, we will no longer have a family of pseudo-Kahler structures on $\defg$, but we will have to choose a particular function $f$ in order to find a triple $(\g,\ome,\i)$. The precise expression of the function is the following
\begin{equation}\label{functionfhighergenus}
    f(t)=-\bigg(\int_0^tF'(s)s^{-\frac 1{3}}\bigg)t^{\frac 1{3}}
\end{equation}where $F:[0,+\infty)\to\R$ is the unique smooth function such that
$$ce^{-F(t)}-2te^{-3F(t)}+1=0$$ with $c:=\frac{2\pi(2-g)}{\Vol(\Sg,\rho)}.$ 
It is not difficult to see that such function $f$ satisfies all the properties required in Theorem \ref{thmA}. 
\end{remark}


It only remains to prove that the symmetric tensor $\g$ and the $2$-form $\ome$ on $\pick$ are non-degenerate and closed, respectively. As we stated above, we need to write their expression in local coordinates. First of all, it is necessary to find the analogue in coordinates of the two spaces, $\almost$ and $\pick$, which we have studied so far. Let $G$ and $\Omega$ be the restriction of $\g$ and $\ome$ to the $0$-section of $\pick$, which is identified with $\almost$. Then, $$G_J(\dot J, \dot J')=\langle\dot J,\dot J'\rangle_J,\qquad\qquad\Omega_J(\dot J,\dot J')=-\langle \dot J,J\dot J'\rangle_J$$ with $\dot J,\dot J'\in T_J\almost$. In this case $G_J$ is a scalar product for all $J\in\almost$, hence $(G,\Omega)$ is an $\SL(2,\R)$-invariant K\"ahler structure on $\almost$. Moreover, the $\SL(2,\R)$-action is transitive with stabilizer SO$(2)$ at the standard linear complex structure $$J_0=\begin{pmatrix}0 & -1 \\ 1 & 0\end{pmatrix} \ . $$ Therefore, $\almost\cong\SL(2,\R)/$SO$(2) \cong \Hyp$. 
\begin{lemma}[{\cite[Lemma 4.3.2]{trautwein2018infinite}}]
Let $\Hyp$ be the hyperbolic plane with complex coordinate $z=x+iy$ and with K\"ahler structure $$g_{\Hyp}=\frac{\mathrm{d}x^2+\mathrm{d}y^2}{y^2} \qquad\qquad \omega_{\Hyp}=-\frac{\mathrm{d}x\wedge\mathrm{d}y}{y^2} \ . $$ Then, there exists a unique $\SL(2,\R)$-invariant K\"ahler isometry $j:\Hyp\to\almost$ such that $j(i)=J_0$. It is given by the formula \begin{equation}\label{kahlerisometryalmost}
    j(x+iy):=\begin{pmatrix}
    \frac{x}{y} & -\frac{x^2+y^2}{y} \\ \frac 1{y} & -\frac x{y} 
    \end{pmatrix} \ . 
\end{equation}
\end{lemma}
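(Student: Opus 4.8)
The plan is to construct the isometry $j$ explicitly and then verify the three required properties in turn: that $j$ is a diffeomorphism intertwining the $\SL(2,\R)$-actions, that it pulls back the Kähler structure $(G,\Omega)$ on $\almost$ to $(g_{\Hyp},\omega_{\Hyp})$, and that $j(i)=J_0$. The candidate formula is already written in \eqref{kahlerisometryalmost}, so the real content is verification and the proof of uniqueness. I would first check directly that for $z=x+iy$ with $y>0$, the matrix $j(z)$ lies in $\almost$: one computes $j(z)^2=-\dsone$ (a $2\times 2$ determinant-one, trace-zero computation) and verifies the positivity condition $\rho(v,j(z)v)>0$ using $\det j(z)=\frac{x^2+y^2}{y^2}-\frac{x^2}{y^2}=1>0$ together with the sign of $y$. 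This establishes that $j$ maps $\Hyp$ into $\almost$.

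The key structural step is \textbf{equivariance}. I would recall that the $\SL(2,\R)$-action on $\almost$ is by conjugation $P\cdot J=PJP^{-1}$, while on $\Hyp$ it is by Möbius transformations $z\mapsto \frac{az+b}{cz+d}$ for $P=\begin{psmallmatrix}a&b\\c&d\end{psmallmatrix}$. The claim is $j\!\left(\frac{az+b}{cz+d}\right)=P\, j(z)\, P^{-1}$. This is most cleanly seen by identifying $\almost$ with $\PSL(2,\R)/\SO(2)$: the map $P\mapsto P J_0 P^{-1}$ descends to a diffeomorphism because $\Stab(J_0)=\SO(2)$ (the matrices commuting with $J_0$ and lying in $\SL(2,\R)$), and the same quotient describes $\Hyp$ via $P\mapsto P\cdot i$. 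Once I verify that $j(i)=J_0$ and that $j$ is $\SL(2,\R)$-equivariant, the identification $\almost\cong\SL(2,\R)/\SO(2)\cong\Hyp$ is forced, and $j$ is automatically a diffeomorphism. The computation $j(i)=J_0$ is immediate by substituting $x=0,y=1$ into \eqref{kahlerisometryalmost}.

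For the \textbf{isometry property}, since both $(G,\Omega)$ and $(g_{\Hyp},\omega_{\Hyp})$ are $\SL(2,\R)$-invariant and the action is transitive, it suffices to check that $j^*G=g_{\Hyp}$ and $j^*\Omega=\omega_{\Hyp}$ at the single point $i\in\Hyp$. At $z=i$ I would differentiate \eqref{kahlerisometryalmost} in the coordinate directions $\partial_x$ and $\partial_y$ to obtain $dj_i(\partial_x),dj_i(\partial_y)\in T_{J_0}\almost$, then compute $\langle\cdot,\cdot\rangle_{J_0}=\frac12\tr(\cdot\,\cdot)$ on these two tangent vectors and compare with $g_{\Hyp}|_i=dx^2+dy^2$ and $\omega_{\Hyp}|_i=-dx\wedge dy$. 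This reduces the whole statement to a single trace computation of two explicit $2\times2$ matrices. The main obstacle, such as it is, is bookkeeping: ensuring the normalizations (the factor $\tfrac12$ in $\langle\cdot,\cdot\rangle_J$, the signs in $\Omega_J(\dot J,\dot J')=-\langle\dot J,J_0\dot J'\rangle$, and the Möbius-versus-conjugation conventions) are matched so that the comparison at $i$ comes out exactly as $g_{\Hyp}$ and $\omega_{\Hyp}$ rather than a scalar multiple. Uniqueness follows because any two $\SL(2,\R)$-equivariant maps agreeing at $i$ must agree everywhere by transitivity, so the normalization $j(i)=J_0$ pins down $j$ completely.
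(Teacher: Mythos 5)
Your proposal is correct, but note that the paper itself offers no proof of this lemma: it is imported verbatim as a citation to Trautwein's thesis (Lemma 4.3.2), so there is no in-paper argument to compare against. Your outline is a valid, self-contained proof and follows the standard route one would expect: (a) $j(z)$ is trace-free with $\det j(z)=1$, so $j(z)^2=-\dsone$ by Cayley--Hamilton, and $\rho(e_1,j(z)e_1)=1/y>0$ gives the compatibility condition (this last point is the sign-of-$y$ check; it does not follow from the determinant alone, so keep it as a separate one-line computation); (b) both $\Hyp$ and $\almost$ are homogeneous under $\SL(2,\R)$ with stabilizer $\SO(2)$ at $i$ and at $J_0$ respectively, so the equivariant map with $j(i)=J_0$ exists, is unique, and is a diffeomorphism --- and one then checks it agrees with the displayed formula, e.g.\ by computing $PJ_0P^{-1}$ for $P=\begin{pmatrix}\sqrt{y} & x/\sqrt{y}\\ 0 & 1/\sqrt{y}\end{pmatrix}$, which sends $i\mapsto x+iy$; (c) by invariance and transitivity the isometry property reduces to the single point $i$. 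For the record, that final computation does close up with the paper's normalizations: $\mathrm{d}_ij(\partial_x)=\begin{pmatrix}1&0\\0&-1\end{pmatrix}$ and $\mathrm{d}_ij(\partial_y)=\begin{pmatrix}0&-1\\-1&0\end{pmatrix}$, so $\tfrac12\tr$ of products reproduces $\mathrm{d}x^2+\mathrm{d}y^2$ at $i$, and $\Omega_{J_0}\bigl(\mathrm{d}_ij(\partial_x),\mathrm{d}_ij(\partial_y)\bigr)=-\tfrac12\tr\bigl(\mathrm{d}_ij(\partial_x)\,J_0\,\mathrm{d}_ij(\partial_y)\bigr)=-1$, matching $\omega_{\Hyp}|_i(\partial_x,\partial_y)=-1$; so the bookkeeping you flagged as the main risk indeed works out exactly, with no stray scalar factor.
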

\begin{remark}
The minus sign in front of the area form on $\Hyp$ shows up since we are considering the relation $\ome(\cdot,\cdot)=\g(\cdot,\i\cdot)$ on $\pick$, hence on $\almost$.
\end{remark}
In particular, thanks to this last lemma and the isomorphism $\Teich^c(T^2)\cong\almost$, we can identify the Teichm\"uller space of the torus with $\Hyp$. Whenever we are thinking of the Teichm\"uller space of the torus as $\Hyp$, we will denote the total space of $\cubic$ as $\cubichyp$. In particular, we can identify $\cubichyp$ with $\Hyp\times\C$, where $\C$ is a copy of the fiber $\cubichyp_z$ over a point $z\in\Hyp$. We can define an $\SL(2,\R)$-action on $\Hyp\times \C$ by
$$\begin{pmatrix}
a & b \\ c & d
\end{pmatrix}\cdot(z,w):=\bigg(\frac{az+b}{cz+d}, (cz+d)^3w\bigg),\qquad \text{with} \ (z,w)\in\Hyp\times\C,\quad ad-bc=1 \ . $$
Moreover, the metric on the fiber is the one induced by the norm 
$$|w|_z^2=\Imm(z)^3|w|^2 \qquad \text{for} \ z\in\Hyp, w\in\cubichyp_z \ .$$ 
Given $J\in\almost$, let us define the space of $J$-complex symmetric tri-linear forms by \begin{align*}S_3(\R^2, J):&=\{\gamma:\R^2\otimes\R^2\otimes\R^2\longrightarrow\C \ | \ \gamma \ \text{ is symmetric and} \ (J,i)-\text{tri-linear}\} \\ &\cong\{\tau:\R^2\to\C \ | \ \text{for all} \ \alpha,\beta\in\R \ \text{and} \ v\in\R^2 \ \text{it holds} \ \tau(\alpha v+\beta Jv)=(\alpha+i\beta)^3\tau(v)\} \ . 
\end{align*}
This space can be seen as the fiber of a complex line bundle $\mathcal{L}_3(\R^2)\to\almost$ endowed with a natural $\SL(2,\R)$-action given by $$P\cdot (J,\gamma):=(PJP^{-1}, (P^{-1})^*\gamma),\qquad \ \text{for} \ P\in\SL(2,\R) \ .$$
It is not difficult to see that the line bundle $\mathcal{L}_3(\R^2)$ can be identified with $\pick$. In particular, each fiber $S_3(\R^2, J)$ is endowed with a scalar product from the one on $\pick_J$ defined in (\ref{def:scalarproductpick}).
\begin{lemma}[{\cite[Lemma 5.2.1]{trautwein2018infinite}}]\label{lem:Trautweinfibremap}
Define the map $\varphi:\cubichyp\to\Hom(\R^2\otimes\R^2\otimes\R^2, \C)$ by
\begin{align*}
    \varphi(z,w) : \ &\R^2\longrightarrow\C \\ & v\mapsto \bar w(v_1-\bar zv_2)^3
\end{align*}and let $j:\Hyp\to\almost$ be the map defined by (\ref{kahlerisometryalmost}). Then, the following holds: \begin{itemize}
    \item[(1)]$\varphi(z,w)\in S_3(\R^2,j(z)), \ \text{for all} \  (z,w)\in\cubichyp$.
    \item[(2)]The fibre map $\varphi(z,\cdot) : \cubichyp_z\cong\C\to S_3(\R^2,j(z))$ is a complex anti-linear isometry for every $z\in\Hyp$.
    \item[(3)] The bundle map $(j,\varphi): \cubichyp\to\mathcal{L}_3(\R^2)$ is a $\SL(2,\R)$-equivariant bijection.
\end{itemize}
\end{lemma}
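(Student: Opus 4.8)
The plan is to reduce all three statements to the single linear-algebraic observation that, writing $\ell_z(v) := v_1 - \bar z v_2$ for the $\C$-valued functional on $\R^2$, one has
\[
\ell_z\big(j(z)v\big) = i\,\ell_z(v), \qquad \forall v \in \R^2 .
\]
I would verify this directly from the explicit matrix (\ref{kahlerisometryalmost}): substituting $j(z)v$ and using $\bar z = x - iy$, the components recombine so that the factors of $y$ cancel and one is left with exactly $i(v_1 - \bar z v_2)$; conceptually, $\ell_z$ is a $(j(z),i)$-linear covector. Granting this, part $(1)$ is immediate, since $\varphi(z,w)(v) = \bar w\,\ell_z(v)^3$ gives, for all $\alpha,\beta \in \R$,
\[
\varphi(z,w)\big(\alpha v + \beta\, j(z)v\big) = \bar w\big(\alpha \ell_z(v) + \beta\, i\,\ell_z(v)\big)^3 = (\alpha + i\beta)^3\,\varphi(z,w)(v),
\]
which is precisely the homogeneity characterizing $S_3(\R^2, j(z))$.

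Next I would establish the equivariance half of part $(3)$, as it is needed to shorten the isometry computation. Writing $P = \left(\begin{smallmatrix} a & b \\ c & d\end{smallmatrix}\right)$ with $ad-bc = 1$, so that $P^{-1} = \left(\begin{smallmatrix} d & -b \\ -c & a\end{smallmatrix}\right)$, a direct computation gives $\varphi(z,w)(P^{-1}v) = \bar w\big((c\bar z + d)v_1 - (a\bar z + b)v_2\big)^3$, whereas $\varphi\big(P\cdot(z,w)\big)(v) = \overline{(cz+d)^3}\,\bar w\,(v_1 - \overline{z'}v_2)^3$ with $z' = \tfrac{az+b}{cz+d}$. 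Using $\overline{cz+d} = c\bar z + d$ and $(c\bar z + d)\,\overline{z'} = a\bar z + b$, the two expressions coincide, so $\varphi\big(P\cdot(z,w)\big) = (P^{-1})^*\varphi(z,w)$. Together with the $\SL(2,\R)$-equivariance of $j$ recorded in the preceding lemma, this shows that $(j,\varphi)$ is $\SL(2,\R)$-equivariant.

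For part $(2)$, anti-linearity is visible from the definition, since $\varphi(z,\lambda w) = \overline{\lambda}\,\varphi(z,w)$ for $\lambda \in \C$. For the isometry statement I would exploit equivariance: both the fibre norm $\vl w\vl_z^2 = \Imm(z)^3\vl w\vl^2$ and the scalar product on $S_3(\R^2, j(z))$ inherited from $\pick$ are $\SL(2,\R)$-invariant (Lemma \ref{lem:invarianza}), and since $\SL(2,\R)$ acts transitively on $\Hyp$, it suffices to check $\vl\varphi(z,w)\vl = \vl w\vl_z$ at the basepoint $z = i$, where $j(i) = J_0$ and $\ell_i(v) = v_1 + i v_2$. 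There one extracts the Pick form $A$ associated with $C = \Ree\,\varphi(i,w)$ through $C(X,Y,Z) = g_{J_0}(A(X)Y,Z)$, reducing the claim to a finite evaluation of $\langle A, A\rangle_{J_0}$ in the standard $g_{J_0}$-orthonormal basis, to be matched with $\vl w\vl_i^2 = \vl w\vl^2$. This is the step I expect to be the main obstacle, as it is the only place requiring careful bookkeeping of the identification $\mathcal{L}_3(\R^2) \cong \pick$ and of the normalization in the scalar product (\ref{def:scalarproductpick}).

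Finally, part $(3)$ follows by assembling the pieces: for each $z$ the fibre map $\varphi(z,\cdot)\colon \C \to S_3(\R^2, j(z))$ is anti-linear and norm-preserving, hence injective, and since both sides are one-dimensional over $\C$ it is a bijection; as $j\colon \Hyp \to \almost$ is itself a bijection, the total map $(j,\varphi)$ is a bijection, equivariant by the second paragraph. Everything but the isometry is governed by the single identity $\ell_z \circ j(z) = i\,\ell_z$.
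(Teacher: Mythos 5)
Your proposal cannot be compared against the paper's own argument for the simple reason that there is none: the paper imports this lemma wholesale from Trautwein (Lemma 5.2.1) without proof, so your write-up is a self-contained substitute rather than an alternative route. That said, it is correct. The pivotal identity $\ell_z(j(z)v)=i\,\ell_z(v)$ does hold: from (\ref{kahlerisometryalmost}) the coefficient of $v_1$ in $\ell_z(j(z)v)$ is $\frac{x-(x-iy)}{y}=i$ and that of $v_2$ is $\frac{-(x^2+y^2)+x(x-iy)}{y}=-y-ix=-i\bar z$, so part (1) is immediate; the equivariance computation is right, the key cancellation being $(c\bar z+d)\overline{z'}=a\bar z+b$; and in part (3), injectivity of each fibre map is even cheaper than via norms, since $\varphi(z,w)=0$ forces $w=0$, while surjectivity follows because $S_3(\R^2,J)$ is complex one-dimensional (any $\tau$ there is determined by its value on a single nonzero vector).

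The one point that genuinely needs care is the one you flagged. Carrying out your basepoint computation at $z=i$ with $w=u+iv$ gives $C_{111}=u$, $C_{112}=v$, $C_{122}=-u$, $C_{222}=-v$, hence $A_1=\left(\begin{smallmatrix}u&v\\v&-u\end{smallmatrix}\right)$, $A_2=\left(\begin{smallmatrix}v&-u\\-u&-v\end{smallmatrix}\right)$, and therefore $\langle A,A\rangle_{J_0}=\tr(A_1^2+A_2^2)=4(u^2+v^2)=4\vl w\vl_i^2$ (using the unit-area normalization of the torus). So with the raw pairing (\ref{def:scalarproductpick}) the fibre map multiplies squared norms by $4$; it is an honest isometry exactly for the rescaled quantity $\normpick=\frac 1{4}\vl\vl A\vl\vl_J^2$, which is what the paper actually uses: it is the argument of $f$, and the coordinate identity $\frac 1{4}\vl\vl A_{(z,w)}\vl\vl^2_{j(z)}=y^3(u^2+v^2)=\vl w\vl_z^2$ appears later in Section 3.1. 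So either state part (2) with respect to the normalized scalar product $\frac 1{4}\langle\cdot,\cdot\rangle_J$ on $S_3(\R^2,j(z))$, or accept the uniform factor; with that convention fixed, your reduction to the basepoint via Lemma \ref{lem:invarianza} together with the $\SL(2,\R)$-invariance of $\vl w\vl^2_z=\Imm(z)^3\vl w\vl^2$ closes the argument, and the remaining steps of your plan go through as written.
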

At this point it easy to compute in coordinates the Pick tensor $C\in\pick_J$, the Pick form $A\in\End(\R^2)\otimes T^*\R^2$ and their respective variations: $\dot C$ and $\dot A=g_J^{-1}\dot C$, by using this last two lemmas and the isomorphism $\cubic\cong\pick$. Let $z=x+iy$ and $w=u+iv$ be the complex coordinates on $\Hyp$ and $\C$ respectively, then the bundle map $(j,\varphi)$ in Lemma \ref{lem:Trautweinfibremap} is given by $$\Hyp\times\C\ni(z,w)\longmapsto\big(j(z),C_{(z,w)}\big)\in\pick$$where $C_{(z,w)}=\Ree(q_{(z,w)})$ with $q_{(z,w)}=\widebar w(\dx_0-\widebar z\dy_0)^3$ (see Corollary \ref{cor:bijectioncubicpick}). Hence, the Pick form $A_{(z,w)}$ will be recovered by (\ref{picktensorandpickform}). Since $\SL(2,\R)$ acts transitively on $\Hyp$, it is enough to compute the tensors at the point $(i,w)\equiv(0,1,u,v)$ for a generic $w\in \C$. The components of the Pick tensor $C_{(z,w)}$ are given by
\begin{align*}
    &C_{111}(z,w)=u,\qquad C_{112}(z,w)=-xu+yv,\qquad C_{122}(z,w)=ux^2-uy^2-2xyv, \\ &C_{222}(z,w)=-ux^3-vy^3+3(uy^2x+x^2yv) \ .
\end{align*}
The remaining components are determined by the four above since $C$ is totally-symmetric. Its variation at $\dot C_{(i,w)}$ at $(i,w)$ is
\begin{align*}
    &\dot C_{111}(i,w)=\dot u,\qquad \dot C_{112}(i,w)=-u\dot x+\dot v+v\dot y,\qquad \dot C_{122}(i,w)=-\dot u-2(u\dot y+v\dot x), \\ &\dot C_{222}(i,w)=-\dot v+3(u\dot x-v\dot y) \ . 
\end{align*}
The Pick form computed in $(i,w)$ is then \begin{align}\label{pickformcoordinate}
    A_{(i,w)}=\begin{pmatrix}
    u & v \\ v & -u
    \end{pmatrix}\dx_{0}+\begin{pmatrix}
    v & -u \\ -u & -v
    \end{pmatrix}\dy_{0} \ .
\end{align}
Its variation $\dot A$ will be given in terms of its trace-free and trace part at the point $(i,w)$
\begin{equation*}\label{dotpickformcoordinates}
     (\dot A_0)_{(i,w)}=\begin{pmatrix}
    \dot u+u\dot y+v\dot x & -u\dot x+\dot v+v\dot y \\ -u\dot x+\dot v+v\dot y & -\dot u-u\dot y-v\dot x
    \end{pmatrix}\dx_{0}+\begin{pmatrix}
    \dot v+2(v\dot y-u\dot x) & -\dot u-2(u\dot y+v\dot x) \\ -\dot u-2(u\dot y+v\dot x) & -\dot v+2(u\dot x-v\dot y)
    \end{pmatrix}\dy_{0}
\end{equation*}\begin{equation*}
    (\dot A_{\tr})_{(i,w)}=\frac 1{2}\begin{pmatrix}
    -u\dot y-v\dot x & 0 \\ 0 & -u\dot y-v\dot x
    \end{pmatrix}\dx_{0}+\frac 1{2}\begin{pmatrix}
    u\dot x-v\dot y & 0 \\ 0 & u\dot x-v\dot y
    \end{pmatrix}\dy_{0} \ . 
\end{equation*}
Thanks to this expression in coordinates and together with the action of $\SL(2,\R)$ on $\Hyp\times\C$, we are now able to write the metric $\g$ and the symplectic form $\ome$ at the point $(z,w)$. Let $\{\frac{\partial}{\partial x},\frac \partial{\partial y},\frac\partial{\partial u},\frac\partial{\partial v}\}$ be a real basis of the tangent space of $\Hyp\times\C$ with its dual basis $\{\dx,\dy,\du,\devu\}$, then the expressions (\ref{pseudoriemannianmetric}) and (\ref{symplecticform}) become respectively
\begin{equation*}
    \g_{(z,w)}=\begin{pmatrix}
    \frac 1{y^2}\big(1-f+3(u^2+v^2)y^3f'\big) & 0 & 2f'vy^2 & -2f'uy^2 \\ 0 & \frac 1{y^2}\big(1-f+3(u^2+v^2)y^3f'\big) & 2f'uy^2 & 2f'vy^2 \\ 2f'vy^2 & 2f'uy^2 & \frac 4{3}f'y^3 & 0 \\ -2f'uy^2 & 2f'vy^2 & 0 &\frac 4{3}f'y^3 
    \end{pmatrix}
\end{equation*}
\begin{align*}
    \ome_{(z,w)}=&\bigg(-1+f-3f'y^3(u^2+v^2)\bigg)\frac{\dx\wedge\dy}{y^2}-\frac 4{3}f'y^3\du\wedge\devu \\&-2y^2f'\bigg(u(\dx\wedge\du+\dy\wedge\devu)+v(\du\wedge\dy-\devu\wedge\dx)\bigg)
\end{align*}
where the functions $f,f'$ are evaluated in: $$\frac 1{4}\vl\vl A_{(z,w)}\vl\vl^2_{j(z)}=\vl\vl q_{(z,w)}\vl\vl^2_{j(z)}=y^3(u^2+v^2) \ .$$
The matrix associated with the complex structure $\i_{(z,w)}: T_{(z,w)}\big(\Hyp\times\C\big)\to T_{(z,w)}\big(\Hyp\times\C\big)$ in the basis $\{\frac{\partial}{\partial x},\frac \partial{\partial y},\frac\partial{\partial u},\frac\partial{\partial v}\}$ is \begin{equation*}
    \i_{(i,w)}=\begin{pmatrix}
    J_0 & 0_{2\times 2} \\ 0_{2\times 2} & -J_0
    \end{pmatrix}=\begin{pmatrix}
    0 & -1 & 0 & 0 \\ 1 & 0 & 0 & 0 \\ 0 & 0 & 0 & 1 \\ 0 & 0 & -1 & 0
    \end{pmatrix} \ . 
\end{equation*}
We will explain how to obtain the expressions above for $\g$ and $\ome$ later in the section. We first show that these formulas define a non-degenerate pseudo-Riemannian metric and a closed $2$-form on $\Hyp \times \C$, thus concluding the proof of Theorem \ref{thmA}. 
\begin{lemma}\label{lem:nondegenerateclosed}
The tensor $\g_{(z,w)}$ is non-degenerate and the form $\ome_{(z,w)}$ is closed, for each $(z,w)\in\Hyp\times\C$.
\end{lemma}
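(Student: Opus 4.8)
The plan is to verify both statements directly from the explicit coordinate expressions for $\g_{(z,w)}$ and $\ome_{(z,w)}$ displayed above, which hold at every point $(z,w)\in\Hyp\times\C$. Throughout I abbreviate $s:=y^3(u^2+v^2)$, the common argument of $f$ and $f'$, so that $f=f(s)$ and $f'=f'(s)$; I will use the chain-rule data $\partial_y s=3y^2(u^2+v^2)$, $\partial_u s=2y^3u$, $\partial_v s=2y^3v$, together with the crucial observation that none of the coefficients of $\g$ or $\ome$ depends on $x$.

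For non-degeneracy I would compute $\det\g_{(z,w)}$ by exploiting its block form $\g=\begin{psmallmatrix} aI_2 & B \\ B^t & dI_2\end{psmallmatrix}$, where $a=\tfrac1{y^2}\big(1-f+3(u^2+v^2)y^3f'\big)$, $d=\tfrac43 f'y^3$ and $B=\begin{psmallmatrix} p & -q \\ q & p\end{psmallmatrix}$ with $p=2f'vy^2$, $q=2f'uy^2$. The key simplification is that $B$ is a conformal matrix, so $B^tB=(p^2+q^2)I_2$; hence the Schur complement $dI_2-\tfrac1a B^tB$ is a scalar multiple of the identity and $\det\g=\big(ad-(p^2+q^2)\big)^2$, an algebraic identity that persists even where $a=0$. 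A one-line computation gives $ad-(p^2+q^2)=\tfrac43 f'y(1-f)$, whence $\det\g=\tfrac{16}9(f')^2y^2(1-f)^2$. By hypothesis (i) we have $f'<0$, while $y>0$ on $\Hyp$ and $1-f\ge 1>0$ since $f\le 0$; therefore $\det\g>0$ and $\g$ is non-degenerate.

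For closedness I would expand $\ome$ in the basis $\{\dx\wedge\dy,\dx\wedge\du,\dx\wedge\devu,\dy\wedge\du,\dy\wedge\devu,\du\wedge\devu\}$, reading off the six coefficients (for instance $P:=\tfrac1{y^2}\big(-1+f-3f'y^3(u^2+v^2)\big)$ on $\dx\wedge\dy$, the coefficient $-2y^2f'u$ on $\dx\wedge\du$, and $-\tfrac43 f'y^3$ on $\du\wedge\devu$). Since $\mathrm{d}\ome$ is a $3$-form on a $4$-manifold it has only the four components along $\dx\wedge\dy\wedge\du$, $\dx\wedge\dy\wedge\devu$, $\dx\wedge\du\wedge\devu$ and $\dy\wedge\du\wedge\devu$, and because no coefficient depends on $x$ every term produced by $\partial_x$ drops out immediately. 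Evaluating the surviving partials by the chain rule, the contributions organize into terms proportional to $f'$ and to $f''$: the first two components vanish because $\partial_u P$ (resp.\ $\partial_v P$) coincides with the $y$-derivative of the relevant mixed coefficient, which enters the component with the opposite sign; the component $\dx\wedge\du\wedge\devu$ vanishes by the $u\leftrightarrow v$ symmetry of its two $f''$-terms; and the component $\dy\wedge\du\wedge\devu$ is the only genuinely three-term cancellation, where the $f''$-terms sum to $-4y^5(u^2+v^2)f''+4y^5(u^2+v^2)f''=0$ and the $f'$-terms to $-4y^2f'+2y^2f'+2y^2f'=0$.

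The computations are elementary and the only real challenge is bookkeeping. For the determinant the decisive point is recognizing the conformal block structure that collapses the Schur complement to a scalar; without it one confronts a raw $4\times4$ determinant. For closedness the delicate step is the $\dy\wedge\du\wedge\devu$ component, the single place where the $f''$-term and three separate $f'$-terms must cancel at once, so that correctly tracking the factors $\partial_\bullet s$ is where an error is most likely to arise. I note that closedness needs nothing about $f$ beyond smoothness, whereas non-degeneracy uses precisely hypothesis (i) together with $f\le 0$.
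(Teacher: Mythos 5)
Your proposal is correct and follows essentially the same route as the paper: both exploit the block structure of $\g_{(z,w)}$ (the paper via the commuting-blocks formula $\det\g=\det(AD-BC)$, you via the Schur complement with the conformal block $B$, arriving at the same value $\tfrac{16}{9}y^2(f')^2(1-f)^2$), and both verify closedness by computing the four coefficients of $\mathrm{d}\ome$ in coordinates and exhibiting exactly the cancellations you describe. The only differences are cosmetic bookkeeping choices, so there is nothing substantive to add.
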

\begin{proof}
The tensor $\g$ can be written as: $$\g_{(z,w)}=\begin{pmatrix}
A & B \\ C & D
\end{pmatrix}$$where $A,B,C,D$ are $2\times 2$ matrices with $$A=\frac 1{y^2}\big(1-f+3y^3(u^2+v^2)f'\big)\dsone_{2\times 2}, \qquad\qquad D=\frac 4{3}y^3f'\dsone_{2\times 2} \ .$$
Hence, $B$ and $C$ both commute with $A$ and $D$. In this case there is an easy formula for the determinant of the $4\times 4$ matrix, namely $\det(\g_{(z,w)})=\det(AD-BC)$, where $$AD=\frac 4{3}y\big(f'-ff'+3y^3(f')^2(u^2+v^2)\big)\dsone_{2\times 2} \qquad BC=4y^4(f')^2(u^2+v^2)\dsone_{2\times 2}$$which gives $$\det(g_{(z,w)})=\frac {16}{9}y^2(f')^2(1-f)^2 \ . $$
The right hand side of the last equation is always non-zero thanks to the property of the function $f$ (see Definition \ref{def:pseudoriemannian}), hence $\g_{(z,w)}$ is non-degenerate at each point $(z,w)\in\Hyp\times\C$.\newline It only remains to prove that $(\mathrm{d}\ome)_{(z,w)}=0$ for each $(z,w)\in\Hyp\times\C$. By using directly the expression in coordinate, we get: \begin{itemize}
    \item Coefficient $\dy\wedge\du\wedge\devu$:\begin{align*}
         &-4y^2f'\dy\wedge\du\wedge\devu-4y^5f''(u^2+v^2)\dy\wedge\du\wedge\devu-4y^5f''u^2\du\wedge\dy\wedge\devu \\& -2y^2f'\du\wedge\dy\wedge\devu-4y^5f''v^2\devu\wedge\du\wedge\dy-2y^2f'\devu\wedge\du\wedge\dy=0
    \end{align*} \item Coefficient $\dx\wedge\du\wedge\devu$:\begin{equation*}
        -4y^5f''uv\devu\wedge\dx\wedge\du+4y^5f''uv\du\wedge\devu\wedge\dx=0
    \end{equation*}\item Coefficient $\dx\wedge\dy\wedge\devu$:\begin{align*}
        &2yf'v\devu\wedge\dx\wedge\dy-6y^4f''u^2v\devu\wedge\dx\wedge\dy-6yf'v\devu\wedge\dx\wedge\dy \\ &-6y^4f''v^3\devu\wedge\dx\wedge\dy+4yf'v\dy\wedge\devu\wedge\dx+6y^4f''v(u^2+v^2)\dy\wedge\devu\wedge\dx=0
    \end{align*}
    \item Coefficient $\dx\wedge\dy\wedge\du$: \begin{align*}
        &2yf'u\du\wedge\dx\wedge\dy-6yf'u\du\wedge\dx\wedge\dy-6y^4f''u^3\du\wedge\dx\wedge\dy \\ &-6y^4f''uv^2\du\wedge\dx\wedge\dy-4yf'u\dy\wedge\dx\wedge\du-6y^4f''(u^2+v^2)\dy\wedge\dx\wedge\du=0
    \end{align*}
\end{itemize}
\end{proof}
Thanks to the expression in coordinates, it is easy to see that $\g$ is indeed a pseudo-Riemannian metric on $\Hyp\times\C$ (hence on $\deft$) since it is negative-definite when restricted to $\{0\}\times\C$ and it coincides with $g_{\Hyp}$ on $\Hyp\times\{0\}$. \\

In the following we will give an idea on how to compute $\g_{(i,w)}$ by using Definition \ref{def:pseudoriemannian} and the expression of the tensors in coordinates. Finally, by using the $\SL(2,\R)$-invariance we briefly sketch how to compute the tensor $\g$ at an arbitrary point of $\Hyp\times\C$.\newline In order to simplify the computation we will give the expression of the associated quadratic form. The first part of the quadratic form in the tensor formalism is $(1-f)\frac 1{2}\tr(\dot J^2)$, hence at the point $(i,w)$ we have: \begin{equation*}
    J=J_0=\begin{pmatrix}0 & -1 \\ 1 & 0
\end{pmatrix} \qquad \text{and} \qquad \dot J=\mathrm{d}_ij(\dot x,\dot y)=\begin{pmatrix}
\dot x & -\dot y \\ -\dot y & -\dot x
\end{pmatrix} \ . \end{equation*}
Thus, $(1-f)\frac 1{2}\tr(\dot J^2)=(1-f)\big(\dot x^2+\dot y^2\big)$. Moreover, using the expression in coordinates of $(\dot A_0)_{(i,w)}$ and $(\dot A_{\tr})_{(i,w)}$ we get: \begin{align*}
    &(\ast\dot A_0)_{(i,w)}=\begin{pmatrix}
    \dot u+u\dot y+v\dot x & -u\dot x+\dot v+v\dot y \\ -u\dot x+\dot v+v\dot y & -\dot u-u\dot y-v\dot x
    \end{pmatrix}\dy_{0}-\begin{pmatrix}
    \dot v+2(v\dot y-u\dot x) & -\dot u-2(u\dot y+v\dot x) \\ -\dot u-2(u\dot y+v\dot x) & -\dot v+2(u\dot x-v\dot y)
    \end{pmatrix}\dx_{0}, \\ &(\ast\dot A_{\tr})_{(i,w)}=\frac 1{2}\begin{pmatrix}
    -u\dot y-v\dot x & 0 \\ 0 & -u\dot y-v\dot x
    \end{pmatrix}\dy_{0}-\frac 1{2}\begin{pmatrix}
    u\dot x-v\dot y & 0 \\ 0 & u\dot x-v\dot y
    \end{pmatrix}\dx_{0} \ .
\end{align*}Hence, \begin{align*}
    &\frac{1}{3}\tr(\dot A_0\wedge\ast\dot A_0)=\frac 4{3}(\dot u^2+\dot v^2)+\frac{10}{3}(u^2+v^2)(\dot x^2+\dot y^2)+4\big(u(\dot u\dot y-\dot x\dot v)+v(\dot y\dot v+\dot u\dot x)\big) \\ &\frac 1{6}\tr(\dot A_{\tr}\wedge\ast\dot A_{\tr})=\frac 1{3}(u^2+v^2)(\dot x^2+\dot y^2) \ .
\end{align*}The final expression for the quadratic form associated with $\g$ and computed at $(i,w)$ is thus
\begin{equation}\label{quadraticform}\big(1-f+3f'(u^2+v^2)\big)\big(\dot x^2+\dot y^2\big)+\frac 4{3}f'(\dot u^2+\dot v^2)+4f'\big(u(\dot u\dot y-\dot x\dot v)+v(\dot y\dot v+\dot u\dot x)\big)\end{equation}It is now clear how to recover the above expression of $\g_{(i,w)}$ from (\ref{quadraticform}). In order to give the precise expression of $\g$ at an arbitrary point $(z,\widetilde w)\in\Hyp\times\C$ we need to use the $\SL(2,\R)$-invariance of $\g$ and the fact that the $\SL(2,\R)$-action on $\Hyp$ is transitive. In fact, we can find a $P\in\SL(2,\R)$ such that $P\cdot z=i$ for $z\in\Hyp$, where $P\cdot z$ is the action via M\"obius transformations. This matrix $P$ is explicitly given by$$
P=\begin{pmatrix}
\frac 1{\sqrt y} & -\frac{x}{\sqrt y} \\ 0 & \sqrt y
\end{pmatrix} \ .$$ In particular, the point $\widetilde w=\widetilde u+i\widetilde v\in\C$ is determined by $P\cdot (z,\widetilde w)=(i,w)$. In fact,  \begin{equation}\label{eq:sum_of_squares}
    u^2+v^2=y^3(\widetilde u^2+\widetilde v^2) \ .
\end{equation}
By using the $\SL(2,\R)$-invariance we get $$\g_{(z,\widetilde w)}(\cdot,\cdot)=(P^*\g)_{(z,\widetilde w)}(\cdot,\cdot)=\g_{(i,w)}(\mathrm d_{(z,\widetilde w)}P\cdot,\mathrm d_{(z,\widetilde w)}P\cdot), $$
where the differential of $P$ at $(z,\widetilde w)$ is given by
\begin{align*}&\mathrm d_{(z,\widetilde w)}P\bigg(\frac\partial{\partial x}\bigg)=\frac 1{y}\frac\partial{\partial x}\quad &\mathrm d_{(z,\widetilde w)}P\bigg(\frac\partial{\partial y}\bigg)&=\frac 1{y}\frac\partial{\partial y} \\ &\mathrm d_{(z,\widetilde w)}P\bigg(\frac\partial{\partial u}\bigg)=y^{\frac 3{2}}\frac\partial{\partial u} \quad &\mathrm d_{(z,\widetilde w)}P\bigg(\frac\partial{\partial v}\bigg)&=y^{\frac 3{2}}\frac\partial{\partial v} \ . \end{align*}
Now we have all the tools to compute $\g$ at a point $(z,\widetilde w)$. For instance, \begin{align*}
    \g_{(z,\widetilde w)}\bigg(\frac\partial{\partial x},\frac\partial{\partial x}\bigg)&=\g_{(i,w)}\bigg(\mathrm d_{(z,\widetilde w)}P\bigg(\frac\partial{\partial x}\bigg),\mathrm d_{(z,\widetilde w)}P\bigg(\frac\partial{\partial x}\bigg)\bigg) \\&=\frac 1{y^2}\g_{(i,w)}\bigg(\frac\partial{\partial x},\frac\partial{\partial x}\bigg) \\ &=\frac 1{y^2}\bigg(1-f+3f'(u^2+v^2)\bigg) \\
    &=\frac 1{y^2}\bigg(1-f+3y^{3}f'(\widetilde u^2+\widetilde v^2)\bigg)\ \tag{Equation \ref{eq:sum_of_squares}} 
\end{align*}With a similar computation one can recover all the entries of the tensor $\g$ at every $(z,w)\in\Hyp\times\C$.

\subsection{The circle action and the \texorpdfstring{$\SL(2,\R)$}{SL(2,R)}-moment map}
In this section we study the behavior of the circle and $\SL(2,\R)$-action on $\deft$ with respect to the pseudo-K\"ahler structure $(\g,\i,\ome)$. Before analyzing the two actions in detail let us recall some basic definitions regarding symplectic actions and moment maps.\begin{definition}
Let $(M,\omega)$ be a symplectic manifold and let $G$ be a Lie group acting on $M$. Let $\psi_g:M\to M$ be the map $\psi_g(p):=g\cdot p$, then we say the group $G$ acts by symplectomorphisms on $(M,\omega)$ if $\psi_g^*\omega=\omega$ for all $g\in G$.
\end{definition}  
\begin{definition}\label{def:momentmap}
Let $G$ be a Lie group, with Lie algebra $\mathfrak g$, acting on a symplectic manifold $(M,\omega)$ by symplectomorphisms. We say the action is \emph{Hamiltonian} if there exists a smooth function $\mu:M\to\mathfrak g^*$ satisfying the following properties: \begin{itemize}
    \item[(i)] The function $\mu$ is equivariant with respect to the $G$-action on $M$ and the co-adjoint action on $\mathfrak g^*$, namely \begin{equation}
        \mu_{g\cdot p}=\Ad^*(g)(\mu_p):=\mu_p\circ \Ad(g^{-1})\in\mathfrak g^* \ .
    \end{equation}
    \item[(ii)]Given $\xi\in\mathfrak g$, let $X_\xi$ be the vector field on $M$ generating the action of the $1$-parameter subgroup generated by $\xi$, i.e. $X_\xi=\frac{\mathrm d}{\mathrm dt}\text{exp}(t\xi)\cdot p |_{t=0}$. Then, for every $\xi\in\mathfrak g$ we have\begin{equation}
        \mathrm d\mu^{\xi}=\iota_{X_\xi}\omega=\omega(X_\xi,\cdot)
    \end{equation}where $\mu^\xi:M\to\R$ is the function $\mu^\xi(p):=\mu_p(\xi)$.
\end{itemize}A map $\mu$ satisfying the two properties above is called a \emph{moment map} for the Hamiltonian action.
\end{definition}
Let us start with the study of the $S^1$-action on $\deft$. First of all, we need to understand how the circle action $q\mapsto e^{-i\theta}q$ on $Q^3_0(\Teich^c(T^2))$ changes under the bijection with $\pick$ (see Corollary \ref{cor:bijectioncubicpick}). In other words, if $C$ is the Pick tensor associated with the $J$-holomorphic cubic differential $q$, namely $C=\Ree(q)$, then we need to find the expression of the new Pick form $\widetilde A$ associated with $\widetilde C=\Ree(e^{-i\theta}q)$. We have $q=C(\cdot,\cdot,\cdot)-iC(\cdot,J\cdot,\cdot)$ since $(J,C)\in\pick$ by definition. In particular, the expression
$$e^{-i\theta}q=\cos\theta C(\cdot,\cdot,\cdot)+\sin\theta C(\cdot,J\cdot,\cdot)+i\bigg(\cos\theta C(\cdot,J\cdot,\cdot)-\sin\theta C(\cdot,\cdot,\cdot)\bigg)$$
implies that $\widetilde C(\cdot,\cdot,\cdot)=\cos\theta C(\cdot,\cdot,\cdot)+\sin\theta C(\cdot,J\cdot,\cdot)$ and the new Pick form is $$\widetilde A(\cdot)=g_J^{-1}\widetilde C=\cos\theta A(\cdot)-\sin\theta A(\cdot)J \ . $$
The last equation gives an induced action on $\pick$ by setting
\begin{align*}
    \Psi_\theta \ :& \ \pick\longrightarrow\pick \\ &(J,A)\mapsto (J,\cos\theta A(\cdot)-\sin\theta A(\cdot)J) \ . \ 
\end{align*}
It is clear from the definition that $\Psi_\theta$ preserves the $0$-section in $\pick$ (seen as a vector bundle over $\almost$), hence it induces an $S^1$-action on $\deft$ which will still be denoted by $\Psi_\theta$ by abuse of notation. Before stating and proving the main result, we need a technical lemma regarding the derivative of the norm of the Pick form.
\begin{lemma}\label{lem:derivativepickform}
Let $(J,A)\in\pick$, then \begin{equation}
    \big(\vl\vl A\vl\vl_J^2\big)'=2\langle A,\dot A_0\rangle \ . 
\end{equation}
\end{lemma}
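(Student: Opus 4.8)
The plan is to differentiate $\vl\vl A\vl\vl_J^2=\langle A,A\rangle_J$ along a curve through $(J,A)$ with velocity $(\dot J,\dot A)\in T_{(J,A)}\pick$, keeping careful track of the fact that the pairing $\langle\cdot,\cdot\rangle_J$ of (\ref{def:scalarproductpick}) itself depends on $J$ through the Hodge star. First I would record an invariant expression for the scalar product that isolates this dependence. On an oriented surface the Hodge star on $1$-forms is $\ast\beta=-\beta\circ J$, so it is determined by $J$ alone; since all tensors here are translation-invariant and $\int_{T^2}\rho=1$ with $\rho=\dvol_{g_J}$, the definition (\ref{def:scalarproductpick}) collapses to the pointwise formula
\[
\langle A,B\rangle_J=-\tr\bigl(A(X_0)B(JY_0)\bigr)+\tr\bigl(A(Y_0)B(JX_0)\bigr),
\]
where $\{X_0,Y_0\}$ is the fixed $\rho$-dual basis $\{\partial_{x_0},\partial_{y_0}\}$ (evaluation on the $g_J$-orthonormal frame $e_2=Je_1$ recovers $\tr(A_1B_1+A_2B_2)$, as a cross-check). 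In this form the only dependence on the deformation parameter that sits outside of $A$ is the explicit $J$ in the last slot.

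Differentiating and using the symmetry of the pairing, I would obtain
\[
\bigl(\vl\vl A\vl\vl_J^2\bigr)'=2\langle A,\dot A\rangle_J+T,\qquad T:=-\tr\bigl(A(X_0)A(\dot JY_0)\bigr)+\tr\bigl(A(Y_0)A(\dot JX_0)\bigr),
\]
the term $T$ being the contribution of the varying complex structure. The first summand already gives the desired answer: since every endomorphism $A(X)$ is trace-free (property (b) of the Remark following Corollary \ref{cor:bijectioncubicpick}), pairing $A$ against the pure-trace part $\dot A_{\tr}$ annihilates it, so $\langle A,\dot A\rangle_J=\langle A,\dot A_0\rangle_J$ and the first summand equals $2\langle A,\dot A_0\rangle$.

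The main obstacle is therefore to show that the correction $T$ vanishes, and this is where the holomorphic structure of $\pick$ enters. Because $T$ is antisymmetric in $(X_0,Y_0)$ it is a multiple of $\rho$, so I may evaluate it on the frame $\{e_1,e_2=Je_1\}$; writing $A_i:=A(e_i)$ and $\dot J=\begin{pmatrix} a & b\\ b & -a\end{pmatrix}$ (recall $T_J\almost$ consists of the trace-free, $g_J$-symmetric endomorphisms), a short expansion yields $T=b\bigl(\tr(A_2^2)-\tr(A_1^2)\bigr)+2a\,\tr(A_1A_2)$. At this point I would invoke Lemma \ref{lem:decompositioncomplexstructure}(1), namely $A_1=JA_2$, together with the fact that $A_1,A_2\in T_J\almost$ are trace-free and $g_J$-symmetric, hence anticommute with $J$ and square to a scalar multiple of the identity. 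These force $\tr(A_1^2)=\tr(JA_2JA_2)=\tr(A_2^2)$ and $\tr(A_1A_2)=\tr(JA_2^2)=0$, so $T=0$ and the identity follows. As an independent verification one could instead differentiate the closed form $\vl\vl A_{(z,w)}\vl\vl^2=4y^3(u^2+v^2)$ and match it against $2\langle A,\dot A_0\rangle$ computed from the coordinate formulas for $A$ and $\dot A_0$; the conceptual argument above is preferable because it explains precisely \emph{why} the $\dot J$-terms drop out, this being the manifestation of the cubic-holomorphicity encoded in $D^3(\almost)$.
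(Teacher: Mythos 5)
Your proof reaches the correct identity and does so by a genuinely different route from the paper's, but as written it has one gap in the product-rule step. What you do differently, and correctly: you isolate the dependence of the pairing itself on $J$ (through the Hodge star, via the pointwise formula on the fixed $\rho$-unimodular basis) and you prove that its contribution
\begin{equation*}
T=-\tr\big(A(X_0)A(\dot J Y_0)\big)+\tr\big(A(Y_0)A(\dot J X_0)\big)
\end{equation*}
vanishes, using $A_1=JA_2$ from Lemma \ref{lem:decompositioncomplexstructure} and the fact that trace-free symmetric endomorphisms square to a multiple of the identity. This computation is right, and it is a term the paper's own proof never writes down: there the derivative is expanded as $\int_{T^2}\big(\tr(A'\wedge\ast A)+\tr(A\wedge\ast A')\big)$ with $\ast$ treated as if it did not vary, so your argument makes explicit, and justifies, an elision in the published proof.

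The gap: along a curve $(J_t,C_t)$ the Pick form is $A_t=g_{J_t}^{-1}C_t$, so the derivative of the entries occupying the two $A$-slots of your formula is \emph{not} $\dot A$ but
\begin{equation*}
A'=(g_J^{-1}C)'=-g_J^{-1}\dot g_J g_J^{-1}C+g_J^{-1}\dot C=J\dot J A+\dot A ,
\end{equation*}
which is precisely the first line of the paper's proof (using (\ref{dotmetric})). Note that you are necessarily working with the convention $\dot A=g_J^{-1}\dot C$ of Lemma \ref{lem:tangentpickform}: if $\dot A$ meant the honest derivative $A'$, it would be automatically trace-free and your step discarding $\langle A,\dot A_{\tr}\rangle$ would be vacuous, whereas the lemma's $\dot A_0,\dot A_{\tr}$ refer to $g_J^{-1}\dot C$. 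Consequently the correct expansion is $\big(\vl\vl A\vl\vl_J^2\big)'=2\langle A,\dot A\rangle+2\langle A,J\dot J A\rangle+T$, and your decomposition silently drops the middle term; your sentence identifying $T$ as ``the contribution of the varying complex structure'' is therefore inaccurate, since $J$ also varies inside $A$ itself. The dropped term does vanish, by an argument in exactly your own style: $\tr(A_iJ\dot J A_i)=\tr(J\dot J A_i^2)=-\det(A_i)\tr(J\dot J)=0$, because $A_i^2=-\det(A_i)\dsone$ and $\tr(J\dot J)=0$ (this is also how the paper kills it, via anticommutation with $J$). With that one line added your proof is complete; it is then complementary to the paper's, each argument treating explicitly the correction term that the other ignores, both of which happen to vanish.
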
\begin{proof}
 During the proof of this lemma we use the notation of the previous section, namely $A=A_1e_1^*+A_2e_2^*$ and $\dot A=\dot A_1e_1^*+\dot A_2e_2^*$, with $\{e_1,e_2\}$ a $g_J$-orthonormal basis of $\R^2$ and $\{e_1^*,e_2^*\}$ its dual basis. Recall that the relation between the Pick form $A$ and the Pick tensor $C$ is $A=g_J^{-1}C$, hence \begin{align*}
     A'&=(g_J^{-1}C)' \\&=-g_j^{-1}\dot g_Jg_J^{-1}C+g_J^{-1}\dot C \\&=J\dot JA+\dot A \tag{$\dot g_J(\cdot,\cdot)=-g_J(\cdot,J\dot J\cdot)$}
 \end{align*}In particular, $(A_i)'=J\dot JA_i+\dot A_i$, for each $i=1,2$. Thus, \begin{align*}
     \big(\vl\vl A\vl\vl_J^2\big)'&=\int_{T^2}\bigg(\tr(A'\wedge\ast A)+\tr(A\wedge\ast A')\bigg) \\ &=\int_{T^2}\bigg(2\tr(A_1\dot A_1+A_2\dot A_2)+\tr(J\dot J((A_1)^2+(A_2)^2)+A_1J\dot JA_1+A_2J\dot JA_2)\bigg)e_1^*\wedge e_2^* \\ &=2\int_{T^2}\tr(A_1\dot A_1+A_2\dot A_2)e_1^*\wedge e_2^* \ ,
 \end{align*}
 where in the second line we used the fact that, since both $J\dot J A_iA_i$ and $A_iJ\dot JA_i$ anticommute with $J$ for each $i=1,2$, the terms $\tr(J\dot JA_iA_i)=\tr(A_iJ\dot JA_i)$ vanish for each $i=1,2$. Thus, we get $$\big(\vl\vl A\vl\vl_J^2\big)'=2\langle A,\dot A\rangle \ . $$
 Finally, by writing $\dot A=\dot A_0+\dot A_{\tr}$ where $$\dot A_{\tr}=\frac 1{2}\tr(\dot A_1)\dsone e_1^*+\frac 1{2}\tr(\dot A_2)\dsone e_2^*$$ we obtain $$\langle A,\dot A_{\tr}\rangle=\frac 1{2}\int_{T^2}\tr(\tr(\dot A_1)A_1+\tr(\dot A_2)A_2)e_1^*\wedge e_2^*$$
and this last term is equal to zero since the $A_i$'s are trace-less endomorphisms. 
\end{proof}
\begin{manualtheorem}B
The $S^1$-action on $\deft$ is Hamiltonian with respect to $\ome$ and it satisfies \begin{equation*}
     \Psi_\theta^{*}\ome=\ome \qquad \text{and} \qquad \Psi_\theta^{*}\g=\g \ . 
\end{equation*} The Hamiltonian function is given by $H\big(\normpick\big)=\frac 2{3}f\big(\normpick\big). $
\end{manualtheorem}
\begin{proof}
 The infinitesimal generator of the action is $$X_{(J,A)}=\frac{\mathrm{d}}{\mathrm{d}\theta}_{\big|_{\theta=0}}\Psi_\theta(J,A)=(0,-AJ).$$
Hence, \begin{align*}
(\iota_X\ome)_{(J,A)}(\dot J,\dot A)&=\ome_{(J,A)}\big((\dot J,\dot A), (0, AJ)\big) \\ &=\g_{(J,A)}\big((\dot J,\dot A),\i(0,AJ)\big) \\&=\g_{(J,A)}\big((\dot J,\dot A),(0,-AJ^2)\big) \\ &=\frac{f'}{3}\langle\dot A_0,A\rangle_J \  \tag{$A \ \text{is} \ g_J-\text{traceless}$}
\end{align*}
Now we compute the differential of $H\big(\vl\vl A\vl\vl_J^2\big)=\frac 2{3}f\big(\normpick\big)$. This is given by
\begin{align*}
    \mathrm{d}_{(J,A)}H(\dot J,\dot A)&=\frac{f'}{6}\big(\vl\vl A\vl\vl_J^2\big)' \\ &=\frac{f'}{3}\langle A,\dot A_0\rangle_J \tag{\text{Lemma} \ \ref{lem:derivativepickform}}
\end{align*}
Thus, the $S^1$-action is Hamiltonian. The symplectic form $\ome$ is preserved by this action since its Lie derivative along the infinitesimal generator $X$ is zero: $$\Dlie_X\ome=\mathrm{d}(\iota_X\ome)+\iota_X\mathrm{d}\ome=\mathrm{d}^2H=0$$by Cartan's magic formula.\newline It only remains to prove that $\Psi_\theta$ is an isometry for $\g$. First of all we compute the differential of the action: $$\mathrm{d}_{(J,A)}\Psi_\theta(\dot J,\dot A)=\big(\dot J,\cos\theta\dot A(\cdot)-\sin\theta(\dot A(\cdot)J+A(\cdot)\dot J)\big) \ .$$
Then, we notice that the circle action preserves the norm of the Pick form, namely $\vl\vl\cos\theta A-\sin\theta AJ\vl\vl^2_J=\vl\vl A\vl\vl_J^2$. In fact, \begin{equation}\label{proofthmB}\vl\vl\cos\theta A-\sin\theta AJ\vl\vl^2_J=\cos^2\theta\vl\vl A\vl\vl_J^2+\sin^2\theta\underbrace{\vl\vl AJ\vl\vl_J^2}_{(a)}-2\cos\theta\sin\theta\underbrace{\langle A,AJ\rangle_J}_{(b)} \ . \end{equation}
The term $(a)$ is \begin{align*}
    \vl\vl AJ\vl\vl_J^2&=\int_{T^2}\tr(A_1JA_1J+A_2JA_2J) \ e_1^*\wedge e_2^*\\ &=\int_{T^2}\tr(A_1A_1+A_2A_2) \ e_1^*\wedge e_2^*\tag{$A_i\in T_J\almost \ \text{and} \ J^2=-\dsone$} \\ &=\vl\vl A\vl\vl_J^2 \ .
\end{align*}The term $(b)$ is \begin{align*}
    \langle A,AJ\rangle_J=\int_{T^2}\tr(A_1A_1J+A_2A_2J) \ e_1^*\wedge e_2^*
\end{align*}but $\tr(A_iJA_i)=\tr(JA_iA_i)=\tr(A_iA_iJ)=-\tr(A_iJA_i)$ for $i=1,2$, hence the term $(b)$ is zero. In the first two equalities we used the trace symmetry and in the third one the fact that $A_i\in T_J\almost$. The circle action $\Psi_\theta$ preserves the pseudo-Riemannian metric $\g$ if and only if the following holds
$$\g_{(J,A)}\big((\dot J,\dot A); (\dot J',\dot A')\big)=\g_{\Psi_\theta(J,A)}\big(\mathrm{d}_{(J,A)}\Psi_\theta(\dot J,\dot A); \mathrm{d}_{(J,A)}\Psi_\theta(\dot J',\dot A')\big) \ . $$ Let us define $\psi_\theta$ to be the second component of the differential of the circle action, namely $$\psi_\theta(\dot J,\dot A):=\cos\theta\dot A-\sin\theta(\dot AJ+A\dot J).$$
Then, in order to conclude the proof, we need to show
\begin{itemize}
    \item[(1)] $\langle\psi_\theta(\dot J,\dot A)_0, \psi_\theta(\dot J', \dot A')_0\rangle=\langle\dot A_0,\dot A_0'\rangle ;$
    \item[(2)]$\langle\psi_\theta(\dot J,\dot A)_{\tr}, \psi_\theta(\dot J', \dot A')_{\tr}\rangle=\langle\dot A_{\tr},\dot A_{\tr}'\rangle .$
\end{itemize}The left hand side term of $(1)$ can be written as
\begin{align*}\cos^2\theta\langle\dot A_0,\dot A_0'\rangle&+\sin^2\theta\langle (\dot AJ+A\dot J)_0, (\dot A'J'+A'\dot J')_0\rangle \\ &+\cos\theta\sin\theta\big(\langle\dot A_0,-(\dot A'J+A\dot J')_0\rangle+\langle-(\dot AJ+A\dot J)_0,\dot A_0'\rangle\big) \ .\end{align*}The coefficient of $\sin^2\theta$ has already been calculated (see proof of Theorem \ref{thm:pseudokahlerpick}) and it is equal to $\langle\dot A_0,\dot A_0'\rangle$. The coefficient of $\cos\theta\sin\theta$ is equal to
\begin{align*}
    \int_{T^2}\tr(-(\dot A_1)_0(\dot A_1')_0J-(\dot A_2)_0(\dot A_2')_0J-(\dot A_1)_0J(\dot A_1')_0-(\dot A_2)_0J(\dot A_2')_0) \ e_1^*\wedge e_2^*
\end{align*}and it vanishes since $(\dot A_i)_0,(\dot A_i')_0\in T_J\almost$ for each $i=1,2$.\newline The left hand side term of $(2)$ can be written as \begin{align*}\cos^2\theta\langle\dot A_{\tr},\dot A_{\tr}'\rangle&+\sin^2\theta\langle (\dot AJ+A\dot J)_{\tr}, (\dot A'J'+A'\dot J')_{\tr}\rangle \\ &+\cos\theta\sin\theta\big(\langle\dot A_{\tr},-(\dot A'J+A\dot J')_{\tr}\rangle+\langle-(\dot AJ+A\dot J)_{\tr},\dot A_{\tr}'\rangle\big) \ . \end{align*}
The coefficient of $\sin^2\theta$ has already been calculated (see proof of Theorem \ref{thm:pseudokahlerpick}) and it is equal to $\langle \dot A_{\tr},\dot A_{\tr}'\rangle$. By using Lemma \ref{lem:decompositioncomplexstructure}, the coefficient of $\cos\theta\sin\theta$ can be written as \begin{align*}
    \frac 1{2}\int_{T^2}\tr(-(\dot A_1)_{\tr}\tr(\dot A_2')\dsone+(\dot A_2)_{\tr}\tr(\dot A_1')\dsone-(\dot A_1')_{\tr}\tr(\dot A_2)\dsone+(\dot A_2')_{\tr}\tr(\dot A_1)\dsone)e_1^*\wedge e_2^* \ .
\end{align*}Since $(\dot A_i)_{\tr}=\frac 1{2}\tr(\dot A_i)\dsone$ and $(\dot A_i')_{\tr}=\frac 1{2}\tr(\dot A_i')\dsone$ for each $i=1,2$, the first term of the above expression cancels out with the last one and the same happens for the second and third one. Finally, the term of $\cos\theta\sin\theta$ vanishes and we obtain the thesis. 
\end{proof}
Now we will study the $\SL(2,\R)$-action and its moment map. Recall that if $P\in\SL(2,\R)$ and $(J,A)\in\pick$, then $$P\cdot (J,A)=(PJP^{-1},PA(P^{-1}\cdot)P^{-1}) \ . $$ In particular, this action preserves the $0$-section in $\pick$ (seen as a vector bundle over $\almost$), hence it induces an $\SL(2,\R)$-action on $\deft$, which will be denoted by $\Phi_P:\deft\to\deft$. Thanks to Lemma \ref{lem:invarianza}, it is clear that $\Phi_P^*\ome=\ome$, i.e. $\SL(2,\R)$ acts by symplectomorphisms on $\deft$. Thus, it makes sense to ask if this action is Hamiltonian and, if this is the case, to give the expression of the moment map. The Lie algebra of $\SL(2,\R)$ is given by $\Lsl(2,\R)=\{X\in\End(\R^2) \ | \ \tr(X)=0\}$ with Lie bracket $[X,Y]=XY-YX$. In particular any $X\in\Lsl(2,\R)$ can be decomposed as $X=X^a+X^s$, where $X^s$ is a trace-less $g_J$-symmetric matrix and $X^a$ is a trace-less $g_J$-skew-symmetric matrix. In particular, $X^s\in T_J(\almost)$ and $X^a=-\frac 1{2}\tr(JX)J$, since it commutes with $J$.
\begin{manualtheorem}C
The $\SL(2,\R)$-action on $\deft$ is Hamiltonian with respect to $\ome$ with moment map $\mu:\deft\to\Lsl(2,\R)^*$ given by
\begin{equation}
    \mu_{(J,A)}(X)=\bigg(1-f\big(\normpick\big)\bigg)\tr(JX)
\end{equation}for all $X\in\Lsl(2,\R)$.
\end{manualtheorem}
\begin{proof}
     $\bullet$ \underline{\emph{The infinitesimal generator of the action}}. Let $X\in\Lsl(2,\R)$ and let $$V_X(J,A)=\frac{\mathrm d}{\mathrm dt}(e^{tX}Je^{-tX}, (e^{-tX})^{*}C)|_{t=0}$$ be its infinitesimal generator. The first component is equal to $XJ-JX=[X,J]$. For the second component define $P_t:=e^{tX}$, then we need to compute the Pick form corresponding to \begin{align*}
         \frac{\mathrm d}{\mathrm dt}C((P_t)^{-1}\cdot,(P_t)^{-1}\cdot,(P_t)^{-1}\cdot)|_{t=0}=-C(X\cdot,\cdot,\cdot)-C(\cdot,X\cdot,\cdot)-C(\cdot,\cdot,X\cdot) \ . 
     \end{align*}
     If $\widetilde C(\cdot,\cdot,\cdot)$ is defined as the right hand side term of the equation above, then the new Pick form $\widetilde A$ satisfies \begin{align*}
         g_J(\widetilde A(Y)Z,W)&=\widetilde C(Y,Z,W) \\ &=-C(X\cdot Y,Z,W)-C(Y,X\cdot Z,W)-C(Y,Z,X\cdot W) \\ &=-g_J(A(X\cdot Y)Z,W)-g_J(A(Y)X\cdot Z,W)-g_J(A(Y)Z,X\cdot W) \\ &=-g_J(A(X\cdot Y)Z+A(Y)X\cdot Z+X^*\cdot A(Y)Z, W)
     \end{align*}for all $Y,Z,W\in\Gamma(T\R^2)$, where $X^*$ denotes the adjoint of $X$ with respect to $g_J$. Hence, we have 
     $$\widetilde A(\cdot)=-A(X\cdot)-AX-X^*A \ .$$ By using the decomposition $X=X^s+X^a$ in its symmetric and skew-symmetric part, we can write the second component of $V_X(J,A)$ as: \begin{equation}\label{decompositiontracepart}
     \underbrace{-A(X^a\cdot)-A(X^s\cdot)+[X^a,A]}_{\text{trace-less part}}-\underbrace{(AX^s+X^sA)}_{\text{trace part}} \ . \end{equation}
    $\bullet$ \underline{$\mu$ \emph{is equivariant}}. Let $P\in\SL(2,\R)$ and $X\in\Lsl(2,\R)$, then \begin{align*}
         \mu_{P\cdot (J,A)}(X) &=\bigg(1-f\bigg(\frac 1{4}\vl\vl P\cdot A\vl\vl_{P\cdot J}^2\bigg)\bigg)\tr(PJP^{-1}X) \\ &=\bigg(1-f\big(\normpick\big)\bigg)\tr(JP^{-1}XP) \\ &=\mu_{(J,A)}\circ\Ad(P^{-1})(X) \\ &=\Ad^*(P)(\mu_{(J,A)})(X)
     \end{align*}where in the second equality we used Lemma \ref{lem:invarianza} and the trace symmetry. \\
     $\bullet$ \underline{$\mu$ \emph{satisfies property (ii) in Definition} \ref{def:momentmap}}.
     Let $\mu^X:\deft\to\R$ be the map $$\mu^X(J,A)=\bigg(1-f\big(\normpick\big)\bigg)\tr(JX) \ , $$ then \begin{align*}
         \mathrm{d}_{(J,A)}\mu^X(\dot J,\dot A)&=-\frac 1{4}\bigg(\vl\vl A\vl\vl^2_J\bigg)'f'\big(\normpick\big)\tr(JX)+\bigg(1-f\big(\normpick\big)\bigg)\tr(\dot JX) \\ &=-\frac 1{2}\langle A,\dot A_0\rangle f'\big(\normpick\big)\tr(JX)+\bigg(1-f\big(\normpick\big)\bigg)\tr(\dot JX)
     \end{align*}where we used Lemma \ref{lem:derivativepickform} in the second equality. Now let $V_X$ be the infinitesimal generator of $X$, then \begin{equation}\label{iotaomega}\begin{aligned}
         \iota_{V_X}\ome(\dot J,\dot A)&=\g(V_X(J,A), \i(\dot J,\dot A)) \\&=\frac{f-1}{2}\tr([X,J]J\dot J)+\frac{f'}{3}\langle [X^a, A]-A(X\cdot), (-\dot AJ-A\dot J)_0\rangle \\ &-\frac {f'}{6}\langle AX^s+X^sA, (\dot AJ+A\dot J)_{\tr}\rangle
     \end{aligned}\end{equation}where we used the decomposition in (\ref{decompositiontracepart}). The first term of $\iota_{V_X}\ome(\dot J,\dot A)$ is
     \begin{align*}
         \frac{1-f}{2}\tr(\dot JX+JXJ\dot J)=(1-f)\tr(\dot J X)
     \end{align*}by trace symmetry and $\dot J J+J\dot J=0$. It only remains to show that the sum of the second and third term of $\iota_{V_X}\ome(\dot J,\dot A)$ is equal to $-\frac 1{2}f'\langle A,\dot A_0\rangle\tr(JX)$. The coefficient of $\frac{f'}{3}$ in (\ref{iotaomega}) can be written as \begin{equation*}
         \underbrace{\langle A(X^s\cdot), (\dot AJ+A\dot J)_0\rangle}_{(a)}+\underbrace{\langle [X^a, A]-A(X^a\cdot), (-\dot AJ-A\dot J)_0\rangle}_{(b)} \ .
     \end{equation*}Moreover, by using Lemma \ref{lem:decompositioncomplexstructure} the term with $[X^a,A]$ in $(b)$ becomes \begin{align*}
         \frac 1{2}\int_{T^2}\tr(JX)\tr(JA_1(\dot A_1)_0J+JA_2(\dot A_2)_0J-A_1J(\dot A_1)_0J-A_2J(\dot A_2)_0J) e_1^*\wedge e_2^* \ . 
     \end{align*}
     Using that $A_i,(\dot A_i)\in T_J\almost$ for each $i=1,2$, the above term reduces to $-\tr(JX)\langle A,\dot A_0\rangle$. Notice that $-A(X^a\cdot)=\frac 1{2}\tr(JX)A(\cdot)J$, since $C(J\cdot,\cdot,\cdot)=C(\cdot,J\cdot,\cdot)$. Hence, the term with $-A(X^a\cdot)$ in $(b)$ becomes
     \begin{align*}
         -\frac 1{2}\int_{T^2}\tr(JX)\tr(A_1J(\dot A_1)_0J+A_2J(\dot A_2)_0J)e_1^*\wedge e_2^*=-\frac 1{2}\tr(JX)\langle A,\dot A_0\rangle \ . 
     \end{align*}
     Finally, the term $(b)$ multiplied by $\frac{f'}{3}$ is equal to
     $$-\frac 1{2}f'\tr(JX)\langle A,\dot A_0\rangle \ .$$ Hence, it only remains to show that \begin{equation}\label{lastequation}
         \frac{f'}{3}\underbrace{\langle A(X^s\cdot), (\dot AJ+A\dot J)_0\rangle}_{(a)}-\frac {f'}{6}\underbrace{\langle AX^s+X^sA, (\dot AJ+A\dot J)_{\tr}\rangle}_{(c)}=0 \ .
     \end{equation}
     To do so, we will use a basis of $\Lsl(2,\R)$, namely we can write $\Lsl(2,\R)=\Span_{\R}(\xi_1,\xi_2,\xi_3\}$ where $$\xi_1=J_0,\qquad\xi_2=\begin{pmatrix}
     1 & 0 \\ 0 &-1
     \end{pmatrix},\qquad \xi_3=\begin{pmatrix}
     0 & 1 \\ 1 & 0
     \end{pmatrix} \ . $$
     The only symmetric matrices of this basis are $\xi_2$ and $\xi_3$, hence it is sufficient to prove equation (\ref{lastequation}) when $X^s=\xi_2$ and $X^s=\xi_3$, since all the elements are linear in $X\in\Lsl(2,\R)$. In both cases we use the description in coordinates $z=x+iy$ for $\Hyp$ and $w=u+iv$ for $\C$, of the Pick form $A$ and its variation $\dot A$, as we did at the end of Section 3.1. In particular we can do the computation in $(z,w)=(i,w)$ by $\SL(2,\R)$-invariance.\begin{itemize}
         \item[(i)] $X^s=\xi_2$.\newline In this case if $\{\frac\partial{\partial x_0},\frac\partial{\partial y_0}\}$ is a $g_{J_0}$-orthonormal basis of $\R^2$, then $X^s\cdot\frac\partial{\partial x_0}=\frac\partial{\partial x_0}$ and $X^s\cdot\frac\partial{\partial y_0}=-\frac\partial{\partial y_0}$, hence $A(X^s\cdot)=A_1\dx_{0}-A_2\dy_{0}$. 
         In particular, 
         \begin{align*}
             \tr(A_1(\dot A_1)_0J_0)&=2(-|w|^2\dot x-v\dot u+u\dot v) \\
             \tr(A_2(\dot A_2)_0J_0)&=2(-2|w|^2\dot x+u\dot v-v\dot u) \ . 
        \end{align*}
        Hence, \begin{align*}\frac{f'}{3}\langle A(X^s\cdot),(\dot AJ+A\dot J)_0\rangle&=\frac{f'}{3}\int_{T^2}\tr(A_1(\dot A_1)_0J-A_2(\dot A_2)_0J)\dx_0\wedge\dy_0 \\&=\frac 2{3}f'|w|^2\dot x \ .\end{align*}
     On the other hand, since \begin{align*}
         &\tr(\dot A_1)=-2(u\dot y+v\dot x)\qquad \tr(A_1X^s)=2u \\ &\tr(\dot A_2)=2(u\dot x-v\dot y) \qquad \tr(A_2X^s)=2v ,
     \end{align*}we get \begin{align*}
     -\frac{f'}{6}\langle AX^s+&X^sA, (\dot AJ+A\dot J)_{\tr}\rangle \\
     &= -\frac{f'}{6}\int_{T^2}\bigg(\tr(A_1X^s)\tr(\dot A_2)-\tr(A_2X^s)\tr(\dot A_1)\bigg)\dx_0\wedge\dy_0 \\&=-\frac 2{3}f'|w|^2\dot x \end{align*}and equation (\ref{lastequation}) is clearly satisfied.
     \item[(ii)] $X^s=\xi_3.$\newline With the same notation as in case $(i)$ we have $X^s\cdot\frac\partial{\partial x_0}=\frac\partial{\partial y_0}$ and $X^s\cdot\frac\partial{\partial y_0}=\frac\partial{\partial x_0}$, hence $A(X^s\cdot)=A_2\dx_{0}+A_1\dy_{0}$. Using that \begin{align*}
         &(\dot A_1)_0+(\dot A_2)_0J=\begin{pmatrix}
         -u\dot y-v\dot x & u\dot x-v\dot y \\ u\dot x-v\dot y & u\dot y+v\dot x
         \end{pmatrix}  \\ &\tr(A_1\big((\dot A_1)_0+(\dot A_2)_0J\big))=-2|w|^2\dot y
     \end{align*} the term $(a)$ multiplied by $\frac{f'}{3}$ is \begin{align*}
         \frac{f'}{3}\langle A(X^s\cdot), (\dot AJ+A\dot J)_0\rangle&=\frac{f'}{3}\int_{T^2}\tr(A_1\big((\dot A_1)_0+(\dot A_2)_0J\big))\dx_{0}\wedge\dy_{0} \\ &=-\frac 2{3}f'|w|^2\dot y \ .
     \end{align*}On the other hand, since \begin{align*}
         &\tr(\dot A_1)=-2(u\dot y+v\dot x)\qquad \tr(A_1X^s)=2v \\ &\tr(\dot A_2)=2(u\dot x-v\dot y) \qquad \tr(A_2X^s)=-2u ,
     \end{align*}the term $(c)$ multiplied by $-\frac{f'}{6}$  is \begin{align*}
        -\frac{f'}{6}\int_{T^2}\bigg(\tr(A_1X^s)\tr(\dot A_2)-\tr(A_2X^s)\tr(\dot A_1)\bigg)\dx_{0}\wedge\dy_{0}=\frac 2{3}f'|w|^2\dot y \ .
     \end{align*}
     \end{itemize}Thus, equation (\ref{lastequation}) is proved and the theorem as well. 
\end{proof}

\bibliographystyle{alpha}
\bibliographystyle{ieeetr}
\bibliography{biblio}

\end{document}